\documentclass[oneside,reqno,11pt,a4paper]{amsart}
\usepackage[T1]{fontenc}
\usepackage[margin=2.cm]{geometry}
\usepackage{amsmath,amsfonts,amsbsy,amssymb,amscd,amsthm,bm}
\usepackage[usenames,dvipsnames,svgnames]{xcolor}
\usepackage[utf8]{inputenc}
\usepackage[australian]{babel}
\usepackage[ruled]{algorithm2e}
\usepackage{graphicx}
\usepackage{svg}
\usepackage{caption}
\usepackage{subcaption}
\usepackage{doi}
\usepackage[foot]{amsaddr}
\usepackage{makecell}
\usepackage{hyperref}
\usepackage{acronym}
\usepackage{listings}
\usepackage{textgreek}
\usepackage{url}
\usepackage{color}
\usepackage{framed}
\usepackage{verbatim}
\usepackage{fancyvrb}
\usepackage{stmaryrd}
\usepackage{newtxtext}
\usepackage{newtxmath}
\usepackage{microtype} 
\usepackage[final]{pdfpages}
\usepackage{tikz}
\usetikzlibrary{arrows.meta}
\usetikzlibrary{patterns.meta}
\usetikzlibrary{patterns}
\usetikzlibrary{calc}
\usepackage{booktabs}
\usepackage{multirow}
\newcolumntype{C}[1]{>{\centering\arraybackslash}p{#1}}
\newcolumntype{M}[1]{>{\centering\arraybackslash}m{#1}}

\usepackage[backend=biber,
sorting=nty,
defernumbers=false,
maxbibnames=5,
style=numeric-comp,
isbn=false,
bibencoding=utf8,
safeinputenc,
url=false,
doi=false,
eprint=false,
giveninits=true]{biblatex}
\hypersetup{
breaklinks=true,
bookmarksopen=true,
pdftitle={Four-field mixed finite elements for incompressible nonlinear elasticity},    
pdfauthor={Santiago Badia, Wei Li and Ricardo Ruiz-Baier},     
colorlinks=true,       
linkcolor=blue,          
citecolor=blue,        
filecolor=black,      
urlcolor=blue           
}

\acrodef{pde}[PDE]{partial differential equation}
\acrodef{fe}[FE]{finite element}
\acrodef{fem}[FEM]{finite element method}
\acrodef{csfem}[CSFEM]{compatible-strain mixed finite element method}
\acrodef{ddfem}[DDFEM]{discontinuous displacement mixed finite element method}
\acrodef{bdm}[BDM]{Brezzi--Douglas--Marini}
\acrodef{rt}[RT]{Raviart--Thomas}
\acrodef{lbb}[LBB]{Ladyzhenskaya--Babu\v{s}ka--Brezzi}
\acrodef{stp}[STP]{Simo--Taylor--Pister}
\acrodef{ndtns}[NDTNS]{normal displacement tangential-normal stress continuous}
\acrodef{dof}[DoF]{degree of freedom}
\acrodefplural{dof}[DoFs]{degrees of freedom}

\graphicspath{{./}{./images}}
\newcommand{\fig}[1]{Fig.~\ref{#1}}
\newcommand{\tab}[1]{Tab.~\ref{#1}}
\newcommand{\sect}[1]{Sect.~\ref{#1}}

\newcommand{\norm}[1]{\left\lVert #1 \right\rVert}

\newcommand{\pairone}{$\overline{\text{P}}$0d1d1P1}
\newcommand{\pairtwo}{$\overline{\text{P}}$1d2d2P2}
\newcommand{\pairthree}{P1c1$\overline{\text{d}}$0$\overline{\text{P}}$0}
\newcommand{\pairfour}{P2c2$\overline{\text{d}}$1$\overline{\text{P}}$1}
\newcommand{\pairfive}{P2$\hat{\text{c}}$1$\overline{\text{d}}$0$\overline{\text{P}}$0}
\newcommand{\nedelec}{N{\'e}d{\'e}lec}

\newcommand{\twoprod}[2]{\langle\!\langle #1 \rangle\!\rangle_{#2}}

\newcommand{\bcurl}{\operatorname*{\mathbf{curl}}}
\newcommand{\bdiv}{\operatorname*{\mathbf{div}}}
\newcommand{\vdiv}{\operatorname*{div}}

\newcommand{\bb}{\boldsymbol{b}}
\newcommand{\bC}{\mathbf{C}}
\newcommand{\bI}{\mathbf{I}}
\newcommand{\bF}{\mathbf{F}}

\newcommand{\bu}{\boldsymbol{u}}
\newcommand{\bV}{\mathbf{V}}
\newcommand{\bv}{\boldsymbol{v}}
\newcommand{\bK}{\mathbf{K}}
\newcommand{\bk}{\boldsymbol{k}}
\newcommand{\bkappa}{\boldsymbol{\kappa}}
\newcommand{\bP}{\mathbf{P}}
\newcommand{\bpsi}{\boldsymbol{\psi}}
\newcommand{\bpi}{\boldsymbol{\pi}}
\newcommand{\bH}{\mathbf{H}}
\newcommand{\bL}{\mathbf{L}}

\newcommand{\bn}{\boldsymbol{n}}
\newcommand{\bbH}{\mathbb{H}}
\newcommand{\bbI}{\mathbb{I}}

\newcommand{\bbL}{\mathbb{L}}
\newcommand{\bbR}{\mathbb{R}}
\newcommand{\bnabla}{\boldsymbol{\nabla}}

\newcommand{\bDelta}{\boldsymbol{\Delta}}

\newcommand{\bS}{\mathbf{S}}
\newcommand{\bT}{\mathbf{T}}
\newcommand{\cT}{\mathcal{T}}
\newcommand{\bt}{\boldsymbol{t}}
\newcommand{\bQ}{\mathbf{Q}}
\newcommand{\bX}{\mathbf{X}}
\newcommand{\bx}{\mathbf{x}}
\newcommand{\bzero}{\mathbf{0}}
\newcommand{\bz}{\boldsymbol{z}}

\newcommand{\rL}{\mathrm{L}}
\newcommand{\rH}{\mathrm{H}}

\newcommand\btau{\boldsymbol{\tau}}
\newcommand\bsigma{\boldsymbol{\sigma}}
\newcommand\bgamma{\boldsymbol{\gamma}}
\newcommand\tr{\mathop{\mathbf{tr}}\nolimits}
\newcommand\dev{\mathrm{dev}}

\newcommand{\drawarrow}[5]{
  \draw[-Triangle, #4, thick, opacity=#5]
    ($(#1)!#3!(#2)$) -- ($($(#1)!#3!(#2)$)!{0.11/(1-#3)}!-90:(#2)$);
}

\newtheorem{lemma}{Lemma}[section]
\newtheorem{theorem}{Theorem}[section]

\newtheorem{corollary}{Corollary}

\begin{document}

\title[Four-field mixed finite elements]{Four-field mixed finite elements for incompressible nonlinear elasticity}
\author{Santiago Badia$^\dagger$}
\email{santiago.badia@monash.edu}
\author{Wei Li$^{\dagger,*}$}
\email{wei.li@monash.edu}
\author{Ricardo Ruiz-Baier$^\dagger$}
\email{ricardo.ruizbaier@monash.edu}
\address{$^\dagger$School of Mathematics, Monash University, Clayton, Victoria 3800, Australia.}
\address{$^*$Corresponding author.}

\date{\today}

\begin{abstract}
We present a stable finite element method for incompressible nonlinear elasticity based on a four-field mixed formulation involving the displacement, displacement gradient, first Piola--Kirchhoff stress and pressure. Unlike existing four-field mixed formulations, such as the compatible strain mixed finite element method (CSFEM), the proposed approach employs a discontinuous displacement field and requires no stabilisation in either 2D or 3D. A Newton--Raphson linearisation is derived and finite element pairs satisfying the relevant inf-sup conditions are identified. To recover accurate continuous displacement fields, an efficient postprocessing technique is further introduced. We establish the well-posedness of the linearised continuous problem together with a priori error estimates for the discrete formulation. Extensive numerical experiments in both 2D and 3D demonstrate optimal or even super convergence rates and enhanced robustness, particularly in 3D where CSFEM typically requires stabilisation.
\end{abstract}

\keywords{mixed finite element methods, nonlinear elasticity, incompressibility, large deformation}

\maketitle

\section{Introduction} \label{sec:intro}
Nonlinear modelling of incompressible solids arises in a wide range of engineering and scientific applications, including biomedical tissue mechanics~\cite{Rossi2014,henke2026electromechanical}, rubber elasticity~\cite{Treloar2005} and soft-material design~\cite{Liu2014}. Since analytical solutions to the underlying initial boundary-value problems are often infeasible, the \ac{fem} is a ubiquitous tool for approximating their solutions. Over the past decades, \ac{fem} has developed a rigorous mathematical foundation for nonlinear continuum mechanics~\cite{Zienkiewicz2005,Bonet2008}. In parallel, advances in solvers and high-performance computing have expanded the range of problems that \ac{fem} can handle, enabling simulations at extreme scales~\cite{Badia2016}.

In solid mechanics, variational principles are commonly categorised into single-field, two-field, three-field and four-field formulations. Many \acp{fem} are built on these principles, from displacement-based approaches to mixed and hybrid methods~\cite{Hughes2000,Zienkiewicz2005,Bonet2008}. 
The stationary potential energy principle yields a weak formulation in which displacement is the only independent variable. For (nearly) incompressible materials, it is often combined with a penalty approach that treats the material as slightly compressible through a large bulk modulus. After \ac{fe} discretisation, the stiffness matrix becomes increasingly ill-conditioned as the penalty parameter grows, yet a sufficiently large penalty is needed to enforce incompressibility~\cite{Holzapfel2001}. Consequently, displacement-based formulations can suffer severe volumetric locking and perform poorly for nearly or fully incompressible materials~\cite{Brezzi1991}.

To mitigate locking, multi-field variational principles introduce additional independent fields and lead to mixed \acp{fem}~\cite{Arnold1990}. A common two-field principle uses a Lagrange multiplier (hydrostatic pressure) to impose incompressibility, yielding a mixed displacement--pressure formulation. After discretisation, the resulting system is of saddle-point type, and well-posedness requires compatible \ac{fe} spaces for displacement and pressure that satisfy an inf-sup condition at the discrete level.

Several two-field \ac{fe} pairs satisfy the discrete inf-sup condition, including the Crouzeix--Raviart element~\cite{Crouzeix1973}, the Taylor--Hood element~\cite{Taylor1973} and the MINI element~\cite{Arnold1984}. A biorthogonal construction is proposed in~\cite{Lamichhane2009}, with pressure test functions biorthogonal to the trial functions, and a stress--displacement two-field formulation follows from the Hellinger--Reissner principle~\cite{Hellinger1907,Reissner1950} (see, e.g.,~\cite{Viebahn2018} for nearly incompressible elasticity). While many of these pairs are stable for linear mixed formulations, stability can deteriorate in large-deformation incompressible hyperelasticity; for instance, the MINI element has a reduced stability range in 2D nonlinear incompressible elasticity~\cite{Auricchio2013}.

Adding one more field can lead to more robust three-field principles. The \ac{stp} principle~\cite{Simo1985}, based on the Hu--Washizu framework~\cite{Hu1954,Washizu1955}, augments displacement and pressure with a third kinematic variable whose constraint is enforced through pressure. The enhanced strain mixed method~\cite{Simo1992} derived from this principle decomposes the deformation gradient into conforming and enhanced parts, extending incompatible mode ideas to the nonlinear regime. Another three-field method in~\cite{Farrell2021} uses the symmetric Kirchhoff stress together with displacement and pressure; it preserves stress symmetry but requires a heuristic choice of stabilisation constant in the lowest-order case.

Four-field mixed formulations for (nearly) incompressible solids have recently attracted increasing interest. Typically based on the Hu--Washizu principle~\cite{Hu1954,Washizu1955}, they treat displacement, displacement gradient, stress and pressure as independent fields. The \ac{csfem}~\cite{Angoshtari2016} yields robust methods for 2D~\cite{Shojaei2018} and 3D~\cite{Shojaei2019}, but requires different element pairs across dimensions and, in 3D, non-standard \acp{fe} and additional stabilisation. The \ac{ndtns} method~\cite{Fu2025}, based on the mass-conserving mixed stress formulation~\cite{Gopalakrishnan2019} and the Hu--Washizu functional, avoids inverting the constitutive law and allows flexibility in field regularity, but still relies on stabilisation and non-standard matrix-valued tangential-normal continuous \ac{fe} spaces.

In this work, we develop a \emph{stable} four-field mixed \ac{fe} formulation for incompressible solids. We analyse the linearised nonlinear elasticity problem to identify stable element pairs and study their error convergence. Unlike \ac{csfem}, the same stable pairs apply in both 2D and 3D. In contrast to \ac{csfem} and \ac{ndtns}, our approach uses standard \acp{fe} (e.g., Lagrange and \ac{bdm}), making implementation straightforward and avoiding additional stabilisation and parameter tuning. We present 2D and 3D numerical experiments, including comparisons with \ac{csfem}, showing that the method avoids common artefacts (locking and checkerboarding) while remaining robust and accurate.

The remainder of the article is organised as follows. \sect{sec:formu} introduces the notation and mathematical setting and presents a four-field formulation with its Newton--Raphson linearisation. \sect{sec:fe} describes the \ac{fe} discretisation, identifies stable pairs and introduces a discontinuous displacement correction. \sect{sec:na} establishes well-posedness and derives a priori error estimates. \sect{sec:exps} presents 2D and 3D numerical experiments, including comparisons with \acp{csfem}. \sect{sec:conclu} concludes and outlines future work.

\section{A mixed formulation for incompressible nonlinear elasticity} \label{sec:formu}
\subsection{Notation} \label{sec:formu:note}
Let $\Omega\subset\bbR^d$, $d\in\{2,3\}$, be an open, connected Lipschitz domain with piecewise smooth boundary $\Gamma \doteq \partial\Omega$, representing the referential (or material) configuration of a deformable body. Let $\bn$ denote the outward unit normal vector on $\Gamma$. The boundary is partitioned into the displacement and traction boundaries, $\Gamma_d$ and $\Gamma_t$, respectively, with   $\Gamma_d \cap \Gamma_t = \emptyset$ and $\Gamma_d \cup \Gamma_t = \Gamma$.

We consider a scalar field $p : \Omega \rightarrow \mathbb{R}$, a vector field $\bv : \Omega \rightarrow \mathbb{R}^d$ and a second-order tensor field $\bT : \Omega \rightarrow \mathbb{R}^{d \times d}$.
Let $\mathrm{L}^2(\Omega)$, $\mathbf{L}^2(\Omega)$ and $\mathbb{L}^2(\Omega)$ denote square-integrable scalar, vector and second-order tensor fields, respectively, with norms $\norm{\cdot}_{\mathrm{L}^2(\Omega)}$, $\norm{\cdot}_{\mathbf{L}^2(\Omega)}$ and $\norm{\cdot}_{\mathbb{L}^2(\Omega)}$. We then introduce the standard $\mathbf{grad}$- and $\bdiv$-conforming spaces
\begin{align*}
  \mathrm{H}^1(\Omega) &\doteq \{p \in \mathrm{L}^2(\Omega) : \nabla p \in \mathbf{L}^2(\Omega)\}, &
  \mathbf{H}^1(\Omega) &\doteq \{\bv \in \mathbf{L}^2(\Omega) : \bnabla \bv \in \mathbb{L}^2(\Omega)\},\\
  \mathbf{H}(\vdiv, \Omega) &\doteq \{\bv \in \mathbf{L}^2(\Omega) : \nabla \cdot \bv \in \mathrm{L}^2(\Omega)\}, &
  \mathbb{H}(\bdiv, \Omega) &\doteq \{\bT \in \mathbb{L}^2(\Omega) : \bnabla \cdot \bT \in \mathbf{L}^2(\Omega)\}.
\end{align*}
The $\bcurl$-conforming spaces depend on the spatial dimension:
\begin{align*}
  \mathbf{H}(\bcurl, \Omega) &\doteq 
    \begin{cases} 
      \{\bv \in \mathbf{L}^2(\Omega) : \bnabla \times \bv \in \mathrm{L}^2(\Omega)\}, & d=2, \\
      \{\bv \in \mathbf{L}^2(\Omega) : \bnabla \times \bv \in \mathbf{L}^2(\Omega)\}, & d=3,
    \end{cases} \\
  \mathbb{H}(\bcurl, \Omega) &\doteq 
    \begin{cases} 
      \{\bT \in \mathbb{L}^2(\Omega) : \bnabla \times \bT \in \mathbf{L}^2(\Omega)\}, & d=2, \\
      \{\bT \in \mathbb{L}^2(\Omega) : \bnabla \times \bT \in \mathbb{L}^2(\Omega)\}, & d=3.
    \end{cases}
\end{align*}
Each space is equipped with its natural norm. For instance, the norm in $\mathbb{H}(\bdiv, \Omega)$ is
$$ \norm{\bT}_{\mathbb{H}(\bdiv, \Omega)} = \left( \norm{\bT}^2_{\mathbb{L}^2(\Omega)} + \norm{\bnabla \cdot \bT}_{\mathbf{L}^2(\Omega)} \right)^{1/2}. $$

The $L^2$-inner product for scalar, vector and tensor fields is denoted uniformly by $\twoprod{\cdot,\cdot}{D}$. For example,
\begin{align*}
  \twoprod{p,q}{\Omega} = \int_{\Omega} p q \,\mathrm{d}V, \quad
  \twoprod{\bu,\bv}{\Gamma} = \int_{\Gamma} \bu \cdot \bv \,\mathrm{d}S, \quad 
  \twoprod{\bT,\bS}{\Omega} = \int_{\Omega} \bT : \bS \,\mathrm{d}V,
\end{align*}
where $p,q$ are scalar fields, $\bu,\bv$ are vector fields and $\bT,\bS$ are second-order tensor fields. The symbol ``$:$'' denotes the Frobenius inner product, i.e.,
$$ \bT : \bS = \sum_{i,j} \bT_{i,j} \bS_{i,j}. $$

\subsection{Incompressible nonlinear elasticity} \label{sec:formu:elas}
In this subsection, we review the essential ingredients of finite deformation. The displacement vector field
$\bu \doteq \bx - \bX$
maps each material point $\bX$ in the referential configuration to its spatial (or current) position $\bx$ in the deformed configuration. 
The displacement gradient in the material description is the second-order tensor
\begin{equation} \label{eq:graddisp}
  \bK \doteq \bnabla \bu.
\end{equation} 
Let $\bbI$ denote the second-order identity tensor; then the deformation gradient is
$$\bF \doteq \bnabla\bu + \bbI,$$
with the Jacobian determinant
$$J \doteq \det\bF = \det(\bnabla\bu + \bbI) > 0,$$
which measures the solid volume change during deformation.
An important strain measure in material coordinates is the right Cauchy--Green deformation tensor 
$$\bC \doteq \bF^{\tt t}\bF,$$
where the superscript $(\cdot)^{\tt t}$ denotes the transpose operator.

The constitutive behaviour of the material is encoded in a strain energy density function $\tilde{\Psi}$, representing the work per unit reference volume done by the stress in deforming the material system. It is expressed solely in terms of the deformation gradient.
For simplicity, we adopt a generic incompressible neo-Hookean material law, although the proposed method applies straightforwardly to other models. The strain energy density reads
\begin{equation} \label{eq:Psi}
  \tilde{\Psi}(\bF) = \frac{\mu}{2} (I_1 - d),
\end{equation}
where $\mu$ is the Lam{\'e} parameter, $I_1 = \tr\bC$ and $\tr(\cdot)$ denotes the trace operator.
Incompressible solids that maintain constant volume during deformation are characterised by the incompressibility constraint
$J = 1.$
To impose this constraint, we introduce a smooth function $C: \mathbb{R}^+ \rightarrow \mathbb{R}$ satisfying $C(J)=0$ if and only if $J=1$. As suggested by~\cite{Shojaei2018,Shojaei2019,Fu2025}, we employ two common choices:
\begin{subequations} \label{eq:C}
  \begin{align}
    C_1(J) &= J - 1, \label{eq:C1} \\
    C_2(J) &= \ln(J).\label{eq:C2}
  \end{align}
\end{subequations}
To derive a constitutive law for incompressible materials, we introduce the modified strain energy 
$$\Psi(\bF) = \tilde{\Psi}(\bF) - p C(J),$$
where $p \in \mathrm{L}^2(\Omega)$ is the hydrostatic pressure, i.e., the Lagrange multiplier enforcing incompressibility in the finite strain regime.
The first Piola--Kirchhoff stress tensor is then
\begin{equation}\label{eq:P}
  \bP = \frac{\partial\Psi}{\partial\bF} = \frac{\partial\tilde{\Psi}}{\partial \bF} - p \frac{\partial C}{\partial \bF} = \tilde{\bP}(\bK) - p \bQ(\bK).
\end{equation}
For neo-Hookean materials, it follows directly that
\begin{equation} \label{eq:tildeP}
  \tilde{\bP}(\bK) = \mu (\bK + \bbI),
\end{equation}
and
\begin{subequations} \label{eq:Qs}
  \begin{align} 
    \bQ_1(\bK) &= \frac{\partial C_1}{\partial \bK} = \det(\bK + \bbI)(\bK + \bbI)^{\tt -t}, \label{eq:Q1} \\
    \bQ_2(\bK) &= \frac{\partial C_2}{\partial \bK} = (\bK + \bbI)^{\tt -t}. \label{eq:Q2}
  \end{align}
\end{subequations}
In the inertial reference frame under static mechanical equilibrium, combining the balance of linear momentum, the constitutive relation, the incompressibility constraint and the boundary conditions yields the following strong form:
\begin{subequations}\label{eq:strong}
  \begin{align}
    \bnabla \cdot \bP & = -\rho_0 \bb &\text{ in } \Omega, \label{eq:momentum}\\ 
    \bP &= \tilde{\bP}(\bK) - p \bQ(\bK) &\text{ in } \Omega, \label{eq:constitutive}\\ 
    \bK &= \bnabla \bu   &\text{ in } \Omega,\label{eq:constitutive2} \\
    C(J) & = 0 & \text{in } \Omega, \label{eq:mass}
  \end{align}
\end{subequations}
where $\rho_0 > 0$ is the reference medium density and $\bb \in \mathbf{L}^2(\Omega)$ is the body force per unit undeformed volume. The above governing equations are supplemented with displacement and traction boundary conditions
\begin{equation} \label{eq:bc}
    \bu = \bar{\bu} \quad \text{on} \quad \Gamma_d, \qquad \text{and} \qquad \bP \bn = \bar{\bt} \quad \text{on} \quad \Gamma_t,
\end{equation}
where $\bar{\bu} \in \mathbf{H}^{1/2}(\Gamma_d)$ and $\bar{\bt} \in \mathbf{H}^{-1/2}(\Gamma_t)$ are the prescribed displacement and traction, respectively.

\subsection{A four-field mixed formulation}\label{sec:formu:fourfield}
We now test equations~\eqref{eq:momentum}-\eqref{eq:mass} against suitable functions. Unlike the approach in~\cite{Shojaei2018,Shojaei2019}, where integration by parts is applied to~\eqref{eq:momentum}, we integrate by parts in~\eqref{eq:constitutive2} to form a new four-field mixed weak formulation: find $(\bu, \bK, \bP, p) \in \mathbf{L}^2(\Omega)\times \mathbb{L}^2(\Omega) \times \mathbb{H}^{\bar{\bt}}(\bdiv, \Omega) \times \mathrm{L}^2(\Omega)$ such that
\begin{subequations}\label{eq:weak}
  \begin{align}
    \twoprod{\bv, \bnabla \cdot \bP}{\Omega} &= -\twoprod{\bv, \rho_0 \bb}{\Omega} &\forall \bv \in \mathbf{L}^2(\Omega),\label{eq:weak-1}\\
    \twoprod{\bgamma, \bP - \tilde{\bP}(\bK) + p \bQ(\bK)}{\Omega} &= 0 &\forall \bgamma \in \mathbb{L}^2(\Omega),\label{eq:weak-2}\\
    \twoprod{\btau, \bK}{\Omega} + \twoprod{\bnabla \cdot \btau, \bu}{\Omega} & = \twoprod{\btau \bn, \bar{\bu}}{\Gamma_d} &\forall \btau \in \mathbb{H}^{\bzero}(\bdiv,\Omega),\label{eq:weak-3}\\
    \twoprod{q, C(J)}{\Omega} &= 0 &\forall q\in \mathrm{L}^2(\Omega).\label{eq:weak-4}
  \end{align}
\end{subequations}
Here, $\mathbb{H}^{\bzero}(\bdiv, \Omega) \doteq \{ \bT \in \mathbb{H}(\bdiv, \Omega) : \bT \bn = \bzero \}$ and $\mathbb{H}^{\bar{\bt}}(\bdiv, \Omega) \doteq \{ \bT \in \mathbb{H}(\bdiv, \Omega) : \bT \bn = \bar{\bt} \}$.
This four-field formulation ensures that the displacement gradient, stress and pressure are not postprocessed from the displacement. In contrast to the formulations of~\cite{Shojaei2018,Shojaei2019}, the displacement boundary condition is imposed weakly via~\eqref{eq:weak-3}, whereas the traction boundary condition is enforced strongly through the space $ \mathbb{H}^{\bar{\bt}}(\bdiv, \Omega)$.
 
\subsection{Linearisation} \label{sec:formu:lin}
We now apply a Newton--Raphson linearisation to~\eqref{eq:weak}, starting from an initial guess. Using the identities for the derivative of the inverse transpose and the determinant of a tensor, we obtain 
\begin{equation*}
  \frac{d}{d\bF}(\bF^{-\tt t})|_{\bu} = - \bF^{-\tt t}(\nabla\bu)^{\tt t}\bF^{-\tt t}, \quad 
  \frac{d}{d\bF}(\det\bF)|_{\bu} = J(\bF^{-\tt t}:\nabla \bu)\bbI.
\end{equation*}
We initialise the iteration from the stress-free, motionless state 
\begin{equation*}
  (\bu^{k=0},\bK^{k=0},\bP^{k=0},p^{k=0}) = (\bzero,\bzero,\bzero,0).
\end{equation*}
For simplicity, we set the traction boundary data $\bar{\bt}=\bzero$; the linearised problem reduces to finding $(\bu,\bk,\bsigma,p)\in \mathbf{L}^2(\Omega)\times \mathbb{L}^2(\Omega)\times \mathbb{H}^{\bzero}(\bdiv,\Omega)\times \mathrm{L}^2(\Omega)$ such that 
\begin{equation}\label{eq:weak-lin}
\begin{alignedat}{4}
& &\twoprod{\bv, \bnabla \cdot \bsigma}{\Omega}& & &= -\twoprod{\bv, \rho_0 \bb}{\Omega} &\quad \forall \bv \in \mathbf{L}^2(\Omega),\\
&-\mu \twoprod{\bgamma, \bk}{\Omega} & +\,\twoprod{\bgamma, \bsigma}{\Omega}& +\,\twoprod{\tr(\bgamma), p}{\Omega}& &= \twoprod{\tr(\bgamma), \mu}{\Omega} &\quad \forall \bgamma \in \mathbb{L}^2(\Omega),\\
\twoprod{\bnabla\cdot\btau, \bu}{\Omega}& +\,\twoprod{\btau, \bk}{\Omega}& & & &= \twoprod{\btau\bn, \bar{\bu}}{\Gamma_d} &\quad \forall \btau \in \mathbb{H}^{\bzero}(\bdiv,\Omega),\\
&\twoprod{q, \tr(\bk)}{\Omega}& & & &= 0 &\quad \forall q \in \mathrm{L}^2(\Omega).
\end{alignedat}
\end{equation}

\section{Finite element discretisation} \label{sec:fe}
Let $\cT_h$ be a shape-regular partition of $\overline{\Omega}$ into triangles in 2D or tetrahedra in 3D. The mesh size is defined as $h \doteq \max\left\{h_K:\ K\in \cT_h \right\}$, where $K$ is an element from $\cT_h$ with diameter $h_K$.
Given an integer $k > 0$, for each $K\in \cT_h$, we denote by $\mathrm{P}_k(K)$ the space of polynomials on $K$ of degree at most $k$ and by $\mathrm{P}_k(\cT_h)$ its global counterpart (similarly for the other discrete spaces below).
We define $\bP_k(K) \doteq [\mathrm{P}_k(K)]^d$ as the vector-valued polynomial space on $K$. The \ac{bdm} \ac{fe} space on $K$ is given by $\mathbf{BDM}_k(K) \doteq \bP_k(K)$. We then introduce the discrete spaces:
\begin{align*}
  \mathrm{H}_{k,h} &\doteq \mathrm{P}_k(\cT_h) \cap C^0(\Omega) 
  \subset \mathrm{H}^1(\Omega),&
  \mathbb{H}_{k,h} &\doteq [\mathbf{BDM}_k(\cT_h)]^d 
  \subset \mathbb{H}(\bdiv, \Omega),\\
  \mathbf{H}_{k,h} &\doteq \mathbf{P}_k(\cT_h) \cap C^0(\Omega) 
  \subset \mathbf{H}^1(\Omega),&
  \overline{\mathbf{H}}_{k-1,h} &\doteq \mathbf{P}_{k-1}(\cT_h) 
  \subset \mathbf{L}^2(\Omega).
\end{align*}
Finally, we define the $\bdiv$-conforming subspaces with prescribed normal traces on $\Gamma_t$:
\begin{equation*}
  \mathbb{H}_{k,h}^{\bzero} \doteq  \{ \bT_h \in \mathbb{H}_{k,h} : \bT_h \bn = \bzero \text{ on } \Gamma_t \}, \quad \mathbb{H}_{k,h}^{\bar{\bt}} \doteq  \{ \bT_h \in \mathbb{H}_{k,h} : \bT_h \bn = \bar{\bt} \text{ on } \Gamma_t \},
\end{equation*}
and the $\mathbf{grad}$-conforming subspaces with prescribed displacements on $\Gamma_d$:
\begin{equation*}
  \mathbf{H}^{\bzero}_{k,h} \doteq   \{ \bv_h \in \mathbf{H}_{k,h} : \bv_h = \bzero \text{ on } \Gamma_d \}, \quad
  \mathbf{H}^{\bar{\bu}}_{k,h} \doteq   \{ \bv_h \in \mathbf{H}_{k,h} : \bv_h = \bar{\bu} \text{ on } \Gamma_d \}.
\end{equation*}

\subsection{A mixed finite element method for incompressible nonlinear elasticity} \label{sec:fe:mixed_fe}
Based on the continuous variational formulation~\eqref{eq:weak}, we can derive its discrete counterpart. After discretisation, appropriate \ac{fe} spaces must be chosen to satisfy the inf-sup conditions; such combinations are referred to as \emph{stable \ac{fe} pairs}.
With the numerical analysis done in \sect{sec:na:discrete}, we arrive at the following discrete formulation of~\eqref{eq:weak}: find $(\bu_h, \bK_h, \bP_h, p_h) \in \overline{\mathbf{H}}_{k-1,h} \times \mathbb{H}_{k,h} \times \mathbb{H}_{k,h}^{\bar{\bt}} \times \mathrm{H}_{k,h}$ such that
\begin{equation}\label{eq:fe}
\begin{aligned}
  \twoprod{\bv_h, \bnabla \cdot \bP_h}{\Omega} &= -\twoprod{\bv_h, \rho_0 \bb}{\Omega} &\forall \bv_h \in \overline{\mathbf{H}}_{k-1,h},\\
  \twoprod{\bgamma_h, \bP_h - \tilde{\bP}(\bK_h) + p_h \bQ(\bK_h)}{\Omega} &= 0 &\forall \bgamma_h \in \mathbb{H}_{k,h},\\
  \twoprod{\btau_h, \bK_h}{\Omega} + \twoprod{\bnabla \cdot \btau_h, \bu_h}{\Omega} & = \twoprod{\btau_h \bn, \bar{\bu}}{\Gamma_d} &\forall \btau_h \in \mathbb{H}_{k,h}^{\bzero},\\
  \twoprod{q_h, C(J)}{\Omega} &= 0 &\forall q_h \in \mathrm{H}_{k,h}.
\end{aligned}
\end{equation}

\subsection{Finite elements} \label{sec:fe:fe}
For notational convenience, we use a two-character symbol to denote both the type and the polynomial order of each \ac{fe} space. The symbols used throughout this paper are summarised in \tab{tab:fesymbols}. 
Note that we also include the special \nedelec{} elements $\hat{\text{c}}1$, which combine six second-kind first-order and nine first-kind third-order \nedelec{} shape functions. These elements are essential for the 3D \ac{csfem}~\cite{Shojaei2019}.

\begin{table}[hbt!]
  \centering
  \begin{tabular}{clll}
    \toprule
    Symbol & Description & Conformity & Order range\\
    \midrule
    Pi & Continuous Lagrange elements of order i. & $\mathrm{H}^1(\Omega)$ & $\text{i}\in(1, 2)$\\
    $\overline{\text{P}}$i & Discontinuous Lagrange elements of order i. & L$^2(\Omega)$ & $\text{i}\in(0, 1)$\\
    ci & Second-kind \nedelec{} elements of order i. & $\mathbf{H}(\bcurl,\Omega)$ & $\text{i}\in(0, 1)$\\
    $\hat{\text{c}}1$ & Special \nedelec{} elements proposed in~\cite{Shojaei2019}. & $\mathbf{H}(\bcurl,\Omega)$ & N/A\\
    di & \ac{bdm} elements of order i. & $\mathbf{H}(\vdiv,\Omega)$ & $\text{i}\in(1, 2)$\\
    $\overline{\text{d}}$i & Raviart-Thomas elements of order i. & $\mathbf{H}(\vdiv,\Omega)$ & $\text{i}\in(0, 1)$\\
    \bottomrule
  \end{tabular}
  \caption{Notation for the \ac{fe} spaces considered in this work.}
  \label{tab:fesymbols}
\end{table}

Since our method is based on a four-field formulation, we encode each mixed \ac{fe} combination by eight characters, with each two-character segment specifying the \ac{fe} space for one field. For example, \pairtwo{} uses a discontinuous vector-valued Lagrange space of order 1 ($\overline{\mathbf{H}}_{1,h}$) for $\bu_h$, a second-order tensor-valued \ac{bdm} space ($\mathbb{H}_{2,h}$) for $\bK_h$ and $\bP_h$, and a continuous scalar-valued Lagrange space of order 2 ($\mathrm{H}_{2,h}$) for $p_h$.
Note that vector-valued Lagrange spaces are constructed by stacking $d$ copies of the corresponding scalar Lagrange space, while tensor-valued \ac{fe} spaces of a given type are built from the corresponding vector-valued spaces by treating each row or column of the tensor as a vector field.

As shown in \sect{sec:na:discrete}, the stable \ac{fe} pairs in 2D and 3D are \pairone{} and \pairtwo{}. Higher-order pairs may also be stable but are computationally expensive, so we focus on these two.
Figure~\ref{fig:cell2ddof} presents the element-wise \acp{dof} of the two stable pairs in 2D. The low-order pair \pairone{} has 29 \acp{dof} per cell, while the high-order pair \pairtwo{} has 60 \acp{dof}. In 3D, these counts increase to 79 and 202 \acp{dof}, respectively.
In the numerical experiments, we compare these pairs with the \ac{csfem} pairs \pairthree{} (25 \acp{dof}) and \pairfour{} (55 \acp{dof}) in 2D and \pairfive{} (88 \acp{dof}) in 3D. The low-order pairs (\pairone{} and \pairthree{}) and high-order pairs (\pairtwo{} and \pairfour{}) have comparable \ac{dof} counts per element in 2D, allowing for direct comparison. In 3D, however, \pairtwo{} has significantly more \acp{dof} than \pairfive{}, so a more balanced comparison is between \pairone{} and \pairfive{}.

\begin{figure}[hbt!]
  \centering
  \begin{tabular}{M{1.6cm}M{2.6cm}M{2.6cm}M{2.6cm}M{2.6cm}M{0.6cm}}
    Pair & $\bu_h$ & $\bK_h$ & $\bP_h$ & $p_h$ & \acp{dof} \\[1ex]
    \pairone{} & 
    \begin{tikzpicture}[scale=1.2]
      \draw[thick] (-1, 0) coordinate (A) -- (1, 0) coordinate (B) -- (0, {sqrt(3)}) coordinate (C) -- cycle;
      \fill[magenta] ($($(A)!0.41!(B)$)!0.4!(C)$) circle [radius=0.08];
      \fill[magenta] ($($(A)!0.59!(B)$)!0.4!(C)$) circle [radius=0.08];

      \fill[white,opacity=0] (C) circle [radius=0.08];
      \drawarrow{A}{B}{1/2}{white}{0}
    \end{tikzpicture} & 
  \begin{tikzpicture}[scale=1.2]
    \draw[thick] (-1, 0) coordinate (A) -- (1, 0) coordinate (B) -- (0, {sqrt(3)}) coordinate (C) -- cycle;
    \drawarrow{A}{B}{03/16}{blue}{1}
    \drawarrow{A}{B}{05/16}{blue}{1}
    \drawarrow{A}{B}{11/16}{blue}{1}
    \drawarrow{A}{B}{13/16}{blue}{1}
    \drawarrow{C}{A}{03/16}{blue}{1}
    \drawarrow{C}{A}{05/16}{blue}{1}
    \drawarrow{C}{A}{11/16}{blue}{1}
    \drawarrow{C}{A}{13/16}{blue}{1}
    \drawarrow{B}{C}{03/16}{blue}{1}
    \drawarrow{B}{C}{05/16}{blue}{1}
    \drawarrow{B}{C}{11/16}{blue}{1}
    \drawarrow{B}{C}{13/16}{blue}{1}

    \fill[white,opacity=0] (C) circle [radius=0.08];
  \end{tikzpicture} & 
  \begin{tikzpicture}[scale=1.2]
    \draw[thick] (-1, 0) coordinate (A) -- (1, 0) coordinate (B) -- (0, {sqrt(3)}) coordinate (C) -- cycle;
    \drawarrow{A}{B}{003/16}{cyan}{1}
    \drawarrow{A}{B}{05/16}{cyan}{1}
    \drawarrow{A}{B}{11/16}{cyan}{1}
    \drawarrow{A}{B}{13/16}{cyan}{1}
    \drawarrow{C}{A}{03/16}{cyan}{1}
    \drawarrow{C}{A}{05/16}{cyan}{1}
    \drawarrow{C}{A}{11/16}{cyan}{1}
    \drawarrow{C}{A}{13/16}{cyan}{1}
    \drawarrow{B}{C}{03/16}{cyan}{1}
    \drawarrow{B}{C}{05/16}{cyan}{1}
    \drawarrow{B}{C}{11/16}{cyan}{1}
    \drawarrow{B}{C}{13/16}{cyan}{1}

    \fill[white,opacity=0] (C) circle [radius=0.08];
  \end{tikzpicture} & 
  \begin{tikzpicture}[scale=1.2]
    \draw[thick] (-1, 0) coordinate (A) -- (1, 0) coordinate (B) -- (0, {sqrt(3)}) coordinate (C) -- cycle;
    \fill[orange] (A) circle [radius=0.08];
    \fill[orange] (B) circle [radius=0.08];
    \fill[orange] (C) circle [radius=0.08];

    \drawarrow{A}{B}{1/2}{white}{0}
  \end{tikzpicture} & 
  29 \\
  \pairtwo{} &
  \begin{tikzpicture}[scale=1.2]
    \draw[thick] (-1, 0) coordinate (A) -- (1, 0) coordinate (B) -- (0, {sqrt(3)}) coordinate (C) -- cycle;
    \fill[magenta] ($($(A)!0.4!(B)$)!0.5!(C)$) circle [radius=0.08];
    \fill[magenta] ($($(A)!0.4!(C)$)!0.5!(B)$) circle [radius=0.08];
    \fill[magenta] ($($(C)!0.4!(B)$)!0.5!(A)$) circle [radius=0.08];
    \fill[magenta] ($($(A)!0.6!(B)$)!0.5!(C)$) circle [radius=0.08];
    \fill[magenta] ($($(A)!0.6!(C)$)!0.5!(B)$) circle [radius=0.08];
    \fill[magenta] ($($(C)!0.6!(B)$)!0.5!(A)$) circle [radius=0.08];

    \drawarrow{A}{B}{1/2}{white}{0}
    \fill[white,opacity=0] (C) circle [radius=0.08];
  \end{tikzpicture} &
  \begin{tikzpicture}[scale=1.2]
    \draw[thick] (-1, 0) coordinate (A) -- (1, 0) coordinate (B) -- (0, {sqrt(3)}) coordinate (C) -- cycle;
    \drawarrow{A}{B}{2/16}{blue}{1}
    \drawarrow{A}{B}{4/16}{blue}{1}
    \drawarrow{A}{B}{7/16}{blue}{1}
    \drawarrow{A}{B}{9/16}{blue}{1}
    \drawarrow{A}{B}{12/16}{blue}{1}
    \drawarrow{A}{B}{14/16}{blue}{1}
    \drawarrow{C}{A}{2/16}{blue}{1}
    \drawarrow{C}{A}{4/16}{blue}{1}
    \drawarrow{C}{A}{7/16}{blue}{1}
    \drawarrow{C}{A}{9/16}{blue}{1}
    \drawarrow{C}{A}{12/16}{blue}{1}
    \drawarrow{C}{A}{14/16}{blue}{1}
    \drawarrow{B}{C}{2/16}{blue}{1}
    \drawarrow{B}{C}{4/16}{blue}{1}
    \drawarrow{B}{C}{7/16}{blue}{1}
    \drawarrow{B}{C}{9/16}{blue}{1}
    \drawarrow{B}{C}{12/16}{blue}{1}
    \drawarrow{B}{C}{14/16}{blue}{1}

    \fill[blue] ($($(A)!0.4!(B)$)!0.5!(C)$) circle [radius=0.08];
    \fill[blue] ($($(A)!0.4!(C)$)!0.5!(B)$) circle [radius=0.08];
    \fill[blue] ($($(C)!0.4!(B)$)!0.5!(A)$) circle [radius=0.08];
    \fill[blue] ($($(A)!0.6!(B)$)!0.5!(C)$) circle [radius=0.08];
    \fill[blue] ($($(A)!0.6!(C)$)!0.5!(B)$) circle [radius=0.08];
    \fill[blue] ($($(C)!0.6!(B)$)!0.5!(A)$) circle [radius=0.08];

    \fill[white,opacity=0] (C) circle [radius=0.08];
  \end{tikzpicture} & 
  \begin{tikzpicture}[scale=1.2]
    \draw[thick] (-1, 0) coordinate (A) -- (1, 0) coordinate (B) -- (0, {sqrt(3)}) coordinate (C) -- cycle;
    \drawarrow{A}{B}{2/16}{cyan}{1}
    \drawarrow{A}{B}{4/16}{cyan}{1}
    \drawarrow{A}{B}{7/16}{cyan}{1}
    \drawarrow{A}{B}{9/16}{cyan}{1}
    \drawarrow{A}{B}{12/16}{cyan}{1}
    \drawarrow{A}{B}{14/16}{cyan}{1}
    \drawarrow{C}{A}{2/16}{cyan}{1}
    \drawarrow{C}{A}{4/16}{cyan}{1}
    \drawarrow{C}{A}{7/16}{cyan}{1}
    \drawarrow{C}{A}{9/16}{cyan}{1}
    \drawarrow{C}{A}{12/16}{cyan}{1}
    \drawarrow{C}{A}{14/16}{cyan}{1}
    \drawarrow{B}{C}{2/16}{cyan}{1}
    \drawarrow{B}{C}{4/16}{cyan}{1}
    \drawarrow{B}{C}{7/16}{cyan}{1}
    \drawarrow{B}{C}{9/16}{cyan}{1}
    \drawarrow{B}{C}{12/16}{cyan}{1}
    \drawarrow{B}{C}{14/16}{cyan}{1}

    \fill[cyan] ($($(A)!0.4!(B)$)!0.5!(C)$) circle [radius=0.08];
    \fill[cyan] ($($(A)!0.4!(C)$)!0.5!(B)$) circle [radius=0.08];
    \fill[cyan] ($($(C)!0.4!(B)$)!0.5!(A)$) circle [radius=0.08];
    \fill[cyan] ($($(A)!0.6!(B)$)!0.5!(C)$) circle [radius=0.08];
    \fill[cyan] ($($(A)!0.6!(C)$)!0.5!(B)$) circle [radius=0.08];
    \fill[cyan] ($($(C)!0.6!(B)$)!0.5!(A)$) circle [radius=0.08];

    \fill[white,opacity=0] (C) circle [radius=0.08];
  \end{tikzpicture} & 
  \begin{tikzpicture}[scale=1.2]
    \draw[thick] (-1, 0) coordinate (A) -- (1, 0) coordinate (B) -- (0, {sqrt(3)}) coordinate (C) -- cycle;
    \fill[orange] (A) circle [radius=0.08];
    \fill[orange] (B) circle [radius=0.08];
    \fill[orange] (C) circle [radius=0.08];
    \fill[orange] ($(A)!0.5!(B)$) circle [radius=0.08];
    \fill[orange] ($(A)!0.5!(C)$) circle [radius=0.08];
    \fill[orange] ($(B)!0.5!(C)$) circle [radius=0.08];

    \drawarrow{A}{B}{1/2}{white}{0}
  \end{tikzpicture} & 
  60
  \end{tabular}
  \caption{Illustration of element-wise \acp{dof} for the proposed stable pairs in 2D.}
  \label{fig:cell2ddof}
\end{figure}
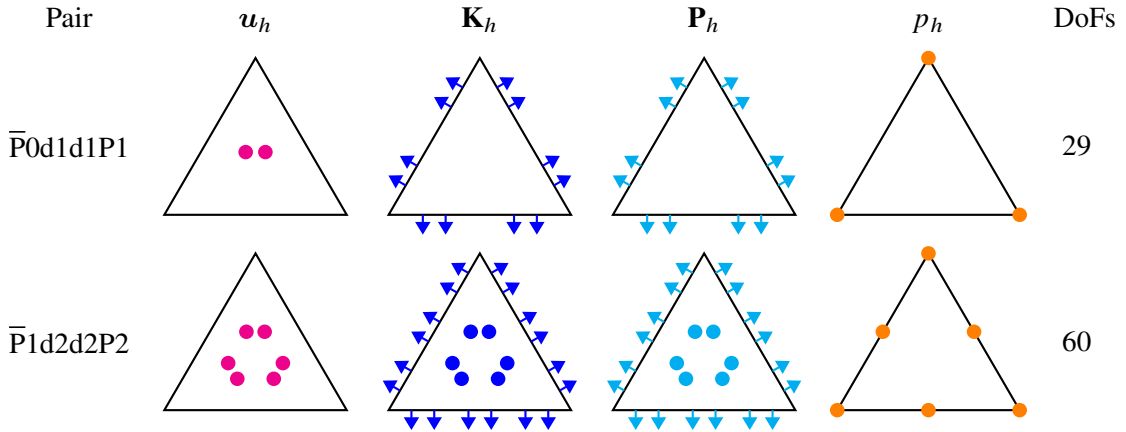

\subsection{Discontinuous displacement correction} \label{sec:fe:ddcorr}
In the proposed method, the displacement field is approximated in discontinuous Lagrange spaces. Therefore, the deformed configuration may exhibit inter-element discontinuities, especially on coarse meshes and for the pair \pairone{}. These discontinuities manifest as gaps or overlaps between neighbouring elements in the visualisation of the deformed geometry. 
Moreover, since displacement boundary conditions are imposed weakly, the discrete displacement field may not satisfy the prescribed boundary data pointwise, and noticeable deviations can occur in the vicinity of displacement boundaries.
Although the inflation experiments in \sect{sec:exps:inf} confirm the expected convergence of the discrete displacement, a postprocessing procedure is introduced to recover a globally continuous displacement field that satisfies the displacement boundary conditions strongly and improves the approximation in an energy-consistent sense.
The proposed strategy is reliable and computationally inexpensive, as it involves the solution of a symmetric positive-definite problem posed on a higher-order conforming space.

The correction procedure is conceptually related to Stenberg-type postprocessing techniques for mixed \acp{fem}~\cite{Stenberg1991}, where improved displacement fields are reconstructed from discrete fluxes. The main difference is that our approach is global rather than element-wise, operating over the entire mesh. Specifically, starting from the \ac{fe} solution $(\bu_h, \bK_h, \bP_h, p_h) \in \overline{\mathbf{H}}_{k-1,h} \times \mathbb{H}_{k,h} \times \mathbb{H}_{k,h}^{\bar{\bt}} \times \mathrm{H}_{k,h}$, the continuous relation~\eqref{eq:graddisp} is enforced in a weak sense to reconstruct a conforming displacement. The reconstruction problem reads: find $\tilde{\bu}_h \in \bH^{\bar{\bu}}_{k+1,h}$ such that for all $\bv_h \in \bH^{\bzero}_{k+1,h}$,
\begin{equation*}
  \twoprod{\bnabla \tilde{\bu}_h, \bnabla \bv_h}{\Omega} = \twoprod{\bK_h, \bnabla \bv_h}{\Omega}.
\end{equation*}
The resulting \ac{fe} tuple $(\tilde{\bu}_h, \bK_h, \bP_h, p_h) \in \bH^{\bar{\bu}}_{k+1,h} \times \mathbb{H}_{k,h} \times \mathbb{H}_{k,h}^{\bar{\bt}} \times \mathrm{H}_{k,h}$ features a globally continuous displacement field and satisfies both the displacement and traction boundary conditions in a strong sense. 

It is natural to use continuous Lagrange spaces for $\tilde{\bu}_h$. Since the relation~\eqref{eq:graddisp} involves a gradient acting on the displacement, the displacement correction space can be chosen one polynomial order higher than that of the displacement gradient \ac{fe} space. Accordingly, we use, e.g., P2 elements to correct \pairone{} displacements.
This postprocessing step is computationally much cheaper than the nonlinear solvers: it requires solving only a single, much smaller, symmetric positive-definite system.

In the numerical experiments, we append the tag ``(corr)'' after any pair whose displacement field has been postprocessed. For example, ``\pairone{}(corr)'' denotes the corrected \ac{fe} solution $(\tilde{\bu}_h, \bK_h, \bP_h, p_h)$, where $\tilde{\bu}_h$ is reconstructed from $\bK_h$ while the remaining fields are taken from the original \pairone{} \ac{fe} solution.

\section{Continuous and discrete analysis of the linearised problem} \label{sec:na}
\subsection{Well-posedness analysis of the linearised problem} \label{sec:na:lin}
Reordering the unknowns and equations, we write the linearised problem~\eqref{eq:weak-lin} as follows: find $(\bk,(\bsigma, p), \bu)\in \mathbb{L}^2(\Omega)\times [\mathbb{H}^{\bzero}(\bdiv,\Omega) \times \mathrm{L}^2(\Omega)] \times \bL^2(\Omega)$ such that 
\begin{equation}\label{eq:weak-lin3}
\begin{alignedat}{4}
  \mu \twoprod{\bk,\bgamma}{\Omega}& \,-\twoprod{\bsigma,\bgamma}{\Omega}& -\,\twoprod{p, \tr(\bgamma)}{\Omega}& & &= \twoprod{\mu, \tr(\bgamma)}{\Omega} &\quad \forall \bgamma \in \mathbb{L}^2(\Omega),\\
  -\twoprod{\bk, \btau}{\Omega}& & & -\twoprod{\bu, \bnabla\cdot\btau}{\Omega}& &= -\twoprod{\bar{\bu}, \btau\bn}{\Gamma_d} &\quad \forall \btau \in \mathbb{H}^{\bzero}(\bdiv,\Omega),\\
  -\twoprod{\tr(\bk), q}{\Omega}& & & & &= 0 &\quad \forall q \in \mathrm{L}^2(\Omega),\\
  &-\twoprod{\bnabla \cdot \bsigma, \bv}{\Omega}& & & &= \twoprod{\rho_0 \bb, \bv}{\Omega} &\quad \forall \bv \in \mathbf{L}^2(\Omega).
\end{alignedat}  
\end{equation}
Defining the operators
\begin{align*}
  A&: \mathbb{L}^2(\Omega)\to \mathbb{L}^2(\Omega)', \quad \langle A(\bk), \bgamma \rangle \doteq \mu \twoprod{\bk,\bgamma}{\Omega},\\
  B_1&: \mathbb{L}^2(\Omega) \to [\mathbb{H}^{\bzero}(\bdiv,\Omega) \times \mathrm{L}^2(\Omega)]', \quad \langle B_1 (\bgamma),(\btau,q) \rangle \doteq -\twoprod{\bgamma, \btau}{\Omega} - \twoprod{\tr(\bgamma), q}{\Omega},\\
  B_2&: [\mathbb{H}^{\bzero}(\bdiv,\Omega) \times \mathrm{L}^2(\Omega)] \to \bL^2(\Omega)', \quad \langle B_2(\btau,q), \bv\rangle \doteq - \twoprod{\nabla \cdot \btau, \bv}{\Omega},\\
  F_1&: \mathbb{L}^2(\Omega) \to \mathbb{R}, \quad \langle F_1, \bgamma \rangle \doteq \twoprod{\mu,\tr(\bgamma)}{\Omega},\\
  F_2&: [\mathbb{H}^{\bzero}(\bdiv,\Omega) \times \mathrm{L}^2(\Omega)] \to \mathbb{R}, \quad \langle F_2, (\btau,q)\rangle \doteq -\twoprod{\bar{\bu}, \btau\bn}{\Gamma_d},\\
  F_3&: \bL^2(\Omega) \to \mathbb{R}, \quad \langle F_3, \bv\rangle \doteq  \twoprod{\rho_0 \bb, \bv}{\Omega}, 
\end{align*}
problem~\eqref{eq:weak-lin3} can be compactly written as
\begin{equation*}
  \begin{pmatrix}
    A & B_1' & 0 \\ B_1 & 0 & B_2' \\ 0 & B_2 & 0 
  \end{pmatrix}\begin{pmatrix}
    \bk \\ (\bsigma,p) \\ \bu
  \end{pmatrix} = \begin{pmatrix}
    F_1 \\ F_2 \\ F_3
  \end{pmatrix} \qquad \text{in } (\mathbb{L}^2(\Omega)\times [\mathbb{H}^{\bzero}(\bdiv,\Omega) \times \mathrm{L}^2(\Omega)]\times\bL^2(\Omega))'.
\end{equation*} 
The abstract theory for twofold saddle-point problems is stated in~\cite[Theorem 2.2]{Gatica2003} (see also~\cite{Howell2011}).

\begin{theorem}\label{th:abstract}
Denote $\bV_2 \doteq \ker(B_2)$. Assume that the following conditions hold:
\begin{itemize}
  \item The operator $A$ is linear, bounded and coercive in $\mathbb{L}^2(\Omega)$;
  \item The linear operator $B_1$ satisfies an inf-sup condition on the kernel $\bV_2$;
  \item The linear operator $B_2$ satisfies an inf-sup condition.
\end{itemize}
Then, for any given linear functionals $F_1$, $F_2$ and $F_3$, there exists a unique solution $(\bk,(\bsigma,p),\bu)\in \mathbb{L}^2(\Omega)\times [\mathbb{H}^{\bzero}(\bdiv,\Omega) \times \mathrm{L}^2(\Omega)]\times\bL^2(\Omega)$ to \eqref{eq:weak-lin3}. Furthermore, there exists a constant $C>0$ depending only on the continuity, coercivity and inf-sup constants, such that 
$$ \norm{(\bk,(\bsigma,p),\bu)} \leq C (\norm{F_1} + \norm{F_2} + \norm{F_3}). $$
\end{theorem}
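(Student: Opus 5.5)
The plan is to reduce the twofold saddle-point system to two nested applications of the classical Babu\v{s}ka--Brezzi theory. Write $X \doteq \mathbb{L}^2(\Omega)$, $M \doteq \mathbb{H}^{\bzero}(\bdiv,\Omega)\times \mathrm{L}^2(\Omega)$ and $Q \doteq \bL^2(\Omega)$. Group the unknowns as $((\bk,(\bsigma,p)),\bu)$ and view the problem as a single saddle-point problem on $(X\times M)\times Q$. The upper-left block is
\begin{equation*}
\mathcal{A} \doteq \begin{pmatrix} A & B_1' \\ B_1 & 0\end{pmatrix}
\end{equation*}
and the constraint operator is $\mathcal{B}(\bgamma,(\btau,q)) \doteq B_2(\btau,q)$. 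Brezzi's theorem then applies on the outer level provided two things hold. First, $\mathcal{B}$ must satisfy an inf-sup condition on $X\times M$. Second, $\mathcal{A}$ must be invertible (in the inf-sup sense) on $\ker\mathcal{B} = X\times \bV_2$.

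\textbf{Outer inf-sup for $\mathcal{B}$.} Since $\mathcal{B}$ ignores the $X$ component and uses only the $M$-component, we have
\begin{equation*}
\sup_{(\bgamma,(\btau,q))} \frac{\langle \mathcal{B}(\bgamma,(\btau,q)),\bv\rangle}{\norm{(\bgamma,(\btau,q))}} \ge \sup_{(\btau,q)\in M}\frac{\langle B_2(\btau,q),\bv\rangle}{\norm{(\btau,q)}_M}.
\end{equation*}
The inf-sup condition for $\mathcal{B}$ is therefore exactly the third hypothesis, with the same constant.

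\textbf{Inner problem on the kernel.} Next I would show that $\mathcal{A}$, restricted to $X\times\bV_2$ and tested against $X\times\bV_2$, is an isomorphism. It is itself a saddle-point operator: $A$ is coercive on all of $X$, so it is in particular coercive on $\ker(B_1|_{\bV_2})$. By the second hypothesis, $B_1$ satisfies an inf-sup condition on $\bV_2$. Brezzi's theorem then gives a global inf-sup condition (and its transposed version, by symmetry of the block structure) for the bilinear form
\begin{equation*}
\langle A\bk,\bgamma\rangle + \langle B_1\bgamma,(\bsigma,p)\rangle + \langle B_1\bk,(\btau,q)\rangle
\end{equation*}
on $(X\times\bV_2)^2$. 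The resulting constant depends only on $\norm{A}$, the coercivity constant of $A$, $\norm{B_1}$ and the inf-sup constant $\beta_1$. Boundedness of all operators is immediate from Cauchy--Schwarz and $\norm{\bnabla\cdot\btau}\le\norm{\btau}_{\mathbb{H}(\bdiv)}$.

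\textbf{Conclusion.} The generalised Brezzi theorem, in which the kernel block is only inf-sup stable rather than coercive (Nicolaides/Bernardi form), now yields existence, uniqueness, and the stability bound with $C$ depending only on the listed constants. For the bound I would make the standard lifting construction explicit. First pick $(\btau_0,q_0)\in M$ with $B_2(\btau_0,q_0)=F_3$, where the inf-sup condition for $B_2$ gives $\norm{(\btau_0,q_0)}\le \beta_2^{-1}\norm{F_3}$. Then solve the inner problem on $X\times\bV_2$ with the modified right-hand side, and finally recover $\bu$ from the first two equations tested on $M$, again by the inf-sup condition for $B_2$. The main obstacle is this middle step: the inner block is indefinite, so plain coercivity on $\ker\mathcal{B}$ fails. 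The fix is to invoke the version of Brezzi's theorem requiring only inf-sup conditions (both directions) on the kernel, and these follow from the inner Babu\v{s}ka--Brezzi argument. One must also check that the adjoint inf-sup holds; this follows automatically because the inner operator is symmetric.
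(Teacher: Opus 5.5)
Your proof is correct. The paper gives no argument of its own here and simply invokes the abstract twofold saddle-point result \cite[Theorem~2.2]{Gatica2003} (see also \cite{Howell2011}); your kernel reduction is the standard way that result is proved in the linear Hilbert setting. That reduction consists of the outer inf-sup for $B_2$, the inner Brezzi argument for $(A,B_1)$ on $X\times\bV_2$, and then lifting $F_3$ and recovering $\bu$ through the inf-sup of $B_2$.
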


We now examine the properties of the bilinear forms and linear functionals. 
The boundedness of $A(\bullet,\bullet)$, $B_1(\bullet,\bullet)$ and $B_2(\bullet,\bullet)$ follows directly from H{\"o}lder's inequality and the definitions of the norms. 
In addition, $A(\bullet,\bullet)$ is coercive on $\mathbb{L}^2(\Omega)$. It therefore remains to prove the corresponding inf-sup conditions.

\begin{lemma}\label{lem:inf-sup-B2}
There exists a constant $\beta_2>0$ such that
\begin{equation*}
  \sup_{(\btau,q)\in \mathbb{H}^{\bzero}(\bdiv,\Omega)\times \mathrm{L}^2(\Omega)\setminus\{\bzero\}}
  \frac{\langle B_2(\btau,q),\bv\rangle}{\norm{(\btau,q)}}
  \ge
  \beta_2 \norm{\bv}_{\bL^2(\Omega)}
  \qquad \forall\, \bv\in \bL^2(\Omega).
\end{equation*}
\end{lemma}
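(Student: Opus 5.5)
Given $\bv\in\bL^2(\Omega)$, the plan is to exhibit an explicit $\btau\in\mathbb{H}^{\bzero}(\bdiv,\Omega)$ with $-\bnabla\cdot\btau=\bv$ and $\norm{\btau}_{\mathbb{H}(\bdiv,\Omega)}\le C\norm{\bv}_{\bL^2(\Omega)}$. We then take $q=0$. Since the numerator only involves $\btau$, this choice gives
\[
\frac{\langle B_2(\btau,0),\bv\rangle}{\norm{(\btau,0)}} = \frac{\norm{\bv}^2_{\bL^2(\Omega)}}{\norm{\btau}_{\mathbb{H}(\bdiv,\Omega)}}\ge C^{-1}\norm{\bv}_{\bL^2(\Omega)},
\]
which is the claim with $\beta_2=C^{-1}$.

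To build $\btau$, I will solve an auxiliary vector Poisson problem with mixed boundary conditions, imposing the traction condition on $\Gamma_t$. Find $\bz\in\bH^1_{\Gamma_d}(\Omega)\doteq\{\bz\in\bH^1(\Omega):\bz=\bzero \text{ on }\Gamma_d\}$ such that
\[
\twoprod{\bnabla\bz,\bnabla\bw}{\Omega}=\twoprod{\bv,\bw}{\Omega}\qquad\forall\,\bw\in\bH^1_{\Gamma_d}(\Omega).
\]
I assume $\Gamma_d$ has positive measure. Then Poincaré's inequality gives coercivity, and Lax--Milgram gives a unique $\bz$ with $\norm{\bz}_{\bH^1(\Omega)}\le C_P\norm{\bv}_{\bL^2(\Omega)}$.

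Set $\btau\doteq\bnabla\bz\in\mathbb{L}^2(\Omega)$. Testing with $\bw\in[C_c^\infty(\Omega)]^d$ gives $-\bnabla\cdot\btau=\bv$ in the distributional sense, so $\bnabla\cdot\btau\in\bL^2(\Omega)$ and $\btau\in\mathbb{H}(\bdiv,\Omega)$. Next, test with general $\bw\in\bH^1_{\Gamma_d}(\Omega)$ and use the generalized Green formula in $\mathbb{H}(\bdiv,\Omega)$. This yields $\langle\btau\bn,\bw\rangle_{\Gamma}=0$ for all such $\bw$, i.e.\ $\btau\bn=\bzero$ on $\Gamma_t$ in $\bH^{-1/2}$ (the restricted-trace sense), so $\btau\in\mathbb{H}^{\bzero}(\bdiv,\Omega)$. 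Finally,
\[
\norm{\btau}^2_{\mathbb{H}(\bdiv,\Omega)}\le \norm{\bz}^2_{\bH^1(\Omega)}+\norm{\bv}^2_{\bL^2(\Omega)}\le (C_P^2+1)\norm{\bv}^2_{\bL^2(\Omega)}.
\]

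The main subtlety is the boundary condition on the stress space. The space $\mathbb{H}^{\bzero}(\bdiv,\Omega)$ constrains the normal trace only on $\Gamma_t$, consistent with \eqref{eq:weak-3} and the discrete definition. The weak Neumann condition of the auxiliary problem therefore has to match this constraint exactly, and making the normal-trace statement rigorous on a Lipschitz domain with a partition of the boundary requires the dual of $\bH^{1/2}_{00}(\Gamma_t)$.

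The argument also needs $|\Gamma_d|>0$. If $\Gamma_t=\Gamma$, then $\bv$ must be orthogonal to constants, which is the usual pure-traction compatibility condition. In that case I would use the pure Neumann problem on the mean-zero subspace, together with a Poincaré--Wirtinger inequality, instead.
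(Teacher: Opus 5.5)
Your proof is correct and follows essentially the same route as the paper: take $q=0$, solve the auxiliary vector Poisson problem with $\bz=\bzero$ on $\Gamma_d$ and a homogeneous Neumann condition on $\Gamma_t$, set $\btau=\bnabla\bz$, and use Poincaré/Lax--Milgram to get $\norm{\btau}_{\mathbb{H}(\bdiv,\Omega)}\le C\norm{\bv}_{\bL^2(\Omega)}$. You are also more careful than the paper on points it leaves implicit. Only the energy estimate is needed, not extra elliptic regularity. The normal trace condition lives on $\Gamma_t$ in the dual of $\bH^{1/2}_{00}(\Gamma_t)$. And $|\Gamma_d|>0$ is genuinely required, since constant $\bv$ violate the bound in the pure-traction case.
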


\begin{proof}
It follows by taking $q=0$ and noting that
\[
\sup_{(\btau,q)} \frac{\langle B_2(\btau,q),\bv\rangle}{\norm{(\btau,q)}}
\ge
\sup_{\btau\in \mathbb{H}^{\bzero}(\bdiv,\Omega)\setminus\{\bzero\}}
\frac{-\twoprod{\bnabla \cdot \btau, \bv}{\Omega}}{\norm{\btau}_{\bbH(\bdiv,\Omega)}}.
\]
The right-hand side is bounded from below by a standard argument based on the surjectivity of the divergence operator; we include the details for completeness.
Given $\bv\in \bL^2(\Omega)$, consider $\bz\in \bH^1(\Omega)$ by solving
\begin{equation}\label{eq:aux12}
  -\bDelta \bz = \bv \quad \text{in } \Omega,\qquad 
  \bz = \bzero \quad \text{on } \Gamma_d, \qquad 
  \nabla\bz \, \bn = \bzero \quad \text{on } \Gamma_t.
\end{equation}
Next, define $\tilde{\btau} \doteq \nabla\bz$. Then
\[
\bnabla \cdot \tilde{\btau} = -\bv \quad \text{in } \Omega,
\qquad
\tilde{\btau}\bn=\bzero \quad \text{on } \Gamma_t,
\]
so that $\tilde{\btau}\in \mathbb{H}^{\bzero}(\bdiv,\Omega)$. By Poincar{\'e}'s inequality and the standard data regularity for problem~\eqref{eq:aux12}, there exists a constant $\bar C>0$ such that
$$ \norm{\tilde{\btau}}_{\bbH(\bdiv,\Omega)} \le \bar{C} \norm{\bv}_{\bL^2(\Omega)}. $$

Therefore,
\begin{align*}
\sup_{\btau\in \mathbb{H}^{\bzero}(\bdiv,\Omega)\setminus\{\bzero\}}
\frac{-\twoprod{\bnabla \cdot \btau, \bv}{\Omega}}{\norm{\btau}_{\bbH(\bdiv,\Omega)}}
&\ge
\frac{-\twoprod{\bnabla \cdot \tilde{\btau},\bv}{\Omega}}{\norm{\tilde{\btau}}_{\bbH(\bdiv,\Omega)}}  =
\frac{\norm{\bv}_{\bL^2(\Omega)}^2}{\norm{\tilde{\btau}}_{\bbH(\bdiv,\Omega)}}
\ge
\frac{1}{\bar C}\norm{\bv}_{\bL^2(\Omega)}.
\end{align*}
Setting $\beta_2 \doteq \bar C^{-1}$ concludes the proof.
\end{proof}

Next, we have the following characterisation for the kernel of $B_2$:
\begin{align*}
\bV_2 \doteq \ker(B_2)
&= \{ (\btau,q)\in \mathbb{H}^{\bzero}(\bdiv,\Omega)\times \mathrm{L}^2(\Omega)
: \langle B_2(\btau,q),\bv\rangle = 0 \quad \forall\, \bv\in \bL^2(\Omega) \} \\
&= \{ (\btau,q)\in \mathbb{H}^{\bzero}(\bdiv,\Omega)\times \mathrm{L}^2(\Omega)
: \bnabla \cdot \btau = \bzero \text{ in } \Omega \}.
\end{align*}
Indeed, the second equality follows by choosing $\bv=\bnabla \cdot \btau\in \bL^2(\Omega)$.

We also recall the following auxiliary estimate, which is a direct consequence of~\cite[Lemmas~3.1--3.2]{Alvarez2015}: there exists a constant $\gamma_0>0$ such that
\begin{equation}\label{eq:aux1}
\norm{\btau}_{\bbH(\bdiv,\Omega)} \le
\gamma_0\bigl(\norm{\btau^{\dev}}_{\bbL^2(\Omega)}
+
\norm{\bnabla \cdot \btau}_{\bL^2(\Omega)}\bigr)
\qquad \forall\, \btau\in \mathbb{H}^{\bzero}(\bdiv,\Omega),
\end{equation}
where $\btau^{\dev} \doteq \btau - \frac{\tr(\btau)}{d}\bI$ is the deviatoric part of $\btau$ and $\tr(\btau^{\dev})=0$.

\begin{lemma}\label{lem:inf-sup-B1}
There exists a constant $\beta_1>0$ such that
\[
\sup_{\bgamma\in \mathbb{L}^2(\Omega)\setminus\{\bzero\}}
\frac{\langle B_1(\bgamma),(\btau,q)\rangle}{\norm{\bgamma}_{\bbL^2(\Omega)}}
\ge
\beta_1\bigl(\norm{\btau}_{\bbH(\bdiv,\Omega)} + \norm{q}_{\rL^2(\Omega)}\bigr)
\qquad \forall\,(\btau,q)\in \bV_2.
\]
\end{lemma}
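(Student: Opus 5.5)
Since $\bgamma$ ranges over all of $\mathbb{L}^2(\Omega)$, the supremum is just the $\mathbb{L}^2$-norm of the Riesz representative of $B_1(\cdot)$ acting on $(\btau,q)$. Writing
$\langle B_1(\bgamma),(\btau,q)\rangle = -\twoprod{\bgamma,\btau + q\bI}{\Omega}$,
we take $\bgamma \doteq -(\btau + q\bI)\in\mathbb{L}^2(\Omega)$. This gives
\[
\sup_{\bgamma}\frac{\langle B_1(\bgamma),(\btau,q)\rangle}{\norm{\bgamma}_{\bbL^2(\Omega)}}\ \ge\ \norm{\btau+q\bI}_{\bbL^2(\Omega)}.
\]
It therefore suffices to show $\norm{\btau+q\bI}_{\bbL^2(\Omega)}\ge c\,(\norm{\btau}_{\bbH(\bdiv,\Omega)}+\norm{q}_{\rL^2(\Omega)})$ for all $(\btau,q)\in\bV_2$.

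The first step is to split $\btau+q\bI$ orthogonally into deviatoric and spherical parts. Since $(q\bI)^{\dev}=\bzero$, we have
\[
\norm{\btau+q\bI}^2_{\bbL^2(\Omega)}=\norm{\btau^{\dev}}^2_{\bbL^2(\Omega)}+\tfrac1d\norm{\tr(\btau)+dq}^2_{\rL^2(\Omega)}.
\]
On $\bV_2$ we have $\bnabla\cdot\btau=\bzero$, so \eqref{eq:aux1} gives $\norm{\btau}_{\bbH(\bdiv,\Omega)}\le\gamma_0\norm{\btau^{\dev}}_{\bbL^2(\Omega)}\le\gamma_0\norm{\btau+q\bI}_{\bbL^2(\Omega)}$, which controls $\btau$.

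The second step controls $q$ by the triangle inequality:
\[
\sqrt d\,\norm{q}_{\rL^2(\Omega)}=\norm{q\bI}_{\bbL^2(\Omega)}\le\norm{\btau+q\bI}_{\bbL^2(\Omega)}+\norm{\btau}_{\bbL^2(\Omega)}\le(1+\gamma_0)\norm{\btau+q\bI}_{\bbL^2(\Omega)}.
\]
Adding the two bounds yields the claim with $\beta_1^{-1}=\gamma_0+(1+\gamma_0)/\sqrt d$.

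The main obstacle is the applicability of \eqref{eq:aux1}. That estimate is only valid on $\mathbb{H}^{\bzero}(\bdiv,\Omega)$ when $\Gamma_t$ has positive measure, so that constant multiples of $\bI$ are excluded; this is what the cited lemmas of \cite{Alvarez2015} provide. If $\Gamma_t=\emptyset$, one would have to restrict to zero-mean traces, and indeed $(\btau,q)=(\bI,-1)$ then lies in the kernel of $B_1$. I would therefore state explicitly that $|\Gamma_t|>0$ is assumed, as implicit in the use of \eqref{eq:aux1}.
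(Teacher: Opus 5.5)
Your proof is correct, but it takes a different and somewhat cleaner route than the paper.

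\textbf{The paper's route.} It splits into two cases according to whether $\norm{q}_{\rL^2(\Omega)}$ or $\norm{\btau}_{\bbH(\bdiv,\Omega)}$ is larger. In the first case it tests with $-\btau^{\dev}$, which gives exactly $\norm{\btau^{\dev}}_{\bbL^2(\Omega)}$, and then applies \eqref{eq:aux1}. In the second case it tests with $\btau-q\bI$, which gives $d\norm{q}^2_{\rL^2(\Omega)}-\norm{\btau}^2_{\bbL^2(\Omega)}$ and hence a lower bound $\frac{d-1}{d+1}\norm{q}_{\rL^2(\Omega)}$. It ends with $\beta_1=\min\{\frac12\gamma_0^{-1},\frac{d-1}{2(d+1)}\}$.

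\textbf{Your route.} You write $\langle B_1(\bgamma),(\btau,q)\rangle=-\twoprod{\bgamma,\btau+q\bI}{\Omega}$, so the single Riesz representative $-(\btau+q\bI)$ attains the supremum. The pointwise orthogonality of deviatoric and spherical parts, plus one triangle inequality, then controls $\btau$ and $q$ at once. There is no case distinction, and you get the explicit constant $\beta_1=(\gamma_0+(1+\gamma_0)/\sqrt{d})^{-1}$. Both arguments rest on the same key estimate \eqref{eq:aux1}.

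\textbf{Payoff in the discrete case.} Your choice matters in the discrete Lemma~\ref{lem:inf-sup-B1-h}, where the paper asserts that its proof carries over verbatim. In fact $-\btau_h^{\dev}$ is in general not in $\mathbb{H}_{k,h}$. The trace $\tr(\btau_h)$ of a BDM tensor is generally discontinuous across faces, so $\tr(\btau_h)\bI$ lacks normal continuity. By contrast, $-(\btau_h+q_h\bI)$ does lie in $\mathbb{H}_{k,h}$, precisely because $q_h\bI\in\mathbb{H}_{k,h}$. Moreover, discretely divergence-free BDM tensors are exactly divergence-free, since $\bnabla\cdot\btau_h\in\overline{\mathbf{H}}_{k-1,h}$. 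So your argument transfers to the discrete setting unchanged.

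\textbf{The boundary assumption.} Your caveat that \eqref{eq:aux1}, and therefore the lemma, needs $|\Gamma_t|>0$ is also correct. If $\Gamma_t=\emptyset$, the pair $(\bI,-1)\in\bV_2$ annihilates $B_1$. The paper leaves this assumption implicit.
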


\begin{proof}
A related proof for the 2D case with pure displacement boundary conditions can be found in~\cite{Gatica2004}. We adapt the argument to the present setting.
Consider a generic pair $(\btau,q)\in \bV_2$. By the characterisation of $\bV_2$, we know that $\btau\in \mathbb{H}^{\bzero}(\bdiv,\Omega)$, $q\in \mathrm{L}^2(\Omega)$ and $\bnabla \cdot \btau=\bzero$ in $\Omega$.

\smallskip
\noindent\emph{Case 1: $\norm{q}_{\rL^2(\Omega)}\le \norm{\btau}_{\bbH(\bdiv,\Omega)}$.}
Choose $\tilde{\bgamma} \doteq -\btau^{\dev}$. Then  
\begin{align*}
\frac{\langle B_1(\tilde{\bgamma}),(\btau,q)\rangle}{\norm{\tilde{\bgamma}}_{\bbL^2(\Omega)}} &=
\frac{\twoprod{\btau, \btau^{\dev}}{\Omega} + \twoprod{\tr(\btau^{\dev}), q}{\Omega}}{\norm{\btau^{\dev}}_{\bbL^2(\Omega)}} =
\frac{\norm{\btau^{\dev}}_{\bbL^2(\Omega)}^2 + \twoprod{\frac{\tr(\btau)}{d}\bbI, \btau^{\dev}}{\Omega}}{\norm{\btau^{\dev}}_{\bbL^2(\Omega)}} = \norm{\btau^{\dev}}_{\bbL^2(\Omega)}.
\end{align*}
Using~\eqref{eq:aux1}, $\bnabla \cdot \btau=\bzero$ and $\norm{q}_{\rL^2(\Omega)}\le \norm{\btau}_{\bbH(\bdiv,\Omega)}$ we obtain
\[
\norm{\btau^{\dev}}_{\bbL^2(\Omega)}
\ge \gamma_0^{-1}\norm{\btau}_{\bbH(\bdiv,\Omega)}
\ge \tfrac12\gamma_0^{-1}
\bigl(\norm{\btau}_{\bbH(\bdiv,\Omega)}+\norm{q}_{\rL^2(\Omega)}\bigr).
\]

\smallskip
\noindent\emph{Case 2: $\norm{q}_{\rL^2(\Omega)}\ge \norm{\btau}_{\bbH(\bdiv,\Omega)}$.}
Choose $\hat{\bgamma} \doteq \btau-q\bbI$. Then
\[
\langle B_1(\hat{\bgamma}),(\btau,q)\rangle
= -\twoprod{\btau, \btau - q\bbI}{\Omega} - \twoprod{\tr(\btau - q\bbI), q}{\Omega}
= -\norm{\btau}_{\bbL^2(\Omega)}^2 + d\norm{q}_{\rL^2(\Omega)}^2.
\]
Moreover, by the triangle inequality,
\[
\norm{\hat{\bgamma}}_{\bbL^2(\Omega)} \le \norm{\btau}_{\bbL^2(\Omega)} + d \norm{q}_{\rL^2(\Omega)}.
\]
Therefore, with $\norm{q}_{\rL^2(\Omega)}\ge \norm{\btau}_{\bbH(\bdiv,\Omega)}$,
\[
\frac{\langle B_1(\hat{\bgamma}),(\btau,q)\rangle}{\norm{\hat{\bgamma}}_{\bbL^2(\Omega)}} \ge
\frac{d-1}{d+1}\norm{q}_{\rL^2(\Omega)} \ge
\frac{d-1}{2(d+1)} \bigl(\norm{\btau}_{\bbH(\bdiv,\Omega)}+\norm{q}_{\rL^2(\Omega)}\bigr).
\]

By choosing
\[
\beta_1 \doteq \min\!\left\{ \tfrac12\gamma_0^{-1},\,\tfrac{d-1}{2(d+1)}\right\}
\]
we finish the proof.
\end{proof}

\begin{theorem}
There exists a unique 
\[
(\bk,(\bsigma,p),\bu) \in 
\mathbb{L}^2(\Omega) \times \bigl[\mathbb{H}^{\bzero}(\bdiv,\Omega) \times \mathrm{L}^2(\Omega)\bigr] \times \bL^2(\Omega)
\]
satisfying
\begin{equation}\label{eq:well-posed-elast}
\begin{alignedat}{3}
  \langle A(\bk), \bgamma \rangle &+ \langle B_1(\bgamma), (\bsigma,p)\rangle & &= F_1(\bgamma), \quad
  &\forall \bgamma \in \mathbb{L}^2(\Omega),\\
  \langle B_1(\bk), (\btau,q)\rangle & &+\,\langle B_2(\btau,q), \bu \rangle &= F_2(\btau,q), \quad
  &\forall (\btau,q) \in \mathbb{H}^{\bzero}(\bdiv,\Omega) \times \mathrm{L}^2(\Omega),\\
  &\langle B_2(\bsigma,p), \bv \rangle & &= F_3(\bv), \quad
  &\forall \bv \in \bL^2(\Omega),
\end{alignedat} 
\end{equation}
and moreover there exists a constant $C>0$ such that
\[
\norm{\bk}_{\bbL^2(\Omega)} + \norm{(\bsigma,p)}_{\mathbb{H}(\bdiv,\Omega) \times \mathrm{L}^2(\Omega)} + \norm{\bu}_{\bL^2(\Omega)} 
\le C \bigl(\norm{\rho_0 \bb}_{\bL^2(\Omega)} + \norm{\bar{\bu}}_{\bH^{1/2}(\Gamma_d)} \bigr).
\]
\end{theorem}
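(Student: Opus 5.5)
The plan is to apply the abstract twofold saddle-point result, Theorem~\ref{th:abstract}, to the reformulated system \eqref{eq:well-posed-elast}, which is exactly \eqref{eq:weak-lin3} written with the operators $A$, $B_1$, $B_2$ and the functionals $F_1$, $F_2$, $F_3$. I therefore need to check its three hypotheses and then bound the norms of the data functionals.

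For the hypotheses: $A$ is clearly linear and bounded. It is coercive with constant $\mu$, since $\langle A(\bgamma),\bgamma\rangle=\mu\norm{\bgamma}^2_{\bbL^2(\Omega)}$ and $\mu>0$. The inf-sup condition for $B_1$ on the kernel $\bV_2$ is Lemma~\ref{lem:inf-sup-B1}. The inf-sup condition for $B_2$ is Lemma~\ref{lem:inf-sup-B2}. Boundedness of $B_1$ and $B_2$ follows from Cauchy--Schwarz, using $\norm{\tr(\bgamma)}_{\rL^2(\Omega)}\le\sqrt d\,\norm{\bgamma}_{\bbL^2(\Omega)}$. Theorem~\ref{th:abstract} then gives existence, uniqueness, and the bound by $\norm{F_1}+\norm{F_2}+\norm{F_3}$, with a constant depending only on $\mu$, $\beta_1$, $\beta_2$, $d$ and the continuity constants.

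Next I estimate the functionals one by one.
\begin{itemize}
\item $\norm{F_3}\le\norm{\rho_0\bb}_{\bL^2(\Omega)}$, directly from Cauchy--Schwarz.
\item $|\langle F_1,\bgamma\rangle|\le \mu\sqrt{d}\,|\Omega|^{1/2}\norm{\bgamma}_{\bbL^2(\Omega)}$. This functional comes from the linearisation about the reference state (the $\tilde{\bP}(\bzero)=\mu\bbI$ term). It is bounded, but it is not controlled by $\bb$ or $\bar{\bu}$.
\item For $F_2$, I need the normal trace estimate for $\mathbb{H}(\bdiv,\Omega)$. For $\btau\in\mathbb{H}^{\bzero}(\bdiv,\Omega)$ with $\btau\bn=\bzero$ on $\Gamma_t$, the pairing $\twoprod{\bar{\bu},\btau\bn}{\Gamma_d}$ is interpreted as the $\bH^{1/2}$--$\bH^{-1/2}$ duality on $\Gamma$, with $\bar{\bu}$ replaced by a bounded extension $E\bar{\bu}\in\bH^{1/2}(\Gamma)$. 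Integrating by parts against a lifting $\bw\in\bH^1(\Omega)$ of $E\bar{\bu}$ gives $|\langle F_2,(\btau,q)\rangle|\le C\norm{\bar{\bu}}_{\bH^{1/2}(\Gamma_d)}\norm{\btau}_{\bbH(\bdiv,\Omega)}$.
\end{itemize}
Summing these bounds gives the stated estimate.

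The main obstacle is reconciling $F_1$ with the stated right-hand side, which contains only $\rho_0\bb$ and $\bar{\bu}$. I would handle this by noting that the term $\twoprod{\mu,\tr(\bgamma)}{\Omega}$ can be absorbed by a shift $\bsigma\mapsto\bsigma-\mu\bbI$ or $p\mapsto p-\mu$. The shift $p\mapsto p-\mu$ leaves the other equations unchanged, because $p$ appears only in the first row; the shift $\bsigma\mapsto\bsigma-\mu\bbI$ would instead need $\mu\bbI\bn=\bzero$ on $\Gamma_t$, which fails. So I would shift the pressure: set $\hat p=p+\mu$, which makes $F_1$ vanish. The estimate is then proved for $(\bk,(\bsigma,\hat p),\bu)$, and the constant shift is reinstated, adding a term $\mu|\Omega|^{1/2}$ that can be absorbed into the data term or recorded explicitly. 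A second technical point is the precise meaning of $\norm{\bar{\bu}}_{\bH^{1/2}(\Gamma_d)}$. When $\Gamma_d\ne\Gamma$, one really needs $\bH^{1/2}_{00}$ or an extension-by-zero argument, and I would state that assumption explicitly.
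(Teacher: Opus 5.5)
Your route is the paper's: its entire proof is the one-line appeal to Theorem~\ref{th:abstract} with Lemmas~\ref{lem:inf-sup-B2} and~\ref{lem:inf-sup-B1}. You go further by estimating $F_1$, $F_2$, $F_3$, which the paper leaves implicit. Your diagnosis of $F_1$ is correct, and it exposes something the paper's proof passes over. Theorem~\ref{th:abstract} only gives a bound by $\norm{F_1}+\norm{F_2}+\norm{F_3}$. Here $\norm{F_1}=\mu\sqrt{d}\,|\Omega|^{1/2}$ is a fixed number coming from the residual $\tilde{\bP}(\bzero)=\mu\bbI$ of the initial guess, not from $\bb$ or $\bar{\bu}$.

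What you should drop is the option of absorbing the leftover $\mu|\Omega|^{1/2}$ ``into the data term''. That is impossible, and in fact no argument can prove the estimate as stated. Take $\bb=\bzero$ and $\bar{\bu}=\bzero$. Then $(\bk,(\bsigma,p),\bu)=(\bzero,(\bzero,-\mu),\bzero)$ satisfies all three equations: the first reads $\twoprod{\mu,\tr(\bgamma)}{\Omega}=\twoprod{\mu,\tr(\bgamma)}{\Omega}$ and the other two are trivial. So it is the unique solution, yet $\norm{p}_{\rL^2(\Omega)}=\mu|\Omega|^{1/2}>0$ while the claimed right-hand side vanishes.

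Only your ``record it explicitly'' alternative survives. With your shift $\hat p=p+\mu$ (legitimate, since $B_2$ does not involve $p$), the argument proves the corrected statement
\[
\norm{\bk}_{\bbL^2(\Omega)}+\norm{\bsigma}_{\bbH(\bdiv,\Omega)}+\norm{p+\mu}_{\rL^2(\Omega)}+\norm{\bu}_{\bL^2(\Omega)}\le C\bigl(\norm{\rho_0\bb}_{\bL^2(\Omega)}+\norm{\bar{\bu}}_{\bH^{1/2}(\Gamma_d)}\bigr).
\]
This is the stated bound with an extra $\mu|\Omega|^{1/2}$ on the right. It is sharper than what the paper's direct citation of Theorem~\ref{th:abstract} gives, because it shows that $\bk$, $\bsigma$, $\bu$ and $p+\mu$ are controlled by the data alone.

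Two smaller remarks. First, your extension argument for $F_2$ already works for $\bar{\bu}\in\bH^{1/2}(\Gamma_d)$ without any $\bH^{1/2}_{00}$ hypothesis. Every test function satisfies $\btau\bn=\bzero$ on $\Gamma_t$, so the pairing with $E\bar{\bu}$ does not depend on the chosen extension. Taking the infimum over extensions then gives exactly the quotient norm of $\bH^{1/2}(\Gamma_d)$.

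Second, your observation that the stress shift fails because $\mu\bbI\bn\neq\bzero$ on $\Gamma_t$ tacitly uses $|\Gamma_t|>0$. Lemma~\ref{lem:inf-sup-B1} (via~\eqref{eq:aux1}) needs the same assumption. For $\Gamma_t=\emptyset$ the pair $(c\bbI,-c)$ lies in $\bV_2$ and is annihilated by $B_1$, so uniqueness would fail.
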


\begin{proof}
The result follows directly from Lemmas~\ref{lem:inf-sup-B2} and~\ref{lem:inf-sup-B1}, together with Theorem~\ref{th:abstract}.
\end{proof}

\subsection{Discrete solvability analysis for the linearised problem} \label{sec:na:discrete}
After discretisation, we select the \ac{fe} spaces described in \sect{sec:fe:mixed_fe}. The associated Galerkin scheme for~\eqref{eq:well-posed-elast} then reads:
find $(\bk_h,(\bsigma_h,p_h),\bu_h) \in \mathbb{H}_{k,h} \times [\mathbb{H}_{k,h}^{\bzero} \times \mathrm{H}_{k,h}]  \times \overline{\mathbf{H}}_{k-1,h}$ such that 
\begin{equation}\label{eq:well-posed-elast-h}
\begin{alignedat}{3}
  \langle A(\bk_h), \bgamma_h \rangle &+ \langle B_1(\bgamma_h), (\bsigma_h, p_h)\rangle & &= F_1(\bgamma_h), &\forall \bgamma_h \in \mathbb{H}_{k,h},\\
  \langle B_1(\bk_h), (\btau_h,q_h)\rangle & &+ \langle B_2(\btau_h,q_h), \bu_h \rangle &= F_2(\btau_h,q_h), &\forall (\btau_h,q_h) \in \mathbb{H}_{k,h}^{\bzero} \times \mathrm{H}_{k,h},\\
  &\langle B_2(\bsigma_h,p_h), \bv_h \rangle & &= F_3(\bv_h), &\forall \bv_h \in \overline{\mathbf{H}}_{k-1,h}.
\end{alignedat} 
\end{equation}
It is worth noting that the continuous \ac{fe} space $\mathrm{H}_{k,h}$ is a design choice.

We then state the discrete version of Theorem~\ref{th:abstract}; see also~\cite[Theorem 3.2]{Gatica2003}. 
\begin{theorem}\label{th:abstract-h}
Denote $\bV_{2h} = \ker(B_2)|_{\mathbb{H}_{k,h}^{\bzero} \times \mathrm{H}_{k,h}}$. Assume that 
\begin{itemize}
  \item The operator $A$ is linear, bounded and coercive in  $\mathbb{H}_{k,h}$;
  \item The linear operator $B_1$ satisfies a discrete inf-sup condition on the discrete kernel $\bV_{2h}$;
  \item The linear operator $B_2$ satisfies a discrete inf-sup condition.
\end{itemize}
Then there exists a unique solution $(\bk_h,(\bsigma_h,p_h),\bu_h)\in \mathbb{H}_{k,h} \times [\mathbb{H}_{k,h}^{\bzero} \times \mathrm{H}_{k,h}] \times \overline{\mathbf{H}}_{k-1,h}$ to the Galerkin method associated with~\eqref{eq:weak-lin3}. Furthermore, there exists a constant $C>0$, independent of $h$, but depending on the inf-sup constants, continuity and coercivity constants, such that 
\[ \norm{(\bk_h,(\bsigma_h, p_h),\bu_h)} \leq C \bigl(\norm{F_1} + \norm{F_2} + \norm{F_3} \bigr).\]
\end{theorem}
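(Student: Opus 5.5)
This is the discrete analogue of Theorem~\ref{th:abstract}, so I would prove it by reducing the twofold saddle-point system to two nested classical (Babu\v{s}ka--Brezzi) saddle-point problems. Since all spaces are finite-dimensional, existence follows from uniqueness together with the a priori bound. The main task is therefore to prove the stability estimate, with a constant depending only on the three hypotheses.

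\emph{Step 1 (inner problem).} Set $X_h \doteq \mathbb{H}_{k,h}$ and $M_h \doteq \mathbb{H}_{k,h}^{\bzero}\times \mathrm{H}_{k,h}$, and consider the pair $(A,B_1)$ on $X_h\times\bV_{2h}$. The operator $A$ is bounded and coercive on all of $X_h$, hence in particular on the discrete kernel of $B_1$ restricted to $\bV_{2h}$. Together with the discrete inf-sup condition for $B_1$ on $\bV_{2h}$, Brezzi's theorem shows that the block operator
\[
\mathcal{A}_h \doteq \begin{pmatrix} A & B_1' \\ B_1 & 0\end{pmatrix}
\quad\text{on } X_h\times \bV_{2h}
\]
is an isomorphism. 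Its inverse is bounded by a constant depending only on $\mu$, the continuity bounds and the inf-sup constant of $B_1$. Equivalently, $\mathcal{A}_h$ satisfies a global inf-sup condition on $X_h\times\bV_{2h}$.

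\emph{Step 2 (outer problem).} Regard the full system as a saddle-point problem with leading block $\mathcal{A}_h$ on $X_h\times M_h$ and constraint operator $\tilde B_2(\bgamma_h,(\btau_h,q_h)) \doteq B_2(\btau_h,q_h)$ acting into $\overline{\mathbf{H}}_{k-1,h}'$. The kernel of $\tilde B_2$ is exactly $X_h\times\bV_{2h}$. The discrete inf-sup condition for $B_2$ transfers directly to $\tilde B_2$, because the norm on $X_h\times M_h$ dominates the norm on $M_h$.

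Because $\mathcal{A}_h$ is not coercive, I would use the generalised Babu\v{s}ka--Brezzi theorem, which requires two ingredients: the leading operator must satisfy an inf-sup condition on the constraint kernel, and the constraint must satisfy an inf-sup condition. Step 1 supplies the first and the $B_2$ hypothesis supplies the second. Alternatively, I would argue constructively. First, use the $B_2$ inf-sup condition to pick $(\bsigma_h^0,p_h^0)\in M_h$ with $B_2(\bsigma_h^0,p_h^0)=F_3$ and $\|(\bsigma_h^0,p_h^0)\|\le\beta_2^{-1}\|F_3\|$. Next, solve the Step 1 problem on $X_h\times\bV_{2h}$ for the remainder, with right-hand side $F_1-B_1'(\bsigma_h^0,p_h^0)$ and $F_2-B_1(\cdot)$ restricted to $\bV_{2h}$. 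Finally, recover $\bu_h$ from the second equation tested on all of $M_h$. This is possible because the residual functional vanishes on $\bV_{2h}$ and $B_2'$ has closed range equal to the annihilator of $\bV_{2h}$, so the $B_2$ inf-sup condition gives $\|\bu_h\|\le\beta_2^{-1}\|\text{residual}\|$. Collecting these bounds yields the estimate, and uniqueness follows by taking zero data. Since every constant comes only from the hypotheses, it is independent of $h$ whenever they hold uniformly.

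The main obstacle is the bookkeeping in Step 2. One must check that the residual lies in the annihilator of $\bV_{2h}$, and that the constants from Step 1 are computed on the kernel $\bV_{2h}$ rather than on $M_h$. This is exactly where the hypothesis that $B_1$ satisfies the inf-sup condition only on $\bV_{2h}$ is used. Since the statement is an abstract result, I would simply follow~\cite[Theorem 3.2]{Gatica2003} for these details.
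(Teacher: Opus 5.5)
Your proposal is correct and takes the paper's route: the paper gives no argument of its own and simply invokes \cite[Theorem 3.2]{Gatica2003}, and your nested reduction is the standard argument behind that abstract result (lift $F_3$ via the $B_2$ inf-sup condition, solve the $(A,B_1)$ saddle-point problem on $\mathbb{H}_{k,h}\times\bV_{2h}$, then recover $\bu_h$ from the closed range of $B_2'$). One small fix: the $B_2$ inf-sup condition passes to $\tilde B_2$ because you may restrict the supremum to $\bgamma_h=\bzero$, where the two norms coincide; the fact that the product norm dominates the $M_h$-norm gives the inequality in the wrong direction.
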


\begin{lemma}\label{lem:inf-sup-B1-h}
There exists a constant $\tilde{\beta_1}>0$, independent of $h$,  such that 
\[ \sup_{ \bgamma \in  \mathbb{H}_{k+1,h} \setminus \{\bzero\}} \frac{\langle B_1(\bgamma_h),(\btau_h,q_h)\rangle}{\norm{\bgamma_h}_{\bbL^2(\Omega)}}\geq \tilde{\beta_1} (\norm{\btau_h}_{\bbH(\bdiv,\Omega)} + \norm{q_h}_{\rL^2(\Omega)}) \qquad \forall (\btau_h,q_h) \in \bV_{2h}. \]
\end{lemma}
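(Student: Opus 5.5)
The plan is to mimic the proof of Lemma~\ref{lem:inf-sup-B1}, with two checks: the test tensors must lie in the discrete space, and the auxiliary estimate~\eqref{eq:aux1} must still hold on the discrete kernel. First, characterise $\bV_{2h}$. Since $\bnabla\cdot\mathbb{H}^{\bzero}_{k,h}\subset\overline{\mathbf{H}}_{k-1,h}$ (the divergence of each BDM$_k$ row is a piecewise $\mathrm{P}_{k-1}$ function), we may test $B_2$ with $\bv_h=\bnabla\cdot\btau_h$. Hence $(\btau_h,q_h)\in\bV_{2h}$ exactly when $\bnabla\cdot\btau_h=\bzero$ in $\Omega$. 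Therefore $\bV_{2h}\subset\bV_2$, and~\eqref{eq:aux1} applies directly to $\btau_h$ and gives $\norm{\btau_h}_{\bbH(\bdiv,\Omega)}=\norm{\btau_h}_{\bbL^2(\Omega)}\le\gamma_0\norm{\btau_h^{\dev}}_{\bbL^2(\Omega)}$ with the continuous constant $\gamma_0$. This settles $h$-independence without a discrete analogue of~\eqref{eq:aux1}.

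Next, keep the same two cases. In Case~1, $\norm{q_h}_{\rL^2(\Omega)}\le\norm{\btau_h}_{\bbH(\bdiv,\Omega)}$, and we want to take $\bgamma_h=-\btau_h^{\dev}$. In Case~2 we take $\bgamma_h=\btau_h-q_h\bbI$. The algebra is unchanged from Lemma~\ref{lem:inf-sup-B1}, giving the constants $\tfrac12\gamma_0^{-1}$ and $\tfrac{d-1}{2(d+1)}$. So it only remains to show these tensors belong to the admissible discrete space. The statement writes $\mathbb{H}_{k+1,h}$, while~\eqref{eq:well-posed-elast-h} uses $\mathbb{H}_{k,h}$; in both cases $\mathbb{H}_{k,h}\subset\mathbb{H}_{k+1,h}$, so membership in $\mathbb{H}_{k,h}$ suffices, and a larger sup space only helps.

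The main obstacle is this membership. The tensor $\btau_h\in\mathbb{H}_{k,h}$ belongs there by construction. The identity part $q_h\bbI$, with $q_h\in\mathrm{H}_{k,h}$ continuous piecewise $\mathrm{P}_k$, has rows $q_h\boldsymbol{e}_i$ that are continuous piecewise $\mathrm{P}_k$ vectors, hence $\bH(\vdiv)$-conforming. Thus $q_h\bbI\in\mathbb{H}_{k,h}$, and this is exactly where continuity of the pressure space is used; a discontinuous pressure would break the argument. For the deviator, $\tr(\btau_h)\bbI/d$ is piecewise $\mathrm{P}_k$, but its normal traces are generally discontinuous. I would therefore avoid it: the sup is over $\bgamma_h$ measured only in $\bbL^2$, so the quantity needed is really the $\bbL^2$-orthogonal projection $\Pi_h$ onto $\mathbb{H}_{k,h}$. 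In Case~1, use instead $\bgamma_h=-\Pi_h(\btau_h^{\dev})=-\btau_h+\Pi_h(\tr(\btau_h)\bbI)/d$. Then
\[
\langle B_1(\bgamma_h),(\btau_h,q_h)\rangle=\twoprod{\btau_h^{\dev},\btau_h}{\Omega}+\twoprod{\tr(\Pi_h\btau_h^{\dev}),q_h}{\Omega},
\]
and the second term equals $\twoprod{\Pi_h\btau_h^{\dev},q_h\bbI}{\Omega}=\twoprod{\btau_h^{\dev},q_h\bbI}{\Omega}=0$, because $q_h\bbI\in\mathbb{H}_{k,h}$. The first term is $\norm{\btau_h^{\dev}}^2_{\bbL^2(\Omega)}$, and $\norm{\bgamma_h}_{\bbL^2(\Omega)}\le\norm{\btau_h^{\dev}}_{\bbL^2(\Omega)}$ since $\Pi_h$ is an orthogonal projection. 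So Case~1 gives the same bound as before.

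Finally, set $\tilde\beta_1\doteq\min\{\tfrac12\gamma_0^{-1},\tfrac{d-1}{2(d+1)}\}$, which is independent of $h$, and conclude.
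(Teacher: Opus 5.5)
Your proof is correct, and it differs from the paper's in Case~1, which is where the paper's argument is incomplete. The paper asserts that the proof of Lemma~\ref{lem:inf-sup-B1} carries over verbatim. Its only justification is that $\btau_h\in\mathbb{H}_{k,h}$ (the spaces for $\bk_h$ and $\bsigma_h$ coincide) and that $q_h\bbI\in\mathbb{H}_{k,h}$ (the pressure is continuous). That covers Case~2. Case~1, however, would test with $-\btau_h^{\dev}=-\btau_h+\tr(\btau_h)\bbI/d$, and this is in general not in the BDM space. The reason is that $\tr(\btau_h)$ involves tangential components of the rows of $\btau_h$, which may jump across interelement faces, so $\tr(\btau_h)\bbI$ has discontinuous normal traces.

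Your choice $-\Pi_h(\btau_h^{\dev})$ repairs exactly this step:
\begin{itemize}
\item Self-adjointness of $\Pi_h$, with $\btau_h\in\mathbb{H}_{k,h}$ and $q_h\bbI\in\mathbb{H}_{k,h}$, recovers the numerator $\norm{\btau_h^{\dev}}^2_{\bbL^2(\Omega)}$ and kills the pressure term.
\item $\bbL^2$-contractivity of $\Pi_h$ bounds the denominator, so the constants $\tfrac12\gamma_0^{-1}$ and $\tfrac{d-1}{2(d+1)}$ are unchanged.
\end{itemize}
The only cost is that continuity of the pressure space is now used in both cases rather than only in Case~2.

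You also make explicit two points the paper leaves implicit. First, $\bnabla\cdot\mathbb{H}^{\bzero}_{k,h}\subset\overline{\mathbf{H}}_{k-1,h}$ gives $\bV_{2h}\subset\bV_2$, so~\eqref{eq:aux1} applies with the continuous constant $\gamma_0$, which is what makes $\tilde{\beta_1}$ independent of $h$. Second, the index $k+1$ in the statement is harmless, because enlarging the space over which the supremum is taken can only increase it.
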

\begin{proof}
For the selected \ac{fe} spaces $\mathbb{H}_{k,h} \times [\mathbb{H}_{k,h}^{\bzero} \times \mathrm{H}_{k,h}]  \times \overline{\mathbf{H}}_{k-1,h}$, the proof of Lemma~\ref{lem:inf-sup-B1} carries over verbatim. 
Indeed, the \ac{fe} spaces for $\bk_h$ and $\bsigma_h$ coincide, and for any $q_h \in \mathrm{H}_{k,h}$, the piecewise linear and overall continuous tensor $q_h \bbI$ is contained in the \ac{bdm} space $\mathbb{H}_{k,h}$.
\end{proof}

\subsection{Error estimates for the discrete linearised problem} \label{sec:na:estimate}
We begin by introducing the following interpolation operators: 
\[
  \pi_h : \mathrm{L}^2(\Omega) \rightarrow \mathrm{H}_{k,h}, \quad
  \bar{\pi}_h : \bL^2(\Omega) \rightarrow \overline{\bH}_{k-1,h}, \quad
  \pi_h^{\bdiv} : \bbH(\bdiv, \Omega) \rightarrow \bbH_{k,h},
\]
which satisfy the following commutativity and $\bL^2(\Omega)$-orthogonality properties (see, e.g.~\cite{Ern04}):  
\begin{subequations}\label{eq:pi_properties}
\begin{align}
  \bnabla \cdot \left(\pi_h^{\bdiv}(\bsigma)\right) = \bar{\pi}_h(\bnabla \cdot \bsigma) \in \overline{\mathbf{H}}_{k-1,h} \quad &\forall \bsigma \in \bbH(\bdiv,\Omega),\label{eq:div_commu} \\
  \twoprod{\bu - \bar{\pi}_h(\bu), \bv_h}{\Omega} = 0 \quad &\forall \bv_h \in \overline{\bH}_{k-1,h}. \label{eq:l2_ortho}
\end{align}
\end{subequations}

\begin{theorem}[A priori error estimates]\label{th:na:estimate}
Let $(\bk, (\bsigma, p), \bu)$ denote the exact solution of the continuous problem~\eqref{eq:well-posed-elast}, and assume that these fields are sufficiently smooth.
Denote by $(\bk_h, (\bsigma_h, p_h), \bu_h)$ the solution of the discrete problem~\eqref{eq:well-posed-elast-h}.
Then there exists a constant $C>0$, independent of $h$, such that the errors $\varepsilon_{\bk}=\bk-\bk_h$, $\varepsilon_{\bsigma}=\bsigma-\bsigma_h$, $\varepsilon_p=p-p_h$ and $\varepsilon_{\bu}=\bu-\bu_h$ satisfy
\begin{equation} \label{eq:err_estimates}
\begin{aligned}
  &\norm{\varepsilon_{\bk}}_{\bbL^2(\Omega)} + \norm{\varepsilon_{\bsigma}}_{\bbH(\bdiv,\Omega)} + \norm{\varepsilon_p}_{\rL^2(\Omega)} + \norm{\varepsilon_{\bu}}_{\bL^2(\Omega)}\\
  \leq\, & C h^{k} \left(h\norm{\bk}_{\bbH^{k+1}(\Omega)} + \norm{\bsigma}_{\bbH^{k+1}(\Omega)} + h\norm{p}_{\rH^{k+1}(\Omega)} + \norm{\bu}_{\bH^k(\Omega)}\right).
\end{aligned}
\end{equation}
Moreover,
\begin{equation}\label{eq:kp_estimates}
\begin{aligned}
  \norm{\varepsilon_{\bk}}_{\bbL^2(\Omega)} + \norm{\varepsilon_p}_{\rL^2(\Omega)}
  \leq C h^{k+1} \left( \norm{\bk}_{\bbH^{k+1}(\Omega)} + \norm{\bsigma}_{\bbH^{k+1}(\Omega)} + \norm{p}_{\rH^{k+1}(\Omega)} \right).
\end{aligned}
\end{equation}
\end{theorem}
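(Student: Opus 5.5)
The first estimate \eqref{eq:err_estimates} follows from a Céa-type quasi-optimality argument. Theorem~\ref{th:abstract-h} gives discrete stability with $h$-independent constants; its hypotheses are Lemma~\ref{lem:inf-sup-B1-h}, the discrete analogue of Lemma~\ref{lem:inf-sup-B2}, and coercivity of $A$. The discrete inf-sup for $B_2$ follows by the usual Fortin argument. Take $\tilde\btau$ from the proof of Lemma~\ref{lem:inf-sup-B2}, assuming enough regularity for $\pi_h^{\bdiv}$ to be defined, and set $\btau_h=\pi_h^{\bdiv}\tilde\btau$. Then \eqref{eq:div_commu} gives $\bnabla\cdot\btau_h=-\bar\pi_h\bv_h=-\bv_h$, and the $\pi_h^{\bdiv}$-stability bound gives $\norm{\btau_h}\lesssim\norm{\bv_h}$. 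The boundary condition on $\Gamma_t$ is preserved because the normal trace degrees of freedom vanish.

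Galerkin orthogonality holds because the method is conforming: subtracting \eqref{eq:well-posed-elast-h} from \eqref{eq:well-posed-elast} tested with discrete functions gives zero. Split each error into an interpolation part and a discrete part:
\[
\varepsilon_{\bk}=(\bk-\bk_I)+(\bk_I-\bk_h),\qquad \varepsilon_{\bsigma}=(\bsigma-\pi_h^{\bdiv}\bsigma)+(\pi_h^{\bdiv}\bsigma-\bsigma_h),
\]
\[
\varepsilon_p=(p-\pi_h p)+(\pi_h p-p_h),\qquad \varepsilon_{\bu}=(\bu-\bar\pi_h\bu)+(\bar\pi_h\bu-\bu_h),
\]
where $\bk_I$ is the $\bbL^2$-projection onto $\mathbb{H}_{k,h}$, or $\pi_h^{\bdiv}\bk$. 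The discrete parts solve \eqref{eq:well-posed-elast-h} with residual right-hand sides built from the interpolation errors. Discrete stability then bounds the discrete parts by those residuals. The triangle inequality, together with the standard approximation bounds $\norm{\bk-\bk_I}\lesssim h^{k+1}|\bk|_{k+1}$, $\norm{\bsigma-\pi_h^{\bdiv}\bsigma}_{\bbH(\bdiv)}\lesssim h^{k}|\bsigma|_{k+1}$, $\norm{p-\pi_h p}\lesssim h^{k+1}|p|_{k+1}$ and $\norm{\bu-\bar\pi_h\bu}\lesssim h^k|\bu|_k$, yields \eqref{eq:err_estimates}.

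Several residual terms vanish, which is what makes the second bound possible. Using \eqref{eq:div_commu}, $\bnabla\cdot(\bsigma-\pi_h^{\bdiv}\bsigma)$ is $\bL^2$-orthogonal to $\overline{\bH}_{k-1,h}$, so its contribution in the third equation is zero. Using \eqref{eq:l2_ortho}, and the fact that $\bnabla\cdot\btau_h\in\overline{\bH}_{k-1,h}$ for BDM$_k$, the $\bu$-interpolation error also drops out of the second equation. The residuals are then only
\[
\mu\twoprod{\bk-\bk_I,\bgamma_h}{\Omega}-\twoprod{\bsigma-\pi_h^{\bdiv}\bsigma,\bgamma_h}{\Omega}-\twoprod{p-\pi_hp,\tr\bgamma_h}{\Omega}
\]
together with $\twoprod{\bk-\bk_I,\btau_h}{\Omega}+\twoprod{\tr(\bk-\bk_I),q_h}{\Omega}$. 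If $\bk_I$ is the $\bbL^2$-projection onto $\mathbb{H}_{k,h}\supset \mathrm{H}_{k,h}\bbI$, the latter vanishes entirely. All remaining residuals involve only $\bbL^2$ errors of order $h^{k+1}$. Consequently the discrete parts of every field, including $\bu$, are $O(h^{k+1})$, and the $h^k$ in \eqref{eq:err_estimates} comes solely from the divergence part of the $\bsigma$ interpolation error and from $\bu-\bar\pi_h\bu$.

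For \eqref{eq:kp_estimates}, add the interpolation errors of $\bk$ and $p$, both $O(h^{k+1})$, to the superconvergent discrete parts. The main obstacle is verifying that the discrete stability constant controls the discrete parts of $(\bk,p)$ using only $\bbL^2$-type residual norms; in particular, the residual functional in the first equation must be bounded in $(\mathbb{H}_{k,h})'$ with the $\bbL^2$ norm. Since the first equation is tested in $\bbL^2$, this is fine. One must also check that the $\btau_h$-test does not reintroduce $\norm{\bnabla\cdot(\bsigma-\pi_h^{\bdiv}\bsigma)}$, and this is exactly where the commuting property \eqref{eq:div_commu} is used.
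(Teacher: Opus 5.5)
Your proposal is correct and follows essentially the same route as the paper. You split each error into an interpolation part and a discrete part, remove the $\eta_{\bu}$ and $\bnabla\cdot\eta_{\bsigma}$ residuals via \eqref{eq:l2_ortho} and \eqref{eq:div_commu}, and invoke the discrete stability of Theorem~\ref{th:abstract-h} so the discrete errors are bounded by the $\bbL^2$ interpolation errors of $\bk$, $\bsigma$, $p$ (hence $O(h^{k+1})$). Then you conclude by the triangle inequality.

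Your Fortin argument for the discrete $B_2$ inf-sup condition, which the paper leaves implicit, is a harmless addition. So is taking $\bk_I$ as the $\bbL^2$-projection, which kills the $B_1(\eta_{\bk})$ residual since $\btau_h$ and $q_h\bbI$ lie in $\mathbb{H}_{k,h}$. The only slip is in your last sentence: \eqref{eq:div_commu} is needed for the $\bv_h$-test, not the $\btau_h$-test.
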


\begin{proof}
For each unknown, we decompose the total error $\varepsilon$ into an interpolation error and a discrete error:
\begin{align*}
  \varepsilon_{\bk} &= \bk - \bk_h = \bk - \pi_h^{\bdiv}(\bk) + (\pi_h^{\bdiv}(\bk) - \bk_h) = \eta_{\bk} + \xi_{\bk}, \\
  \varepsilon_{\bsigma} &= \bsigma - \bsigma_h = \bsigma - \pi_h^{\bdiv}(\bsigma) + (\pi_h^{\bdiv}(\bsigma) - \bsigma_h) =  \eta_{\bsigma} + \xi_{\bsigma}, \\
  \varepsilon_p &= p - p_h = p - \pi_h(p) + (\pi_h(p) - p_h) = \eta_p + \xi_p, \\
  \varepsilon_{\bu} &= \bu - \bu_h = \bu - \bar{\pi}_h(\bu) + (\bar{\pi}_h(\bu) - \bu_h) = \eta_{\bu} + \xi_{\bu}.
\end{align*}
The interpolation errors $\eta$ depend only on the regularity of the exact solution and satisfy the standard estimates:
\begin{align}\label{eq:intep_errs} 
  \norm{\eta_{\bk}}_{\bbL^2(\Omega)} &\leq C h^{k+1} \norm{\bk}_{\bbH^{k+1}(\Omega)},
  &\quad
  \norm{\eta_p}_{\rL^2(\Omega)} &\leq C h^{k+1} \norm{p}_{\rH^{k+1}(\Omega)}, \\
\nonumber   \norm{\eta_{\bsigma}}_{\bbL^2(\Omega)} &\leq C h^{k+1} \norm{\bsigma}_{\bbH^{k+1}(\Omega)},
  &\quad
  \norm{\eta_{\bu}}_{\bL^2(\Omega)} &\leq C h^{k} \norm{\bu}_{\bH^k(\Omega)},\quad 
  \norm{\eta_{\bsigma}}_{\bbH(\bdiv,\Omega)} &\leq C h^{k} \norm{\bsigma}_{\bbH^{k+1}(\Omega)}.
\end{align} 

Subtracting the discrete problem~\eqref{eq:well-posed-elast-h} from the continuous problem~\eqref{eq:well-posed-elast}, we obtain the error equations:
\begin{align*}
  \langle A(\xi_{\bk}), \bgamma_h \rangle + \langle B_1(\bgamma_h), (\xi_{\bsigma}, \xi_p)\rangle &= -\langle A(\eta_{\bk}), \bgamma_h \rangle - \langle B_1(\bgamma_h), (\eta_{\bsigma}, \eta_p)\rangle, &\forall \bgamma_h \in \mathbb{H}_{k,h},\\
  \langle B_1(\xi_{\bk}), (\btau_h,q_h)\rangle + \langle B_2(\btau_h,q_h), \xi_{\bu} \rangle &= -\langle B_1(\eta_{\bk}), (\btau_h,q_h)\rangle - \langle B_2(\btau_h,q_h), \eta_{\bu} \rangle, &\forall (\btau_h,q_h) \in \mathbb{H}_{k,h}^{\bzero} \times \mathrm{H}_{k,h},\\
  \langle B_2(\xi_{\bsigma}, \xi_p), \bv_h \rangle &= -\langle B_2(\eta_{\bsigma}, \eta_p), \bv_h \rangle, &\forall \bv_h \in \overline{\mathbf{H}}_{k-1,h}.
\end{align*}
In the second equation, the term $\langle B_2(\btau_h, q_h), \eta_{\bu} \rangle = \twoprod{\eta_{\bu}, \bnabla\cdot \btau_h}{\Omega}$ vanishes. Indeed, for any $\btau_h \in \mathbb{H}^{\bzero}_{k,h}$, we have $\bnabla \cdot \btau_h \in \overline{\mathbf{H}}_{k-1,h}$. Hence, by applying the $\bL^2(\Omega)$-orthogonality~\eqref{eq:l2_ortho}:
\[
  \twoprod{\bu - \bar{\pi}_h(\bu), \bnabla\cdot \btau_h}{\Omega} = 0.
\]
Likewise, in the third equation, the term involving $B_2$ and $(\eta_{\bsigma}, \eta_p)$ also vanishes by using the commutativity and $\bL^2(\Omega)$-orthogonality in~\eqref{eq:pi_properties}:
\[
  \langle B_2(\eta_{\bsigma}, \eta_p), \bv_h \rangle = \twoprod{\bnabla \cdot \eta_{\bsigma}, \bv_h}{\Omega} = \twoprod{\bnabla \cdot (\bsigma - \pi_h^{\bdiv}(\bsigma)), \bv_h}{\Omega} = \twoprod{\bnabla\cdot\bsigma - \bar{\pi}_h(\bnabla \cdot \bsigma), \bv_h}{\Omega} = 0.
\]

The operator $A$ is bounded:
\begin{align*}
  \langle A(\bk), \bgamma \rangle &= \mu \twoprod{\bk, \bgamma}{\Omega} \leq \mu \norm{\bk}_{\bbL^2(\Omega)} \norm{\bgamma}_{\bbL^2(\Omega)} \leq C \norm{\bk}_{\bbL^2(\Omega)} \norm{\bgamma}_{\bbL^2(\Omega)}.
\end{align*}
Similarly, the operator $B_1$ is bounded:
\begin{align*}
  \langle B_1(\bgamma), (\bsigma,p)\rangle &= -\twoprod{\bgamma, \bsigma}{\Omega} - \twoprod{\tr(\bgamma), p}{\Omega} \leq \norm{\bsigma}_{\bbL^2(\Omega)} \norm{\bgamma}_{\bbL^2(\Omega)} + \norm{p}_{\rL^2(\Omega)} \sqrt{d} \norm{\bgamma}_{\bbL^2(\Omega)}\\
  &\leq C \norm{\bgamma}_{\bbL^2(\Omega)} \left(\norm{\bsigma}_{\bbL^2(\Omega)} + \norm{p}_{\rL^2(\Omega)} \right).
\end{align*}

Invoking the stability results of Theorem~\ref{th:abstract-h}, together with the boundedness of $A$ and $B_1$, we obtain the estimate for the discrete errors:
\begin{equation}\label{eq:discrete_errs}
  \norm{\xi_{\bk}}_{\bbL^2(\Omega)} + \norm{\xi_{\bsigma}}_{\bbH(\bdiv,\Omega)} + \norm{\xi_p}_{\rL^2(\Omega)} + \norm{\xi_{\bu}}_{\bL^2(\Omega)}
  \leq C \left(\norm{\eta_{\bk}}_{\bbL^2(\Omega)} + \norm{\eta_{\bsigma}}_{\bbL^2(\Omega)} + \norm{\eta_p}_{\rL^2(\Omega)} \right).
\end{equation}

Finally, applying the triangle inequality $\norm{\varepsilon} = \norm{\eta + \xi} \leq \norm{\eta} + \norm{\xi}$ and combining the interpolation errors~\eqref{eq:intep_errs} with the discrete errors~\eqref{eq:discrete_errs} yields~\eqref{eq:err_estimates}.
Starting from~\eqref{eq:discrete_errs}, we also derive~\eqref{eq:kp_estimates}.
\end{proof}

\subsection{Error estimates for the post-processed displacement field}
As introduced in \sect{sec:fe:ddcorr}, in the linearised setting the discontinuous displacement approximation $\bu_h$ can also be enhanced by solving an additional elliptic problem. 
Assuming homogeneous displacement boundary condition for simplicity, given $\bk_h \in \bbH_{k,h}$, we define the post-processed displacement $\tilde{\bu}_h \in \bH^{\bzero}_{k+1,h}$ as the solution of
\begin{equation}\label{eq:disp_corr-h}
  \twoprod{\bnabla \tilde{\bu}_h, \bnabla \bv_h}{\Omega} = \twoprod{\bk_h, \bnabla \bv_h}{\Omega}, \qquad \forall \bv_h \in \bH^{\bzero}_{k+1,h}.
\end{equation} 
The bilinear form
\[
a(\tilde{\bu}_h,\bv_h) \doteq \twoprod{\bnabla \tilde{\bu}_h, \bnabla \bv_h}{\Omega}
\]
is continuous and coercive on $\bH^{\bzero}_{k+1,h}$; therefore, existence and uniqueness of $\tilde{\bu}_h$ follow from the Lax--Milgram theorem.

\begin{corollary}[Error estimates for the post-processed displacement field]\label{cor:na:postprocessed}
Assume that $\Omega$ is convex. Let $(\bk, (\bsigma, p), \bu)$ denote the exact solution of the continuous problem~\eqref{eq:well-posed-elast} and $(\bk_h, (\bsigma_h, p_h), \bu_h)$ the solution of the discrete problem~\eqref{eq:well-posed-elast-h}. Let $\tilde{\bu}_h\in \bH^{\bzero}_{k+1,h}$ be defined by~\eqref{eq:disp_corr-h}.
Then there exists a constant $C>0$, independent of $h$, such that
\begin{equation}\label{eq:disp_h1_estimate}
\begin{aligned}
  \norm{\bnabla (\bu-\tilde{\bu}_h)}_{\bbL^2(\Omega)}
  \leq C h^{k+1} (\norm{\bnabla \bu}_{\bbH^{k+1}(\Omega)}+\norm{\bsigma}_{\bbH^{k+1}(\Omega)} + \norm{p}_{\rH^{k+1}(\Omega)}).
\end{aligned}
\end{equation}
Moreover,
\begin{equation}\label{eq:disp_l2_estimate}
  \norm{\bu-\tilde{\bu}_h}_{\bL^2(\Omega)} \leq C h^{k+2} (\norm{\bnabla \bu}_{\bbH^{k+1}(\Omega)}+\norm{\bsigma}_{\bbH^{k+1}(\Omega)} + \norm{p}_{\rH^{k+1}(\Omega)}).
\end{equation}
\end{corollary}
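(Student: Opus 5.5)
The plan is to exploit that $\bk=\bnabla\bu$ in the continuous problem. For smooth $\bu$ with $\bu=\bzero$ on $\Gamma_d$, testing the third equation of~\eqref{eq:weak-lin3} with arbitrary $\btau$ and integrating by parts shows $\bk=\bnabla\bu$. Hence $\bu$ satisfies the continuous analogue of~\eqref{eq:disp_corr-h}:
\[
\twoprod{\bnabla \bu,\bnabla \bv_h}{\Omega}=\twoprod{\bk,\bnabla\bv_h}{\Omega}\qquad \forall \bv_h\in \bH^{\bzero}_{k+1,h}.
\]
Subtracting~\eqref{eq:disp_corr-h} yields the perturbed Galerkin orthogonality
\[
\twoprod{\bnabla(\bu-\tilde{\bu}_h),\bnabla\bv_h}{\Omega}=\twoprod{\bk-\bk_h,\bnabla\bv_h}{\Omega}\qquad \forall \bv_h\in \bH^{\bzero}_{k+1,h}.
\]
So $\tilde{\bu}_h$ is the $\bH^1$-Galerkin approximation of $\bu$, perturbed by the error $\varepsilon_{\bk}$.

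For the energy estimate~\eqref{eq:disp_h1_estimate}, let $\bI_h\bu\in\bH^{\bzero}_{k+1,h}$ be a standard Lagrange (or Scott--Zhang) interpolant. Take $\bv_h=\bI_h\bu-\tilde{\bu}_h$ and apply Cauchy--Schwarz. This gives
\[
\norm{\bnabla(\bu-\tilde{\bu}_h)}\le C\bigl(\norm{\bnabla(\bu-\bI_h\bu)}+\norm{\varepsilon_{\bk}}\bigr).
\]
The first term is $O(h^{k+1}\norm{\bu}_{\bH^{k+2}})$, which is controlled by $\norm{\bnabla\bu}_{\bbH^{k+1}}$ through Poincaré. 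The second term is bounded by~\eqref{eq:kp_estimates} of Theorem~\ref{th:na:estimate}, with $\norm{\bk}_{\bbH^{k+1}}=\norm{\bnabla\bu}_{\bbH^{k+1}}$.

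For the $\bL^2$ estimate~\eqref{eq:disp_l2_estimate}, I will use an Aubin--Nitsche duality argument; this is where convexity of $\Omega$ enters. Let $\bz$ solve $-\bDelta\bz=\bu-\tilde{\bu}_h$ with homogeneous mixed conditions as in~\eqref{eq:aux12}, and assume the elliptic regularity $\norm{\bz}_{\bH^2}\le C\norm{\bu-\tilde{\bu}_h}$. Writing $\bz_h=\bI_h\bz$, we get
\[
\norm{\bu-\tilde{\bu}_h}^2=\twoprod{\bnabla(\bu-\tilde{\bu}_h),\bnabla(\bz-\bz_h)}{\Omega}+\twoprod{\varepsilon_{\bk},\bnabla\bz_h}{\Omega}.
\]
The first term is at most $Ch\norm{\bz}_{\bH^2}\norm{\bnabla(\bu-\tilde{\bu}_h)}$, which gives order $h^{k+2}$ after dividing.

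The main obstacle is the second term, $\twoprod{\varepsilon_{\bk},\bnabla\bz_h}{\Omega}$, which naively is only $O(h^{k+1})$. To gain an extra power of $h$, I split it as $\twoprod{\varepsilon_{\bk},\bnabla\bz}{\Omega}+\twoprod{\varepsilon_{\bk},\bnabla(\bz_h-\bz)}{\Omega}$; the second piece is already $O(h^{k+2})$. For the first piece, I will try to choose a test tensor in the mixed system whose divergence realises $\bnabla\bz$. The idea is to use the error equations from the proof of Theorem~\ref{th:na:estimate} with the dual pair $\btau=\bnabla\bz$, projected by $\pi_h^{\bdiv}$. This uses the consistency $\twoprod{\btau_h,\varepsilon_{\bk}}{\Omega}=-\twoprod{\bnabla\cdot\btau_h,\varepsilon_{\bu}}{\Omega}$. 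By~\eqref{eq:l2_ortho}, this reduces to $\twoprod{\bnabla\cdot\btau_h,\xi_{\bu}}{\Omega}$, and the commuting property~\eqref{eq:div_commu} lets me bound it by higher-order interpolation errors of $\pi_h^{\bdiv}(\bnabla\bz)$ times the errors of Theorem~\ref{th:na:estimate}.

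If this superconvergence step does not close cleanly, I will fall back on a full duality argument for the mixed system. Using its stability (Theorem~\ref{th:abstract}) together with $\bH^2$-regularity of the dual linearised problem on the convex $\Omega$, I would show $\norm{\varepsilon_{\bk}}_{(\bH^1)'}\le Ch^{k+2}(\cdots)$, which suffices for the remaining term.
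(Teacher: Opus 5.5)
Your $\bH^1$ bound is correct and is the paper's argument: perturbed Galerkin orthogonality plus \eqref{eq:kp_estimates}. You have also found the real difficulty in the $\bL^2$ bound. The paper's proof removes $\twoprod{\bnabla\varepsilon_{\tilde{\bu}},\bnabla\bz_h}{\Omega}$ ``by Galerkin orthogonality''. As your perturbed orthogonality shows, this term equals $\twoprod{\varepsilon_{\bk},\bnabla\bz_h}{\Omega}$, which a priori is only $O(h^{k+1})\norm{\bu-\tilde{\bu}_h}_{\bL^2(\Omega)}$. Your mixed-boundary dual is also the consistent choice: with $\bz=\bzero$ on all of $\partial\Omega$, integration by parts leaves a term on $\Gamma_t$, and $\Gamma_t\neq\emptyset$ is needed for \eqref{eq:aux1}. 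So the paper does not supply the missing step.

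Your primary route does not supply it either. With $\btau_h=\pi_h^{\bdiv}(\bnabla\bz)$, \eqref{eq:div_commu} gives $\bnabla\cdot\btau_h=\bar\pi_h(\bDelta\bz)=-\bar\pi_h(\bu-\tilde{\bu}_h)$, which is not an interpolation error. The piece you need to be small is therefore $\twoprod{\bar\pi_h(\bu-\tilde{\bu}_h),\xi_{\bu}}{\Omega}\le\norm{\xi_{\bu}}_{\bL^2(\Omega)}\norm{\bu-\tilde{\bu}_h}_{\bL^2(\Omega)}$. Since \eqref{eq:discrete_errs} only gives $\norm{\xi_{\bu}}_{\bL^2(\Omega)}=O(h^{k+1})$, you recover $h^{k+1}$, not $h^{k+2}$. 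Closing the argument needs $\norm{\xi_{\bu}}_{\bL^2(\Omega)}=O(h^{k+2})$, which is itself a duality result for the four-field system.

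That is your fallback. It is the right missing idea, but you only assert it, and once carried out it does not give the full statement. Let $X^*=(\bk^*,(\bsigma^*,p^*),\bu^*)$ solve \eqref{eq:well-posed-elast} with $F_1=\twoprod{\bnabla\bz,\cdot}{\Omega}$ and $F_2=F_3=0$; this is a Stokes problem with mixed boundary conditions. Symmetry of the block operator and Galerkin orthogonality write $\twoprod{\varepsilon_{\bk},\bnabla\bz}{\Omega}$ as the bilinear form evaluated at the error and at $X^*$ minus its interpolant. The $\varepsilon_{\bu}$-term vanishes because $\bnabla\cdot\bsigma^*=\bzero$. All remaining terms but one are $O(h^{k+2})\norm{\bz}_{\bH^2(\Omega)}$, by \eqref{eq:discrete_errs} and first-order interpolation of $(\bk^*,\bsigma^*,p^*)$. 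This holds \emph{provided} the mixed-boundary Stokes problem is $\bH^2\times\rH^1$-regular. Convexity does not give this, since mixed conditions typically produce singularities where $\Gamma_d$ meets $\Gamma_t$; the same caveat applies to your Laplace dual. The exceptional term is
\[
\langle B_2(\varepsilon_{\bsigma},\varepsilon_p),\bu^*-\bar\pi_h\bu^*\rangle=\twoprod{\rho_0\bb-\bar\pi_h(\rho_0\bb),\,\bu^*-\bar\pi_h\bu^*}{\Omega},
\]
using $\bnabla\cdot\bsigma_h=-\bar\pi_h(\rho_0\bb)$. It is $O(h^{k+\min\{k,2\}})$, which suffices for $k\ge 2$ but is only $O(h^2)$ for $k=1$. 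Generically it is exactly of that order; this is the familiar loss of superconvergence for BDM$_1$, whose divergence space is piecewise constant.

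So for the lowest-order pair your argument establishes \eqref{eq:disp_l2_estimate} only under an extra hypothesis such as $\rho_0\bb\in\overline{\bH}_{0,h}$ (e.g.\ $\bb=\bzero$, as in all the experiments). For every $k$, the regularity assumption must be stated explicitly.
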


\begin{proof}

Let $\bpi_h : \bL^2(\Omega) \rightarrow \bH^{\bzero}_{k+1,h}$ be a suitable interpolation operator.
We decompose the total error as
\[
  \varepsilon_{\tilde{\bu}} = \bu - \tilde{\bu}_h = \bu - \bpi_h(\bu) + \bpi_h(\bu) - \tilde{\bu}_h = \eta_{\tilde{\bu}} + \xi_{\tilde{\bu}}.
\]
The interpolation error $\eta_{\tilde{\bu}}$ admits
\[
  \norm{\eta_{\tilde{\bu}}}_{\bL^2(\Omega)} \leq C h^{k+2} \norm{\bu}_{\bH^{k+2}(\Omega)},\quad 
  \norm{\bnabla \eta_{\tilde{\bu}}}_{\bbL^2(\Omega)} \leq C h^{k+1} \norm{\bnabla \bu}_{\bbH^{k+1}(\Omega)}.
\]
Since the exact solution satisfies $\bk = \bnabla \bu$, we have
\[
  \twoprod{\bnabla \bu, \bnabla \bv_h}{\Omega} = \twoprod{\bk, \bnabla \bv_h}{\Omega}, \qquad \forall \bv_h \in \bH^{\bzero}_{k+1,h}.
\]
Subtracting~\eqref{eq:disp_corr-h} from the above equation yields
\[
  \twoprod{\bnabla \xi_{\tilde{\bu}}, \bnabla \bv_h}{\Omega} = \twoprod{\varepsilon_{\bk} - \bnabla \eta_{\tilde{\bu}}, \bnabla \bv_h}{\Omega}.
\]
Choosing $\bv_h = \xi_{\tilde{\bu}}$ and applying the Cauchy--Schwarz and triangle inequalities give
\begin{align*}
  \norm{\bnabla \xi_{\tilde{\bu}}}_{\bbL^2(\Omega)}^2 = \twoprod{\varepsilon_{\bk} - \bnabla \eta_{\tilde{\bu}}, \bnabla \xi_{\tilde{\bu}}}{\Omega}
  \leq \left(\norm{\varepsilon_{\bk}}_{\bbL^2(\Omega)} + \norm{\bnabla \eta_{\tilde{\bu}}}_{\bbL^2(\Omega)}\right) \norm{\bnabla \xi_{\tilde{\bu}}}_{\bbL^2(\Omega)}.
\end{align*}
Using the previously established estimate for $\varepsilon_{\bk}$ and $\varepsilon_p$ in~\eqref{eq:kp_estimates}, we obtain
\[
  \norm{\varepsilon_{\bk}}_{\bbL^2(\Omega)} \leq \norm{\varepsilon_{\bk}}_{\bbL^2(\Omega)} + \norm{\varepsilon_p}_{\rL^2(\Omega)} \leq C h^{k+1} \left( \norm{\bk}_{\bbH^{k+1}(\Omega)} + \norm{\bsigma}_{\bbH^{k+1}(\Omega)} + \norm{p}_{\rH^{k+1}(\Omega)} \right).
\]
Recalling that $\bk = \bnabla \bu$, we obtain~\eqref{eq:disp_h1_estimate}.
We now derive the optimal $\mathbf{L}^2(\Omega)$ error estimate for the post-processed displacement.
Consider the dual problem: find $\bz \in \bH^1_{\bzero}(\Omega)$ such that
\begin{equation}\label{eq:dual_problem}
  -\bDelta \bz = \varepsilon_{\tilde{\bu}} \quad \text{in } \Omega, \qquad
  \bz = \bzero \quad \text{on } \partial\Omega.
\end{equation}
Assuming that $\Omega$ is convex, elliptic regularity gives
\[
  \norm{\bz}_{\bH^2(\Omega)} \leq C \norm{\varepsilon_{\tilde{\bu}}}_{\bL^2(\Omega)}.
\]
Using~\eqref{eq:dual_problem} and integrating by parts, we achieve
\[
  \norm{\varepsilon_{\tilde{\bu}}}_{\bL^2(\Omega)}^2 
  = \twoprod{\varepsilon_{\tilde{\bu}},\varepsilon_{\tilde{\bu}}}{\Omega}
  = -\twoprod{\varepsilon_{\tilde{\bu}},\bDelta \bz}{\Omega}
  = \twoprod{\bnabla \varepsilon_{\tilde{\bu}}, \bnabla \bz}{\Omega}.
\]
Let $\bz_h \doteq \bpi_h(\bz) \in \bH^{\bzero}_{k+1,h}$ be an interpolant of $\bz$ and it comes with the property
\[
  \norm{\bnabla \bz - \bnabla \bz_h}_{\bbL^2(\Omega)} \leq C h \norm{\bz}_{\bH^2(\Omega)}.
\]
We then decompose $\bz = (\bz - \bz_h) + \bz_h$ and obtain
\begin{equation*}
  \norm{\varepsilon_{\tilde{\bu}}}_{\bL^2(\Omega)}^2 
  = \twoprod{\bnabla \varepsilon_{\tilde{\bu}}, \bnabla \bz - \bnabla \bz_h}{\Omega} + \twoprod{\bnabla \varepsilon_{\tilde{\bu}}, \bnabla \bz_h}{\Omega}.
\end{equation*}
The term $\twoprod{\bnabla \varepsilon_{\tilde{\bu}}, \bnabla \bz_h}{\Omega}$ disappears due to Galerkin orthogonality. By the Cauchy--Schwarz inequality, the interpolation property and the elliptic regularity, we reach
\begin{align*}
  \norm{\varepsilon_{\tilde{\bu}}}_{\bL^2(\Omega)}^2 &\leq \norm{\bnabla \varepsilon_{\tilde{\bu}}}_{\bbL^2(\Omega)} \norm{\bnabla \bz - \bnabla \bz_h}_{\bbL^2(\Omega)}
  \leq C h \norm{\bnabla \varepsilon_{\tilde{\bu}}}_{\bbL^2(\Omega)}\norm{\bz}_{\bH^2(\Omega)}\\
  &\leq C h \norm{\bnabla \varepsilon_{\tilde{\bu}}}_{\bbL^2(\Omega)} \norm{\varepsilon_{\tilde{\bu}}}_{\bL^2(\Omega)}.
\end{align*}
Cancelling $\norm{\varepsilon_{\tilde{\bu}}}_{\bL^2(\Omega)}$ from both sides and invoking the estimate in~\eqref{eq:disp_h1_estimate}, we obtain~\eqref{eq:disp_l2_estimate}.
\end{proof}

\section{Numerical experiments} \label{sec:exps}
We evaluate our method through extensive numerical experiments in both 2D and 3D. For brevity, the method is referred to as \ac{ddfem}. We consider two \ac{ddfem} stable pairs, \pairone{} and \pairtwo{}, defined in \sect{sec:fe:fe}. When the displacement approximation is postprocessed as described in \sect{sec:fe:ddcorr}, the tag ``(corr)'' is appended to the corresponding pairs.

\subsection{Compatible-strain mixed finite element methods}
To demonstrate the accuracy of \acp{ddfem}, we compare \acp{ddfem} with \acp{csfem}, which are also four-field formulations for incompressible nonlinear elasticity. At the continuous level, the weak formulation of the 3D \ac{csfem}~\cite{Shojaei2019} reads: find $(\bu, \bK, \bP, p) \in \bH^1_{\bar{\bu}}(\Omega) \times \bbH(\bcurl,\Omega) \times \bbH(\bdiv,\Omega) \times \mathrm{L}^2(\Omega)$ such that 
\begin{align*}
  \twoprod{\bP,\bnabla \bv}{\Omega} + \alpha s_1(\bu, \bK, \bv) &= \twoprod{\rho_0 \bb,\bv}{\Omega} + \twoprod{\bar{\bt}, \bv}{\Gamma_t}, &&\forall \bv \in \bH^1_{\bzero}(\Omega), \\
  \twoprod{\widetilde{\bP}(\bK), \bkappa}{\Omega} + \twoprod{p\bQ(\bK),\bkappa}{\Omega} - \twoprod{\bP,\bkappa}{\Omega} + \alpha s_2(\bu, \bK, \bkappa) &= 0, &&\forall \bkappa \in \bbH(\bcurl, \Omega),\\
  \twoprod{\bnabla \bu, \bpsi}{\Omega} - \twoprod{\bK, \bpsi}{\Omega} &= 0, &&\forall \bpsi \in \bbH(\bdiv, \Omega), \\
  \twoprod{C(J),q}{\Omega} &=0, &&\forall q \in \mathrm{L}^2(\Omega).
\end{align*} 
The spaces $\bH^1_{\bar{\bu}}(\Omega)$ and $\bH^1_{\bzero}(\Omega)$ denote subspaces of $\bH^1(\Omega)$ consisting of displacement fields that satisfy prescribed and homogeneous displacement boundary conditions on $\Gamma_d$, respectively.
The stabilisation terms
\begin{equation*}
  s_1(\bu,\bK,\bv) \doteq \twoprod{\bnabla \bu - \bK, \bnabla \bv}{\Omega},\quad
  s_2(\bu,\bK,\bkappa) \doteq \twoprod{\bK - \bnabla \bu, \bkappa}{\Omega},
\end{equation*}
are weighted by the parameter $\alpha$. For 2D problems, such stabilisations are unnecessary, so $s_1$ and $s_2$ can be safely omitted~\cite{Shojaei2018}. 

Regarding feasible \ac{csfem} pairs, \pairthree{} and \pairfour{} are the best-performing first- and second-order choices in 2D~\cite{Shojaei2018}. In 3D, however, neither pair is admissible, and \pairfive{} was proposed in~\cite{Shojaei2019}. 
The notation for these \ac{csfem} pairs follows that of the \ac{ddfem} pairs and is detailed in \tab{tab:fesymbols}.
In our experiments, we use \pairthree{} and \pairfour{} for 2D examples and \pairfive{} with stabilisation parameter $\alpha = 10^5$ for 3D simulations. 

In addition to the choice of stable pairs, the principal differences between the \ac{fe} formulations of \acp{ddfem} and \acp{csfem} are summarised as follows:
\begin{itemize}
  \item \Acp{ddfem} employ discontinuous displacements, whereas \acp{csfem} use continuous displacement fields.
  \item \Acp{ddfem} utilise $\bdiv$-conforming displacement gradients, while \acp{csfem} adopt $\bcurl$-conforming displacement gradients.
  \item \Acp{ddfem} require continuous pressure fields, whereas \acp{csfem} can have discontinuous pressure fields.
  \item In \acp{ddfem} without displacement correction, displacement boundary conditions are imposed weakly and traction boundary conditions are imposed strongly; the opposite holds for \acp{csfem}.
\end{itemize}

\subsection{Examples}
Here we present a detailed comparison between \acp{ddfem} and \acp{csfem}. 
For simplicity, all experiments employ neo-Hookean materials with Lam{\'e} parameter $\mu = 1$, assuming no body forces, i.e., $\bb=\bzero$. 
Possible choices of incompressibility constraint functions $C$ are listed in~\eqref{eq:C}, with the corresponding $\bQ$ functions defined in~\eqref{eq:Qs}.
Depending on the specific problem, either $C_1$ or $C_2$, or both, are used. 
Both formulations are implemented in the open-source \ac{fem} framework \texttt{Gridap.jl}~\cite{Badia2020,Verdugo2022}, and the resulting nonlinear \ac{fe} systems are solved using Newton--Raphson's method with a Hager--Zhang line search and a tolerance of $10^{-9}$. 

\subsubsection{Radial inflation} \label{sec:exps:inf}
As an initial benchmark, we consider the radial inflation of a cylindrical shell in 2D and a hollow spherical ball in 3D, following the setups used in \acp{csfem}~\cite{Shojaei2018,Shojaei2019}.
The geometries and the prescribed displacement boundary conditions are illustrated in \fig{fig:inf_geo}. The inner and outer radii are $R_{\mathrm{in}}=0.5$ and $R_{\mathrm{out}}=1$, respectively. 
Displacement boundary conditions are imposed on the outer boundary for the shell and on the inner boundary for the ball, while all remaining boundaries are traction-free. In both the 2D and 3D cases, the displacement boundary data can be represented by the form 
\[
  \bar{\bu}=(\lambda - 1)\bX,
\]
where $\bX$ is the referential position vector and $\lambda \geq 1$ is the displacement intensity parameter.

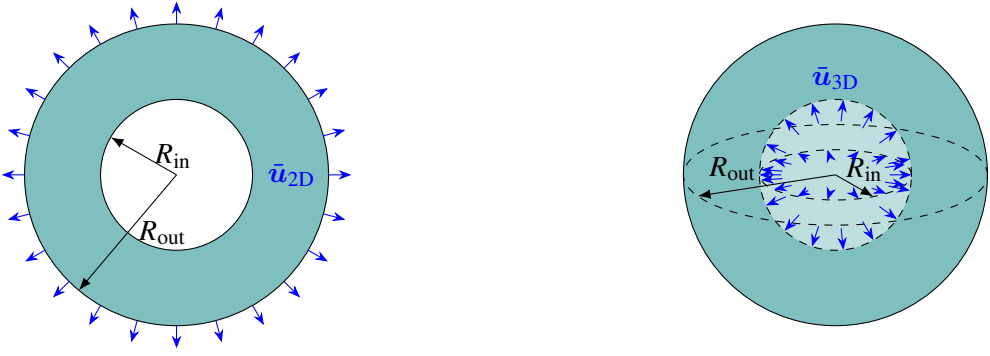
\begin{figure}[hbt!]
  \begin{subfigure}[h]{0.49\textwidth}
    \centering
    \begin{tikzpicture}[scale=2.]
      \def\Rin{0.5}
      \def\Rout{2*\Rin}

      \draw[fill=teal!50] (0,0) circle (\Rout);
      \draw[fill=white]   (0,0) circle (\Rin);

      \foreach \theta in {0,15,...,345}{
          \draw[blue,-Stealth]
          ({\Rout*cos(\theta)},{\Rout*sin(\theta)}) --
          ({1.15*\Rout*cos(\theta)},{1.15*\Rout*sin(\theta)});
        }
      \node[left] at (0.98*\Rout,0) {$\textcolor{blue}{\bar{\bu}_{\mathrm{2D}}}$};

      \draw[-{Latex}] (0,0) -- ({-\Rin*cos(30)}, {\Rin*sin(30)}) node[midway, right] {$R_{\mathrm{in}}$};
      \draw[-{Latex}] (0,0) -- (-{\Rout*cos(50)}, -{\Rout*sin(50)}) node[midway, right] {$R_{\mathrm{out}}$};
    \end{tikzpicture}
  \end{subfigure}
  \hfill
  \begin{subfigure}[h]{0.49\textwidth}
    \centering
    \begin{tikzpicture}[scale=2.]
      \def\Rin{0.5}
      \def\Rout{2*\Rin}

      \draw[fill=teal!50] (0,0) circle (\Rout);
      \draw[dashed,fill=teal!25] (0,0) circle (\Rin);
      \draw[dashed] (0,0) ellipse (\Rin cm and \Rin cm/3);
      \draw[dashed] (0,0) ellipse (\Rout cm and \Rout cm/3);

      \draw[-{Latex}] (0,0) -- node[near end,above] {$R_{\mathrm{in}}$} ({\Rin*cos(60)}, {-\Rin/3*sin(60)});
      \draw[-{Latex}] (0,0) -- node[near end,above]{$R_{\mathrm{out}}$} ({-\Rout*cos(25)}, {-\Rout/3*sin(25)});

      \foreach \angle in {12, 36, ..., 348} {
          \draw[blue,-Stealth]
          ({cos(\angle)*\Rin/1.4}, {sin(\angle)*\Rin/1.4}) --
          ({cos(\angle)*\Rin}, {sin(\angle)*\Rin});
        }
      \foreach \angle in {0, 24, ..., 336} {
          \draw[blue,-Stealth]
          ({cos(\angle)*\Rin/1.4}, {sin(\angle)*\Rin/4.2}) --
          ({cos(\angle)*\Rin}, {sin(\angle)*\Rin/3});
        }
      \node[above] at (0,\Rin) {$\textcolor{blue}{\bar{\bu}_{\mathrm{3D}}}$};
    \end{tikzpicture}
  \end{subfigure}

  \caption{Geometries and displacement boundary conditions for the inflation problems. Left: a 2D cylindrical shell; right: a 3D hollow spherical ball.}
  \label{fig:inf_geo}
\end{figure}

Let $d \in \{2,3\}$ denote the spatial dimension and define $R=\norm{\bX}$. The exact radial map is given by $r(R)=(R^d + (\lambda^d - 1)R^d_{\mathrm{out}})^{1/d}$, and in 3D we additionally define $g(R)=R(3r^3(R)+(\lambda^3-1)R^3_{\mathrm{out}})/r^4(R)$. The exact displacement field is then expressed as
\[
  \bu(\bX) = \left( \frac{r(R)}{R} - 1 \right) \bX,
\]
and the exact pressure reads
\[
  p(\bX) = 
    \begin{cases}
      -\mu\frac{R^2}{r^2(R)} + \frac{\mu(\lambda^2 - 1)R^2_{\mathrm{out}}}{2} \left(\frac{1}{r^2(R_{\mathrm{in}})} - \frac{1}{r^2(R)}\right) + \mu \ln\left(\frac{r(R_{\mathrm{in}}) R}{r(R) R_{\mathrm{in}}}\right) & \text{if $d=2$},\\
      -\mu\frac{R^4_{\mathrm{out}}}{r^4(R_{\mathrm{out}})} + \frac{\mu}{2}[g(R) - g(R_{\mathrm{out}})] & \text{if $d=3$}.
    \end{cases}
\]
We impose the incompressibility constraint using the function $C_1(J)=J-1$.
The exact displacement gradient is computed from $\bK = \bnabla \bu + \bbI$ and the exact first Piola--Kirchhoff stress is then given by $\bP = \tilde{\bP}(\bK) + p \bQ_1(\bK)$, where $\tilde{\bP}$ and $\bQ_1$ are defined in~\eqref{eq:tildeP} and~\eqref{eq:Q1}, respectively.
Due to symmetry, we model only one quarter of the 2D shell and one eighth of the 3D ball to reduce computational cost. The Cartesian mesh on $[0,0.5]^2$ is mapped to the upper-right quarter of the shell, while the one-eighth domain of the ball is discretised directly using Delaunay meshes. 

We set $\lambda=3$ and compare the \ac{ddfem} solutions directly against the exact ones; the corresponding error convergence curves are plotted in \fig{fig:inf_convs}, with reference slopes provided in the figure caption.
We first focus on the 2D convergences in \fig{fig:inf2d_conv}. For both \ac{ddfem} pairs \pairone{} and \pairtwo{}, the discontinuous displacement approximations converge at optimal rates. After postprocessing, the displacement errors reduce significantly while preserving the expected rates of the underlying continuous Lagrange elements, confirming both the accuracy of discontinuous displacement solutions and the effectiveness of the correction.
The displacement gradient also attains the expected convergence for both pairs.
The $\bbH(\bdiv,\Omega)$ errors, displayed in the third subfigure of \fig{fig:inf2d_conv}, exhibit super-convergence: for both pairs, the observed rates are one order higher than expected. This occurs because the body force is zero, i.e., $\bm{b}=\bzero$, implying $\bnabla \cdot \bP = \bzero$. Thus, the $\bbH(\bdiv,\Omega)$ error of $\bP_h$ coincides with its $\bbL^2(\Omega)$ error.
For the pressure, \pairone{} achieves the expected convergence rate while \pairtwo{} shows super-convergence, with a slope higher than the nominal order of three.

\begin{figure}[hbt!]
  \centering
  \begin{subfigure}{\textwidth}
    \centering
    \includegraphics[width=\textwidth]{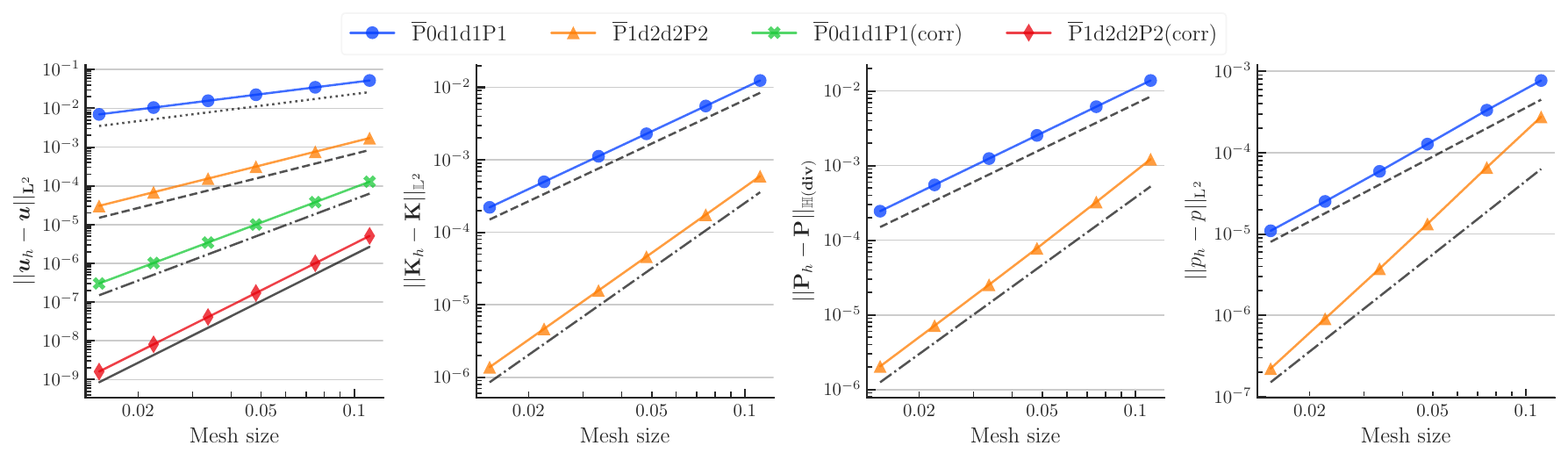}
    \caption{}
    \label{fig:inf2d_conv}
  \end{subfigure}
  \begin{subfigure}{\textwidth}
    \centering
    \includegraphics[width=\textwidth]{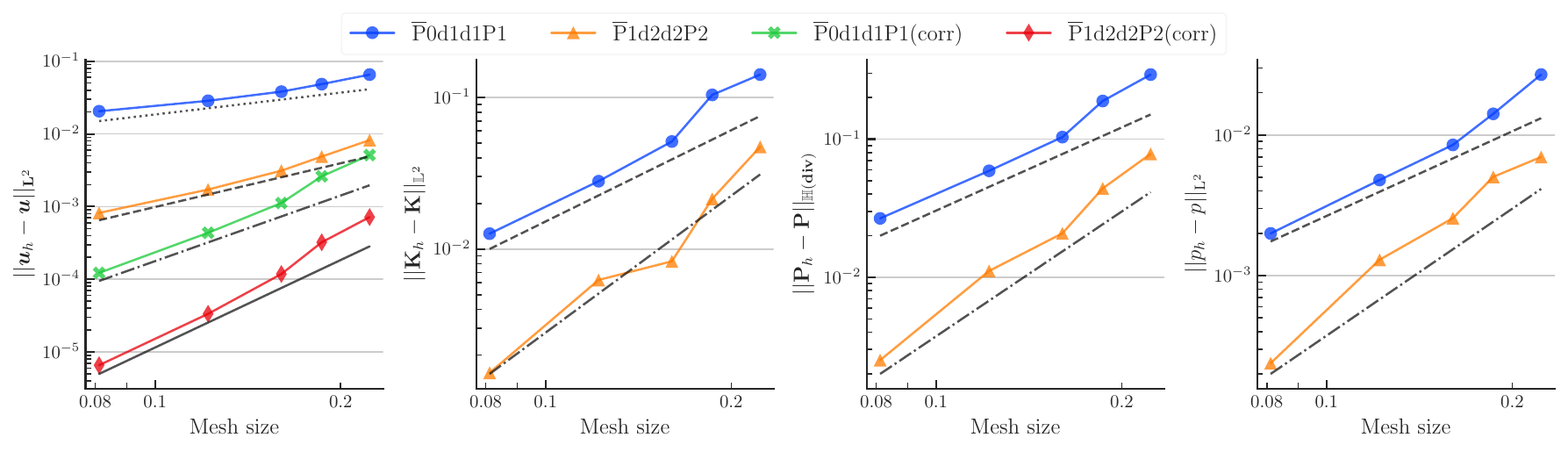}
    \caption{}
    \label{fig:inf3d_conv}
  \end{subfigure}
  \caption{Error convergence of the \ac{fe} solution errors versus mesh size for various \ac{fe} pairs in typical inflation problems. The displacement boundary data parameter is taken as $\lambda=3$. Reference slopes: 1 (dotted lines), 2 (dashed lines), 3 (dot-dashed lines), 4 (solid lines). Panels: (a) 2D results, (b) 3D results.}
  \label{fig:inf_convs}
\end{figure}

For the 3D convergence results in \fig{fig:inf3d_conv}, the observations closely mirror those in 2D. Convergence curves are slightly noisier, reflecting the added difficulty of discretising the domain with quasi-uniform 3D Delaunay meshes.
It is worth noting that the displacement correction remains effective in 3D, producing more accurate approximations while maintaining the expected super-convergence rates.
Overall, \fig{fig:inf_convs} shows that both \ac{ddfem} pairs deliver accurate solutions for the 2D radial inflation problem. More importantly, \acp{ddfem} extend robustly to 3D cases without any stabilisation or modification of standard \ac{fe} bases, whereas these are necessary in 3D \acp{csfem}~\cite{Shojaei2019}.
The convergences in \fig{fig:inf_convs} also show that \ac{ddfem} achieves the expected error estimates from~\eqref{eq:err_estimates},~\eqref{eq:kp_estimates} and~\eqref{eq:disp_l2_estimate} for all four fields plus the corrected displacement, and in some cases even surpasses them.

Figure~\ref{fig:inf2d_csfem_conv} displays the convergence results for the 2D inflation problem using the \ac{csfem} pairs \pairthree{} and \pairfour{}, computed on the same Cartesian meshes as in \fig{fig:inf2d_conv}.
Both pairs achieve the expected $\bH^1$ and $\mathrm{L}^2$ convergence rates for displacement and pressure, respectively. 
However, the displacement gradient converges one order suboptimally: order 1 instead of 2 for \pairthree{} and order 2 instead of 3 for \pairfour{}. The stress approximation is even less robust: \pairthree{} fails to reach first-order convergence, while \pairfour{} stagnates on coarse meshes before eventually recovering the expected rate.
Compared with \acp{ddfem}, \acp{csfem} are more sensitive to mesh quality. In particular, on Delaunay meshes, as reported in~\cite{Shojaei2018}, the displacement gradient convergence for \pairthree{} is slightly improved. In contrast, \ac{ddfem} pairs achieve expected convergence rates on both Cartesian and Delaunay meshes, as reflected in \fig{fig:inf_convs}.

\begin{figure}[hbt!]
  \centering
  \includegraphics[width=\textwidth]{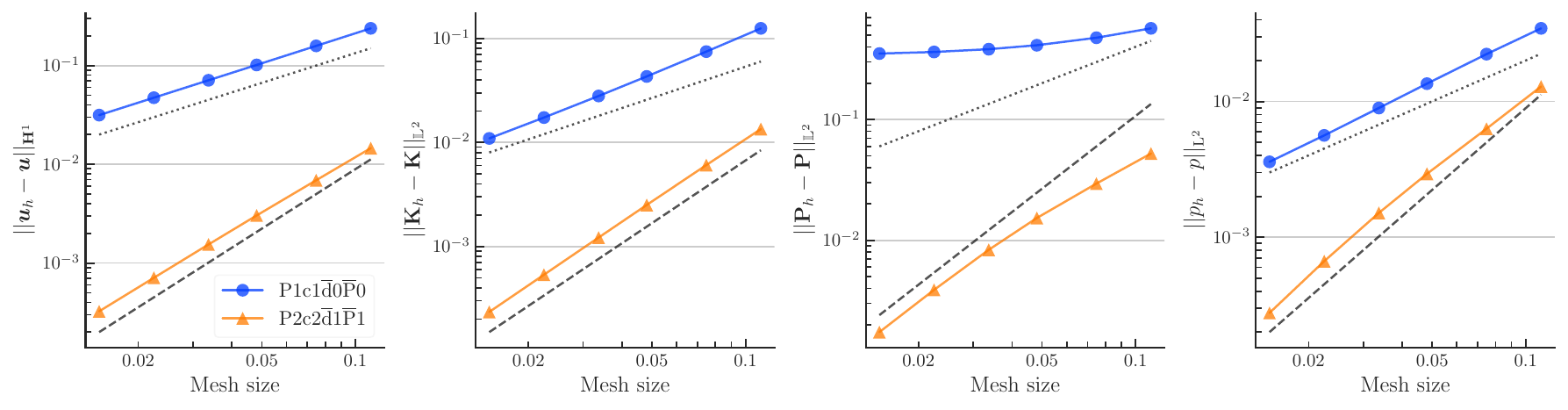}
  \caption{Error convergence of the \ac{csfem} solution errors versus mesh size in the 2D inflation problem. The displacement boundary intensity parameter is $\lambda=3$. Reference slopes: 1 (dotted lines), 2 (dashed lines).}
  \label{fig:inf2d_csfem_conv}
\end{figure}

\subsubsection{Cook's membrane} \label{sec:exps:mem}
The Cook's membrane problem is a classic benchmark for evaluating the performance of \acp{fem} in capturing combined bending and shear responses. We consider the 2D and 3D geometries shown in \fig{fig:mem_geo}, taken from~\cite{Shojaei2018} and~\cite{Shojaei2019}, respectively. The 3D membrane is formed by extruding the 2D membrane by 10 units along the $z$-direction. 
In 2D, the membrane is discretised structurally using $2\times n^2$ triangular elements. For the 3D case, we exploit symmetry of the domain and problem configuration and use Delaunay meshes to discretise only half of the membrane along the $z$-direction.
Regarding the boundary conditions, the left edge (2D) and face (3D) are clamped, while the right edge (2D) and face (3D) are subjected to, respectively, the following vertical tractions:
\[
  \bar{\bt}_{\mathrm{2D}} = (0, f)^{\tt t}, \quad \bar{\bt}_{\mathrm{3D}} = (0, f, 0)^{\tt t}.
\]
All other boundaries are traction-free. Due to the abrupt change of boundary condition types, singularities appear at the top-left vertex in 2D and along the corresponding edge in 3D.
As recommended in~\cite{Shojaei2018}, we employ $C_2$~\eqref{eq:C2} as the constraint function for both \acp{csfem} and \acp{ddfem}.

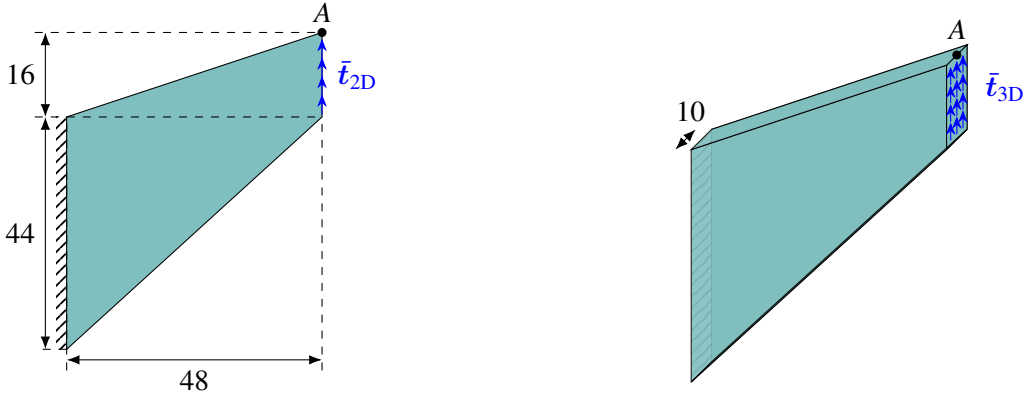
\begin{figure}[hbt!]
  \begin{subfigure}[h]{0.49\textwidth}
    \centering
    \begin{tikzpicture}[scale=0.07]
      \draw[fill=teal!50] (0,0) -- (48,44) -- (48,60) -- (0,44) -- cycle;
      \draw[pattern={Lines[distance=3,angle=45,line width=0.7]},draw=none] (-2,0) rectangle (0, 44);
      \draw[dashed] (-6,44) -- (48,44);
      \draw[dashed] (-6,60) -- (48,60);
      \draw[dashed] (48,-4) -- (48,44);
      \draw[dashed] (0,-4) -- (0,44);
      \draw[dashed] (0,0) -- (-6,0);
      \draw[fill] (48,60) circle [radius=0.75] node[above] {$A$};

      \draw[Latex-Latex] (-4,0) -- (-4,44) node[midway, left] {44};
      \draw[Latex-Latex] (-4,44) -- (-4,60) node[midway, left] {16};
      \draw[Latex-Latex] (0,-2) -- (48,-2) node[midway, below] {48};
      \foreach \y in {44.5, 48, 51.5, 55} {
          \draw[-Stealth,color=blue] (48,\y) -- (48,\y+3.8);
        }
      \node[right] at (49,52) {$\textcolor{blue}{\bar{\bt}_{\mathrm{2D}}}$};
    \end{tikzpicture}
  \end{subfigure}
  \hfill
  \begin{subfigure}[h]{0.49\textwidth}
    \centering
    \begin{tikzpicture}[scale=0.07]
      \draw[fill=teal!50, opacity=0.9] (0,0,0) -- (48,44,0) -- (48,60,0) -- (0,44,0) -- cycle;
      \draw[fill=teal!50, opacity=0.9] (0,0,10) -- (48,44,10) -- (48,60,10) -- (0,44,10) -- cycle;
      \draw[fill=teal!50, opacity=0.9] (48,44,10) -- (48,44,0) -- (48,60,0) -- (48,60,10) -- cycle;
      \draw[fill=teal!50, opacity=0.9] (48,60,10) -- (48,60,0) -- (0,44,0)  -- (0,44,10) -- cycle;
      \draw[fill=teal!50, opacity=0.9] (0,0,0) -- (0,0,10) -- (48,44,10)  -- (48,44,0) -- cycle;
      \draw[pattern={Lines[distance=3,angle=45,line width=0.7]},opacity=0.3,draw=none] (0,0,0) -- (0,0,10) -- (0,44,10) -- (0,44,0) -- cycle;
      \draw[fill] (48,60,5) circle [radius=0.75] node[above=2pt] {$A$};
      \draw[Latex-Latex] (-3,44,10) -- (-3,44,0) node[near end,above=1pt] {10};
      \foreach \y in {45, 48.5, 52, 55.5} {
        \foreach \z in {2, 5, 8} {
          \draw[-Stealth,color=blue] (48,\y,\z) -- (48,\y+3.4,\z);
          }
        }
      \node[right] at (49.5,52,0) {$\textcolor{blue}{\bar{\bt}_{\mathrm{3D}}}$};
    \end{tikzpicture}
  \end{subfigure}

  \caption{Geometries and traction boundary conditions for the Cook's membrane problems. Left: a 2D membrane; right: a 3D membrane.}
  \label{fig:mem_geo}
\end{figure}

Table~\ref{tab:mem2d_disp} reports the tip deflection at point~$A$ for the 2D Cook's membrane. The first two rows correspond to \ac{ddfem} pairs, followed by their displacement-corrected counterparts, while the last two rows correspond to the \ac{csfem} pairs.
For a very coarse mesh ($n=6$), the deflection from \pairone{} shows a slight deviation, but it converges steadily to high-order solutions as the mesh is refined. After applying the displacement correction in \sect{sec:fe:ddcorr}, the \pairone{}(corr) results closely match those of \pairthree{} and \pairfour{}.
The high-order \ac{ddfem} pair \pairtwo{} produces consistent deflections that agree well with the \ac{csfem} results, and its corrected values, denoted by \pairtwo{}(corr), show little further improvement.
Notably, on the finest mesh ($n=48$), the high-order \ac{csfem} pair \pairfour{} fails to converge for large traction ($f=0.4$), which is consistent with the observations reported in~\cite{Fu2025}.

\begin{table}[hbt!]
  \centering
{\small  \begin{tabular}{C{2.5cm} C{0.7cm} C{2.3cm} C{2.3cm} C{0.7cm} C{2.3cm} C{2.3cm}}
    \toprule
    \multirow{2}{*}{Pair}
      & \multicolumn{3}{c}{$f = 0.2$}
      & \multicolumn{3}{c}{$f = 0.4$} \\
    \cmidrule(lr){2-4} \cmidrule(lr){5-7}
      & $n$ & $\bu_x(\bX_A)$ & $\bu_y(\bX_A)$
      & $n$ & $\bu_x(\bX_A)$ & $\bu_y(\bX_A)$ \\
    \midrule
    \multirow{4}{*}{\pairone{}}
      &  6 & -12.0410 & 12.9525 &  6 & -20.0804 & 20.0880 \\
      & 12 & -12.8011 & 13.5491 & 12 & -21.2474 & 20.8351 \\
      & 24 & -13.2271 & 13.8653 & 24 & -21.9595 & 21.2735 \\
      & 48 & -13.4641 & 14.0320 & 48 & -22.3480 & 21.4985 \\
    \midrule

    \multirow{4}{*}{\pairtwo{}}
      &  6 & -13.6393 & 14.1568 &  6 & -22.5085 & 21.6388 \\
      & 12 & -13.6878 & 14.1896 & 12 & -22.5591 & 21.6425 \\
      & 24 & -13.7104 & 14.1988 & 24 & -22.8951 & 21.8188 \\
      & 48 & -13.7249 & 14.2044 & 48 & -22.6051 & 21.6443 \\
    \midrule

    \multirow{4}{*}{\pairone{}(corr)}
      &  6 & -14.2161 & 14.1681 &  6 & -22.8054 & 21.6237 \\
      & 12 & -13.9479 & 14.1875 & 12 & -22.6705 & 21.6312 \\
      & 24 & -13.8282 & 14.1967 & 24 & -22.6967 & 21.6829 \\
      & 48 & -13.7775 & 14.2027 & 48 & -22.7294 & 21.7081 \\
    \midrule

    \multirow{4}{*}{\pairtwo{}(corr)}
      &  6 & -14.0563 & 14.1492 &  6 & -22.7314 & 21.6333 \\
      & 12 & -13.8711 & 14.1790 & 12 & -22.6474 & 21.6390 \\
      & 24 & -13.7904 & 14.1915 & 24 & -22.9333 & 21.8209 \\
      & 48 & -13.7598 & 14.2002 & 48 & -22.6209 & 21.6438 \\
    \midrule

    \multirow{4}{*}{\pairthree{}}
      &  6 & -13.4500 & 14.0621 &  6 & -22.2027 & 21.2951 \\
      & 12 & -13.5719 & 14.1252 & 12 & -22.2393 & 21.3996 \\
      & 24 & -13.6271 & 14.1524 & 24 & -22.2966 & 21.4585 \\
      & 48 & -13.6620 & 14.1651 & 48 & -22.3751 & 21.5034 \\
    \midrule

    \multirow{4}{*}{\pairfour{}}
      &  6 & -13.5575 & 14.1325 &  6 & -22.2118 & 21.4396 \\
      & 12 & -13.6271 & 14.1506 & 12 & -22.2911 & 21.4642 \\
      & 24 & -13.6635 & 14.1625 & 24 & -22.3571 & 21.4964 \\
      & 48 & -13.6846 & 14.1716 & 48 & Failed & Failed \\
    \bottomrule
  \end{tabular}}
  \caption{Deflection of the tip $A$ for the 2D Cook's membrane problem under different traction boundary data $f$ for various element pairs.}
  \label{tab:mem2d_disp}
\end{table}

The \ac{fe} solutions on the deformed geometry are presented in \fig{fig:mem2d_results}, computed on a mesh of $2\times 24^2$ triangles. The top and bottom rows correspond to the high-order pairs \pairtwo{}(corr) and \pairfour{}, respectively. Overall, the solution patterns produced by \pairtwo{}(corr) closely resemble those of \pairfour{}, with only minor differences.
Several notable nuances are revealed after closer inspection.
Firstly, near the singular vertex, \pairtwo{} produces more distorted and rotated elements than \pairfour{}, a behaviour that is physically more consistent with the expected large deformation in this region.
Secondly, in the stress solutions (second column of \fig{fig:mem2d_results}), \pairfour{} exhibits visible checkerboarding around the singular vertex, despite not employing the lowest-order stress and pressure \ac{fe} pair. In contrast, the \pairtwo{} stress solution shows no such numerical artefacts.
Lastly, the overall stress solution obtained with \pairtwo{} appears smoother and more consistent with the expected stress distribution, suggesting a more natural and robust approximation than that provided by \pairfour{}.  

\begin{figure}[hbt!]
  \centering
    \begin{subfigure}{0.32\textwidth}
    \centering
    \includegraphics[width=\textwidth]{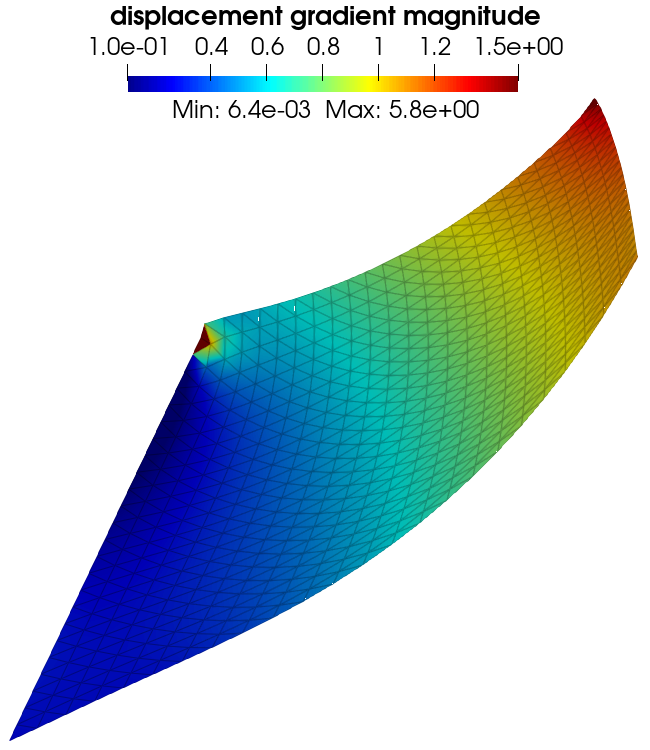}    
  \end{subfigure}
  \begin{subfigure}{0.32\textwidth}
    \centering
    \includegraphics[width=\textwidth]{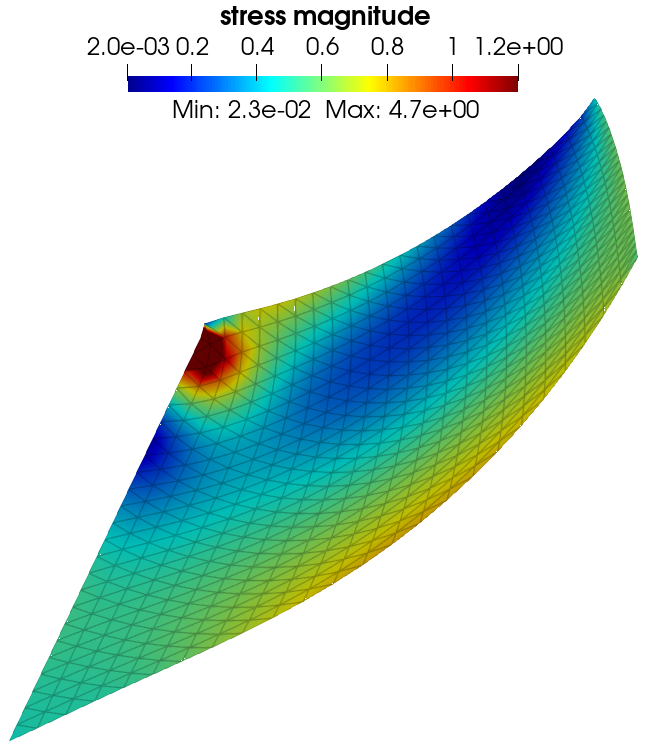}    
  \end{subfigure}
  \begin{subfigure}{0.32\textwidth}
    \centering
    \includegraphics[width=\textwidth]{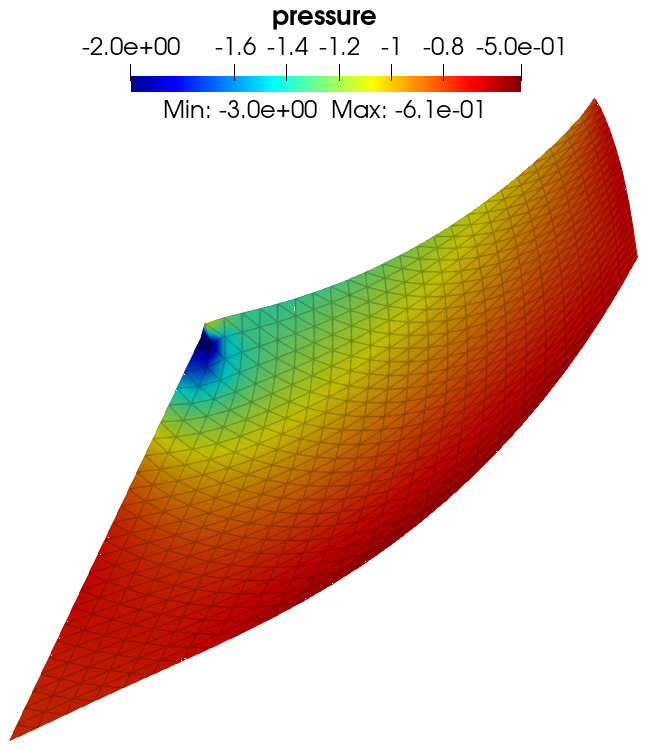}    
  \end{subfigure}

  \begin{subfigure}{0.32\textwidth}
    \centering
    \includegraphics[width=\textwidth]{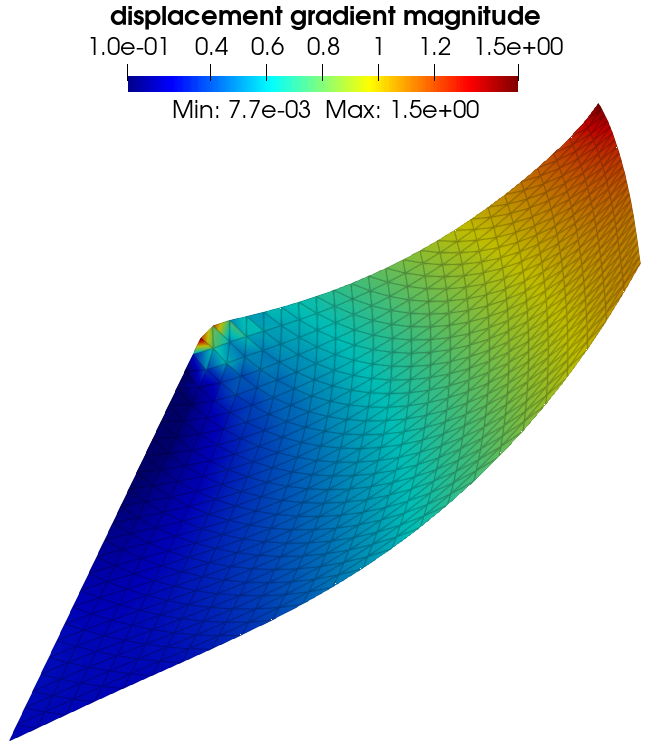}    
  \end{subfigure}
  \begin{subfigure}{0.32\textwidth}
    \centering
    \includegraphics[width=\textwidth]{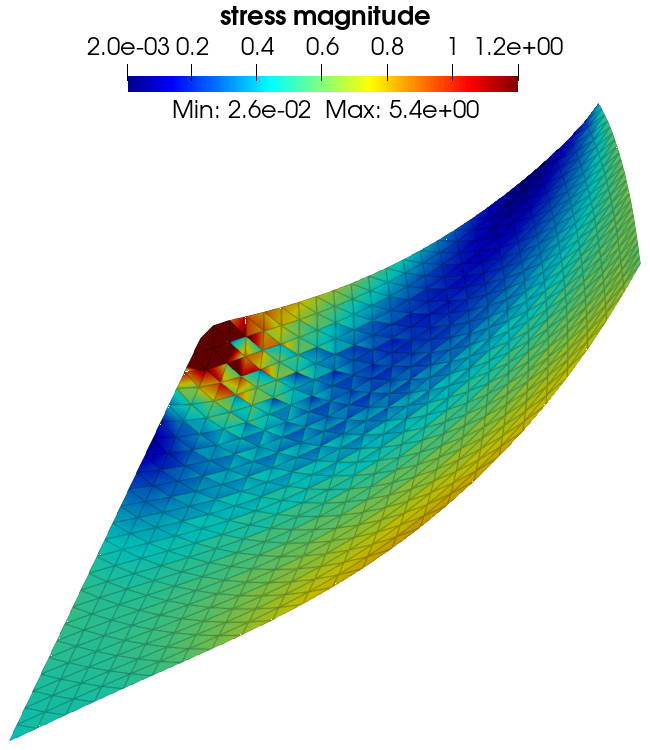}    
  \end{subfigure}
  \begin{subfigure}{0.32\textwidth}
    \centering
    \includegraphics[width=\textwidth]{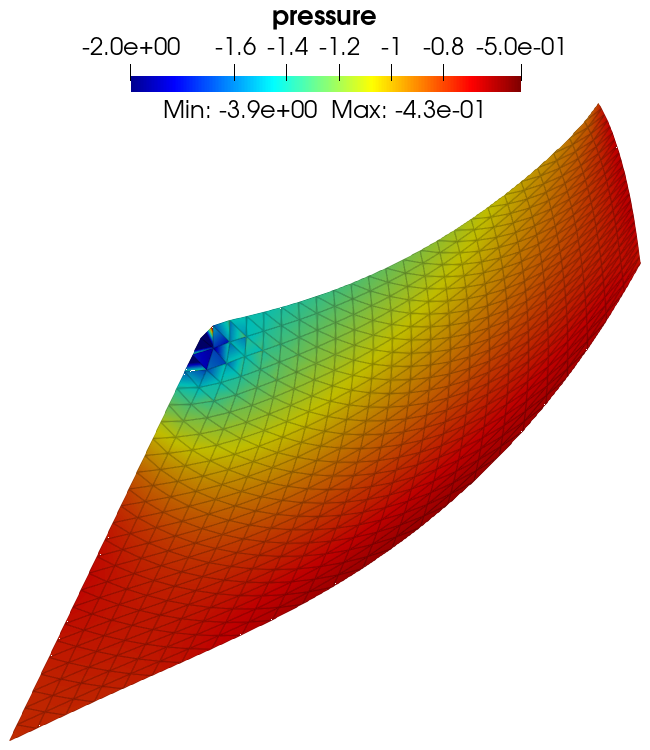}    
  \end{subfigure}
  
  \caption{\ac{fe} solutions on the deformed 2D Cook's membrane for traction boundary data $f=0.4$ and mesh parameter $n=24$. Columns show (1) displacement gradient magnitude, (2) stress magnitude and (3) pressure. Top row: \pairtwo{}(corr); bottom row: \pairfour{}.}
  \label{fig:mem2d_results}
\end{figure}

For the 3D Cook's membrane, the deflection at point~$A$ versus element count is shown in \fig{fig:mem3d_disp}. As in the 2D case, the results obtained with \pairtwo{} closely match those of the 3D \ac{csfem} pair \pairfive{} across all mesh resolutions. 
The low-order \ac{ddfem} pair \pairone{} converges toward the \pairfive{} response as the mesh is refined. Owing to the challenges associated with 3D quasi-uniform meshing, its convergence curves are noticeably noisier than in 2D. Nevertheless, after postprocessing, the \pairone{}(corr) curves become visually indistinguishable from those of \pairfive{}, indicating highly accurate displacement predictions. As already shown in \tab{tab:mem2d_disp}, the corrected \pairtwo{} displacements are nearly identical to the original solutions and are therefore omitted from the figure.
Figure~\ref{fig:mem3d_results} shows the \ac{fe} solution fields for \pairone{}(corr) and \pairfive{} on the finest mesh used in \fig{fig:mem3d_disp}.
A direct comparison between \pairtwo{} and \pairfive{} is not meaningful, since, as mentioned in \sect{sec:fe:fe}, \pairtwo{} utilises high-order \ac{bdm} elements and requires significantly more \acp{dof}. The deformed geometry for the \ac{ddfem} results is visualised using the corrected displacement to enhance clarity.
Once again, we observe similar patterns in the \ac{fe} solutions obtained with \pairone{} and \pairfive{}. However, the stress field produced by \pairone{} is smoother and more physically natural, owing to the richer \acp{fe} employed for the stress approximation.

\begin{figure}[hbt!]
  \centering
  \includegraphics[width=0.75\textwidth]{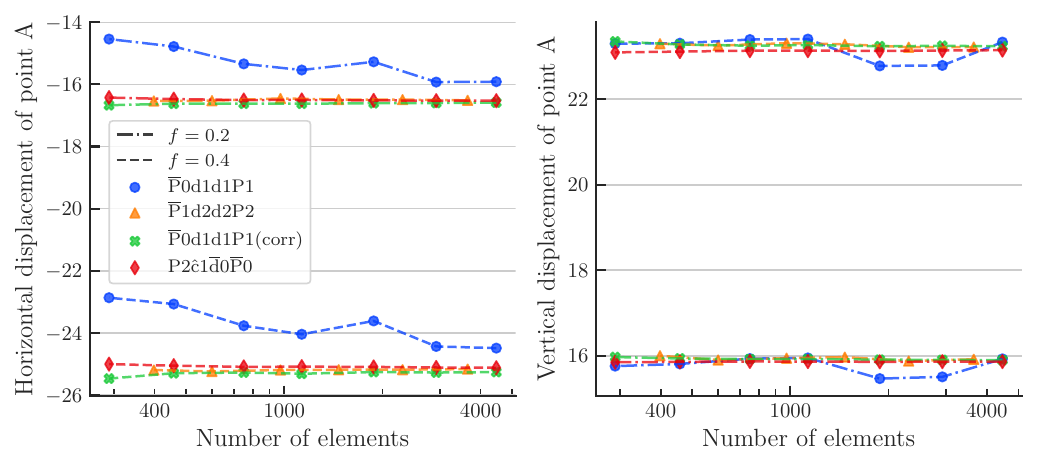}
  \caption{Deflection of the point $A$ for the 3D Cook's membrane problem under different traction boundary data $f$ for various pairs.}
  \label{fig:mem3d_disp}
\end{figure}

\begin{figure}[hbt!]
  \centering
    \begin{subfigure}{0.32\textwidth}
    \centering
    \includegraphics[width=\textwidth]{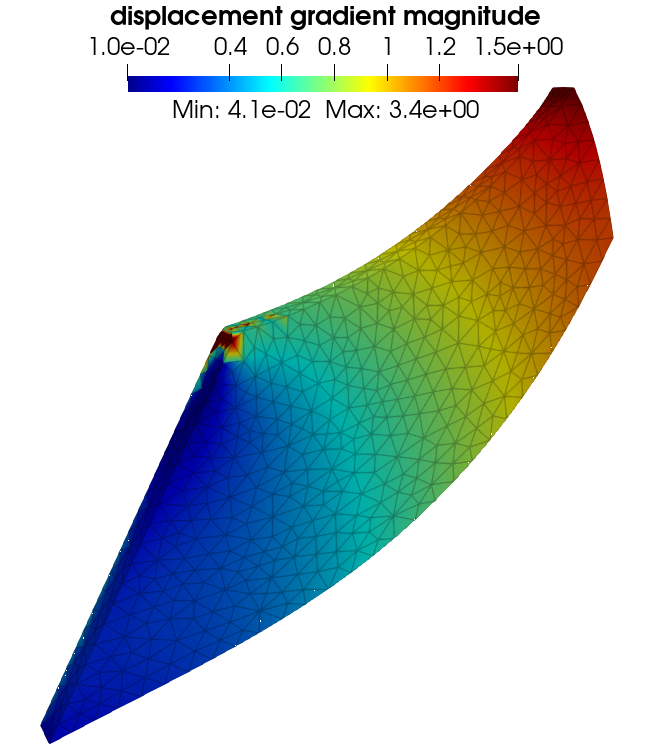}    
  \end{subfigure}
  \begin{subfigure}{0.32\textwidth}
    \centering
    \includegraphics[width=\textwidth]{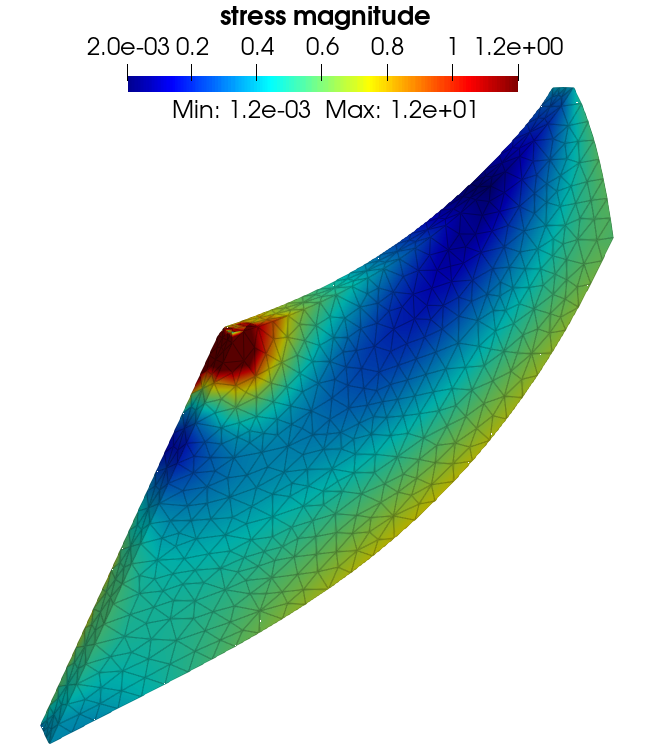}    
  \end{subfigure}
  \begin{subfigure}{0.32\textwidth}
    \centering
    \includegraphics[width=\textwidth]{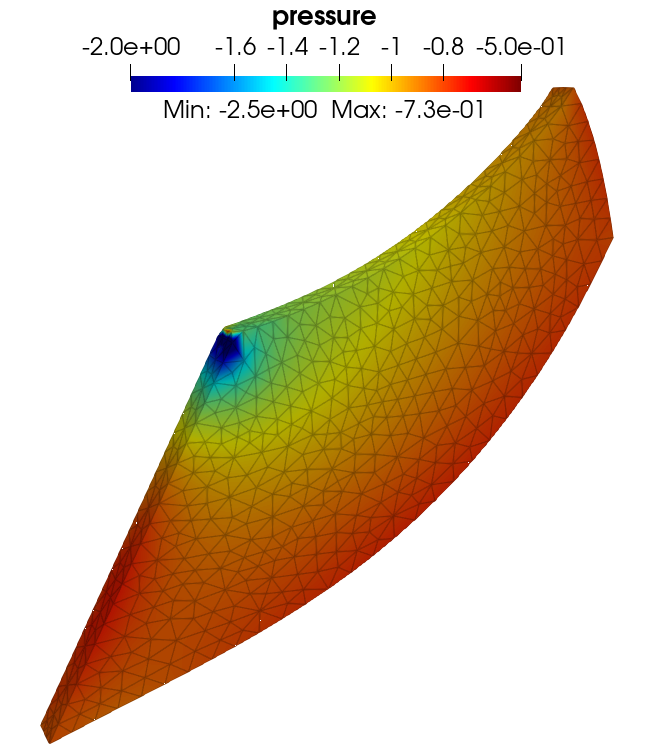}    
  \end{subfigure}

  \begin{subfigure}{0.32\textwidth}
    \centering
    \includegraphics[width=\textwidth]{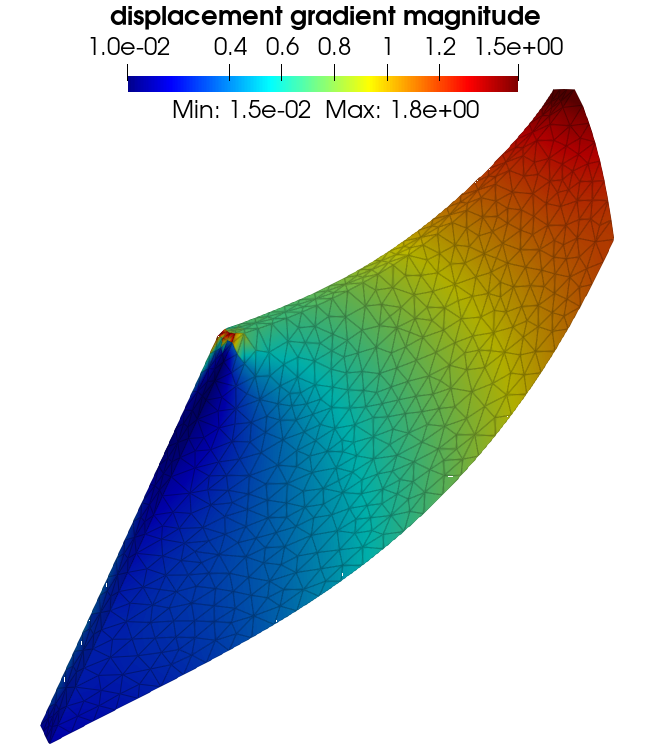}    
  \end{subfigure}
  \begin{subfigure}{0.32\textwidth}
    \centering
    \includegraphics[width=\textwidth]{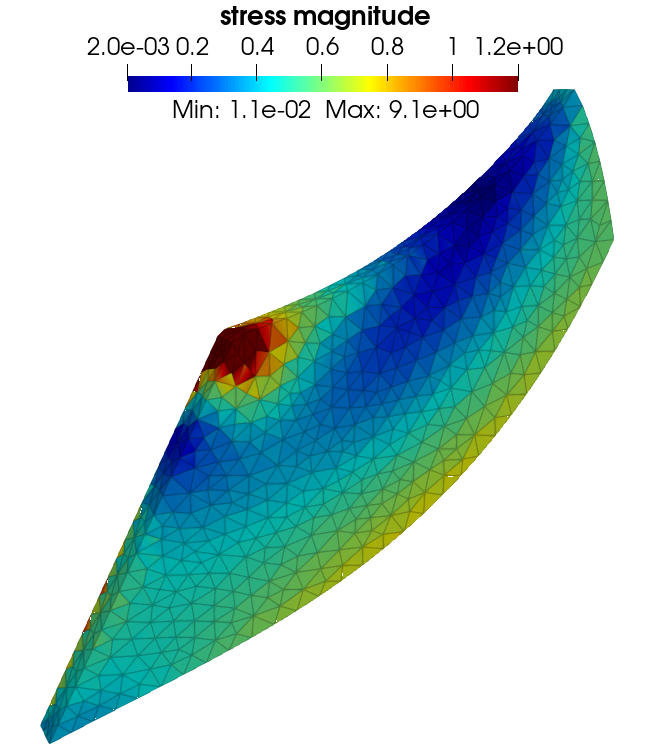}    
  \end{subfigure}
  \begin{subfigure}{0.32\textwidth}
    \centering
    \includegraphics[width=\textwidth]{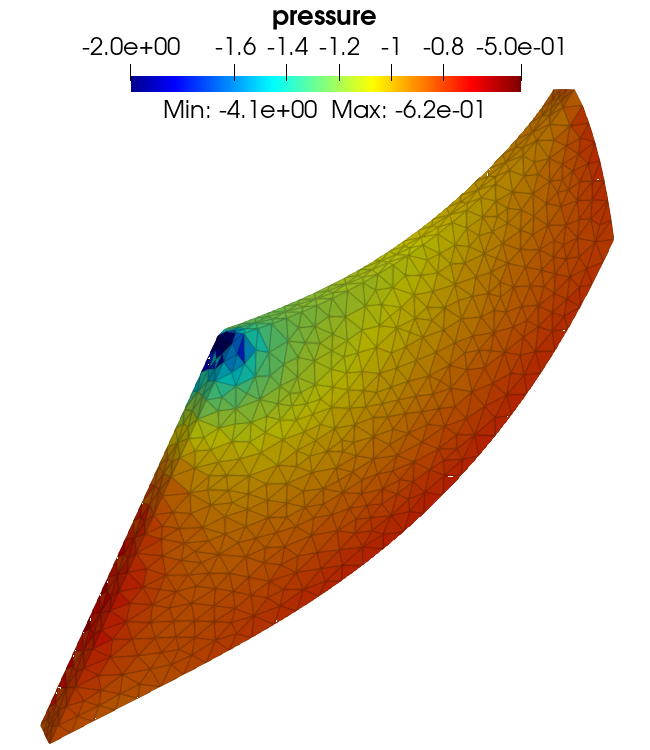}    
  \end{subfigure}
  
  \caption{\ac{fe} solutions on the deformed 3D Cook's membrane for traction boundary data $f=0.4$. Columns show (1) displacement gradient magnitude, (2) stress magnitude and (3) pressure. Top row: \pairone{}(corr); bottom row: \pairfive{}.}
  \label{fig:mem3d_results}
\end{figure}

It is also insightful to examine how the $L^2$ norms of the \ac{fe} solutions change as the mesh becomes finer, which offers a global comparison between the two methods. The $L^2$ norms for different pairs and traction levels $f$ are illustrated in \fig{fig:mem3d_norms}. 
The norms of the corrected displacements are nearly identical to those of the original \ac{ddfem} displacements; for clarity, the corrected curves are therefore omitted in the first subfigure.
For the \ac{ddfem} pairs \pairone{} and \pairtwo{}, the convergence curves are almost flat, indicating robust and consistent solutions even on coarse meshes. Interestingly, despite using a piecewise-constant space for the displacement in \pairone{}, the overall \ac{fe} solutions remain accurate.
In contrast, for \pairfive{}, the stress and pressure \ac{fe} solutions are inaccurate on coarse meshes, indicating that the 3D \ac{csfem} employs stress and pressure \ac{fe} spaces that are not sufficiently rich.

\begin{figure}[hbt!]
  \centering
  \includegraphics[width=\textwidth]{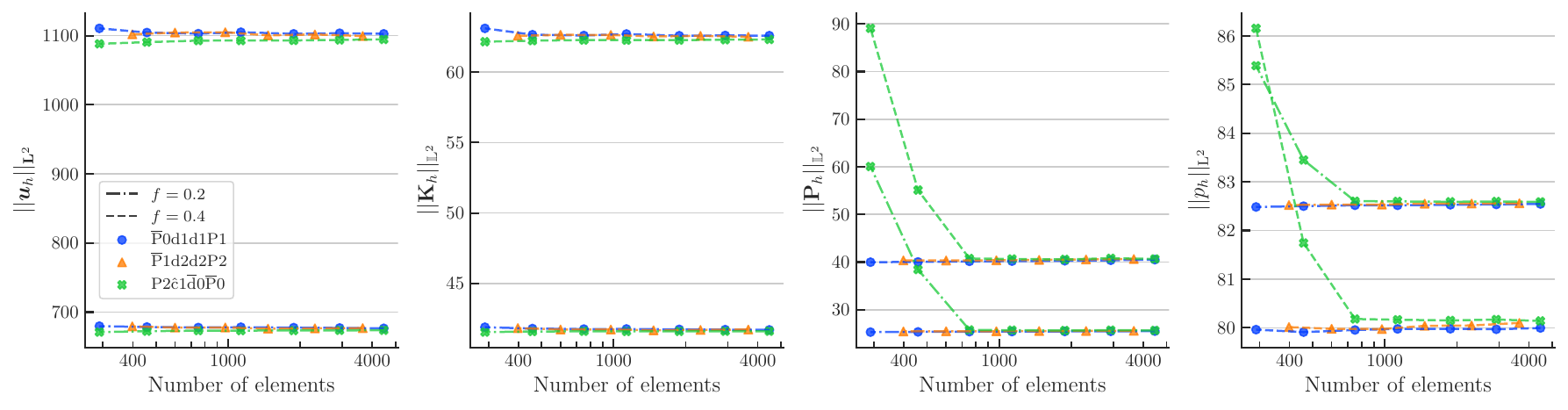}
  \caption{$L^2$ norms of displacement, displacement gradient, stress and pressure versus number of elements for the 3D Cook's membrane problem under different traction boundary data $f$ for various pairs.}
  \label{fig:mem3d_norms}
\end{figure}

\subsubsection{Stretching of perforated blocks} \label{sec:exps:stre}
As the final set of examples, we consider the stretching of blocks with holes. The 2D and 3D geometries are shown in \fig{fig:stre_geo}. Both the 2D square and 3D cube have side length 1. In 2D, the square contains four larger circular holes with diameter 0.2 and four smaller circular holes with diameter 0.1. The centres of the larger circles are located at: $(0.2,0.15)$, $(0.25,0.55)$, $(0.8,0.2)$, and $(0.8,0.75)$, while the smaller circles are centred at: $(0.15,0.85)$, $(0.45,0.8)$, $(0.5,0.25)$, and $(0.6,0.5)$. The left edge of the square is fixed, the right edge is subjected to a horizontal displacement boundary condition $\bar{\bu}_{\mathrm{2D}}$, and the remaining edges are traction-free.
In 3D, the cube has a spherical hole of diameter 0.6 at its centre. The left and right faces are subjected to equal and opposite horizontal displacement boundary conditions $\bar{\bu}_{\mathrm{3D}}$, and the other faces are traction-free. Again, owing to the symmetry of the problem, only one-eighth of the cube is modelled.
The incompressibility constraint is enforced using $C_1$~\eqref{eq:C1} for \acp{csfem}.
The displacement boundary data can be expressed as
\begin{equation*}
  \bar{\bu}_{\mathrm{2D}} = \begin{cases}
    (0, 0)^{\tt t} &\text{on the left edge},\\
    (u, 0)^{\tt t} &\text{on the right edge},
  \end{cases} \quad
  \bar{\bu}_{\mathrm{3D}} = \begin{cases}
    (-u, 0, 0)^{\tt t} &\text{on the left face},\\
    (u, 0, 0)^{\tt t} &\text{on the right face}.
  \end{cases}
\end{equation*}

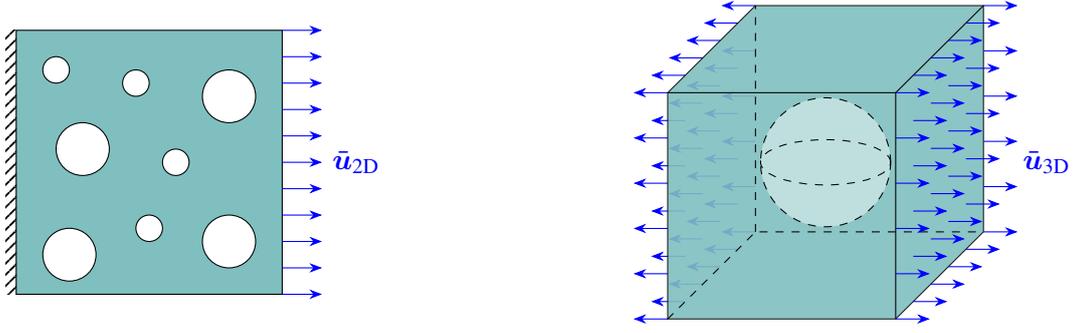
\begin{figure}[hbt!]
  \begin{subfigure}[h]{0.49\textwidth}
    \centering
    \begin{tikzpicture}[scale=3.5]
      \def\sr{0.05}
      \def\lr{0.1}

      \draw[fill=teal!50] (0,0) rectangle (1,1);
      \draw[fill=white] (0.2,0.15) circle(\lr);
      \draw[fill=white] (0.15, 0.85) circle(\sr);
      \draw[fill=white] (0.45, 0.8) circle(\sr);
      \draw[fill=white] (0.5, 0.25) circle(\sr);
      \draw[fill=white] (0.6, 0.5) circle(\sr);
      \draw[fill=white] (0.25, 0.55) circle(\lr);
      \draw[fill=white] (0.8, 0.2) circle(\lr);
      \draw[fill=white] (0.8, 0.75) circle(\lr);

      \draw[pattern={Lines[distance=3,angle=45,line width=0.7]},draw=none] (0,0) -- (-0.04,0) -- (-0.04,1) -- (0,1) -- cycle;
      \foreach \y in {0, 0.1, 0.2, ..., 1}{
        \draw[-Stealth,color=blue] (1,\y) -- (1.15,\y);
      }
      \node[right] at (1.15,0.5) {$\textcolor{blue}{\bar{\bu}_{\mathrm{2D}}}$};
    \end{tikzpicture}
  \end{subfigure}
  \hfill
  \begin{subfigure}[h]{0.49\textwidth}
    \centering
    \begin{tikzpicture}[scale=3.]
      \foreach \y in {0, 0.2, 0.4, ..., 1} {
        \foreach \z in {0, 0.2, 0.4, ..., 1} {
          \draw[-Stealth,color=blue] (0,\y,\z) -- (-0.15,\y,\z);
        }
      }
      \draw[fill=teal!50,opacity=0.9] (0,1,0) -- (1,1,0) -- (1,0,0) -- (1,0,1) -- (0,0,1) -- (0,1,1) -- cycle;
      \draw (1,1,1) -- (0,1,1) (1,1,1) -- (1,0,1) (1,1,1) -- (1,1,0);
      \draw[dashed] (0,0,1) -- (0,0,0) -- (1,0,0);
      \draw[dashed] (0,0,0) -- (0,1,0);
      \draw[dashed,fill=teal!25] (0.5,0.5,0.5) circle (0.285);
      \draw[dashed] (0.5,0.5,0.5) ellipse (0.285 cm and 0.1 cm);
      \foreach \y in {0, 0.2, 0.4, ..., 1} {
        \foreach \z in {0, 0.2, 0.4, ..., 1} {
          \draw[-Stealth,color=blue] (1,\y,\z) -- (1.15,\y,\z);
        }
      }
      \node[right] at (1.32,0.5,0.5) {$\textcolor{blue}{\bar{\bu}_{\mathrm{3D}}}$};
    \end{tikzpicture}
  \end{subfigure}

  \caption{Geometries and displacement boundary conditions for the stretching problems. Left: a 2D square with randomly distributed holes; right: a 3D cube with a central hollow sphere.}
  \label{fig:stre_geo}
\end{figure}

The \ac{fe} solutions on the deformed square are shown in \fig{fig:stre2d_results}, with \pairtwo{}(corr) in the top row and \pairfour{} in the bottom. The Delaunay mesh contains 3,344 elements with finer cells around the smaller holes.
Visually, the \pairfour{} displacement closely matches the corrected \pairtwo{} solution.
Overall, the displacement gradient solutions (first column of \fig{fig:stre2d_results}) obtained by the two pairs exhibit very similar patterns, except near the midpoint of the right edge, where notable differences occur. 
To further investigate this dissimilarity, we present the Jacobian determinant solutions computed by $J_h = \det(\bK_h + \bbI)$ in the second column of \fig{fig:stre2d_results}. We observe that the \pairfour{} Jacobian determinant becomes very small in that region, with the minimum value turning negative (nonphysical). 
In contrast, the \pairtwo{} Jacobian determinant remains close to 1 throughout most of the domain, with only minor regions showing relatively large or small values, but never negative.
Regarding the stress solutions, similar to the observations in \sect{sec:exps:mem}, \pairtwo{} produces smoother and more natural \ac{fe} solutions than \pairfour{}.
As expected, the continuous pressure solution by \pairtwo{} is also smoother compared to the discontinuous pressure solution of \pairfour{}.

\begin{figure}[hbt!]
  \centering
    \begin{subfigure}{0.245\textwidth}
    \centering
    \includegraphics[width=\textwidth]{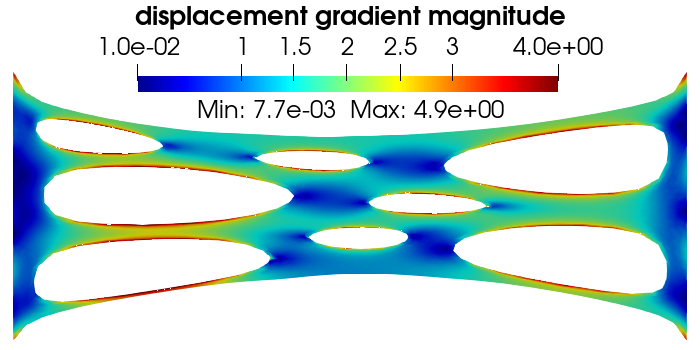}    
  \end{subfigure}
  \begin{subfigure}{0.245\textwidth}
    \centering
     \includegraphics[width=\textwidth]{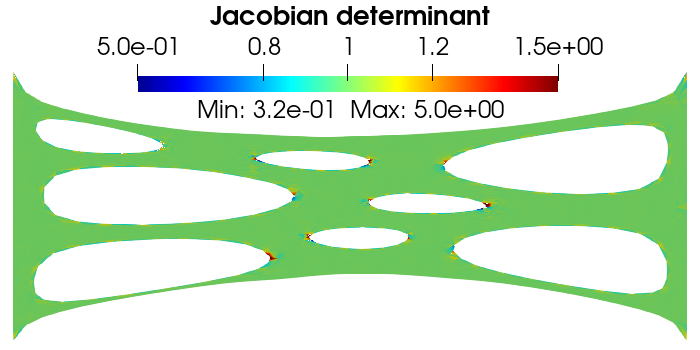}   
      \end{subfigure}
     \begin{subfigure}{0.245\textwidth}
    \centering
    \includegraphics[width=\textwidth]{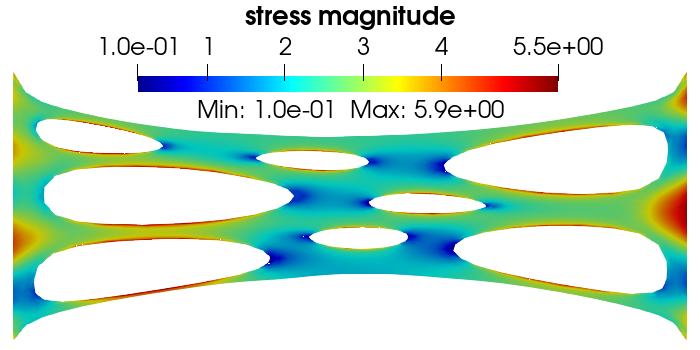}    
  \end{subfigure}
  \begin{subfigure}{0.245\textwidth}
    \centering
     \includegraphics[width=\textwidth]{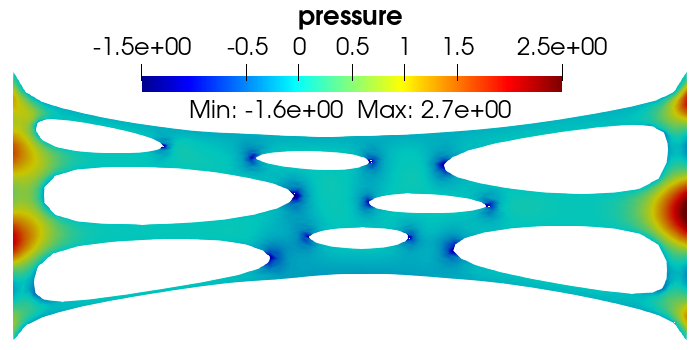}   
      \end{subfigure}
     
    \begin{subfigure}{0.245\textwidth}
    \centering
    \includegraphics[width=\textwidth]{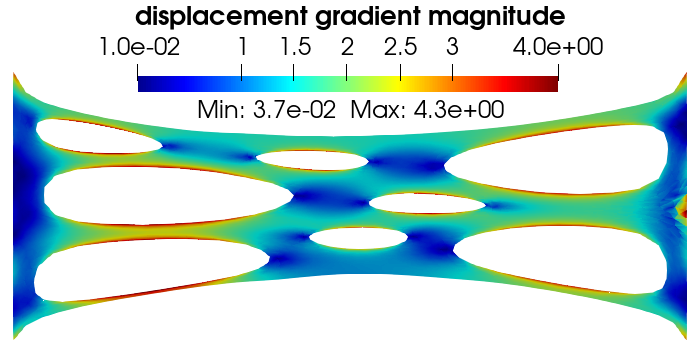}    
  \end{subfigure}
  \begin{subfigure}{0.245\textwidth}
    \centering
     \includegraphics[width=\textwidth]{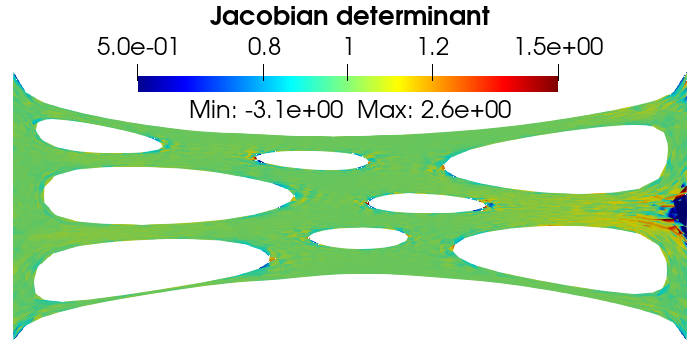}   
      \end{subfigure}
     \begin{subfigure}{0.245\textwidth}
    \centering
    \includegraphics[width=\textwidth]{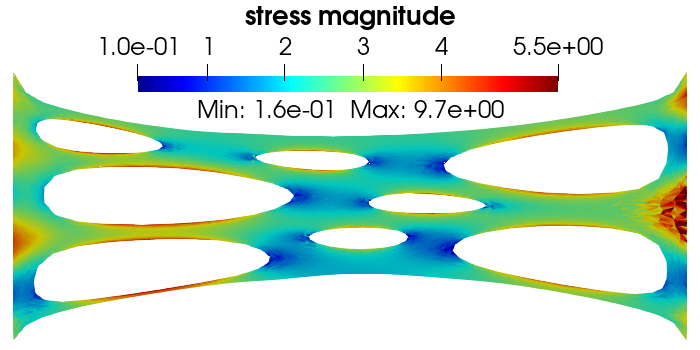}    
  \end{subfigure}
  \begin{subfigure}{0.245\textwidth}
    \centering
     \includegraphics[width=\textwidth]{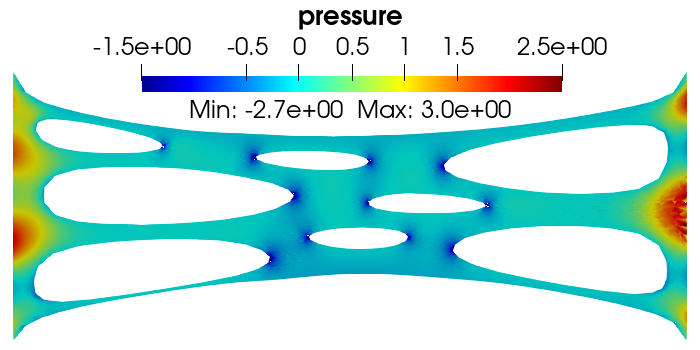}   
      \end{subfigure}

  \caption{\ac{fe} solutions obtained by different pairs on the deformed 2D perforated block. The displacement boundary data $u=1.5$. Columns show (1) displacement gradient magnitude, (2) Jacobian, (3) stress magnitude and (4) pressure. Top row: \pairtwo{}(corr); bottom row: \pairfour{}.}
  \label{fig:stre2d_results}
\end{figure}

We examine the projected Jacobian determinants as box plots in \fig{fig:stre2d_jacs}. 
To obtain the $\mathrm{L}^2$-projection of the Jacobian, we first solve the 2D stretching problem using all four element pairs on the same Delaunay mesh consisting of 3,344 triangular elements. We then project the computed \ac{fe} Jacobian $J_h = \det(\bK_h + \bbI)$ onto a discontinuous Lagrange \ac{fe} space ($\overline{\mathrm{H}}_{k,h}$). 
The order of the projection space matches the order of the \ac{fe} space used for the displacement gradient: we use $\overline{\mathrm{H}}_{1,h}$ for projecting \pairone{} and \pairthree{} Jacobian determinants, and $\overline{\mathrm{H}}_{2,h}$ for \pairtwo{} and \pairfour{}. Finally, we plot the nodal values of the projected Jacobian determinant for each pair as box plots in \fig{fig:stre2d_jacs}. 
The whiskers indicate the minimum and maximum values lying within 1.5 times the interquartile range. Due to the geometric complexity, it is natural that the box plots for all element pairs exhibit numerous fliers. 
The box plots clearly indicate that \pairtwo{} preserves volume best, with its Jacobian determinant values tightly concentrated around 1, while \pairthree{} exhibits the largest spread and thus the poorest volume preservation.
Figure~\ref{fig:stre2d_jacs} also provides insights into why \pairthree{} fails under large deformations: its projected Jacobian determinant contains a significant number of negative values. In contrast, \pairone{} shows no negative Jacobian determinants, and \pairtwo{} only has one negative value, making these two \ac{ddfem} pairs more reliable than \ac{csfem} pairs for this 2D stretching problem.

\begin{figure}[hbt!]
  \centering
  \includegraphics[width=0.9\textwidth]{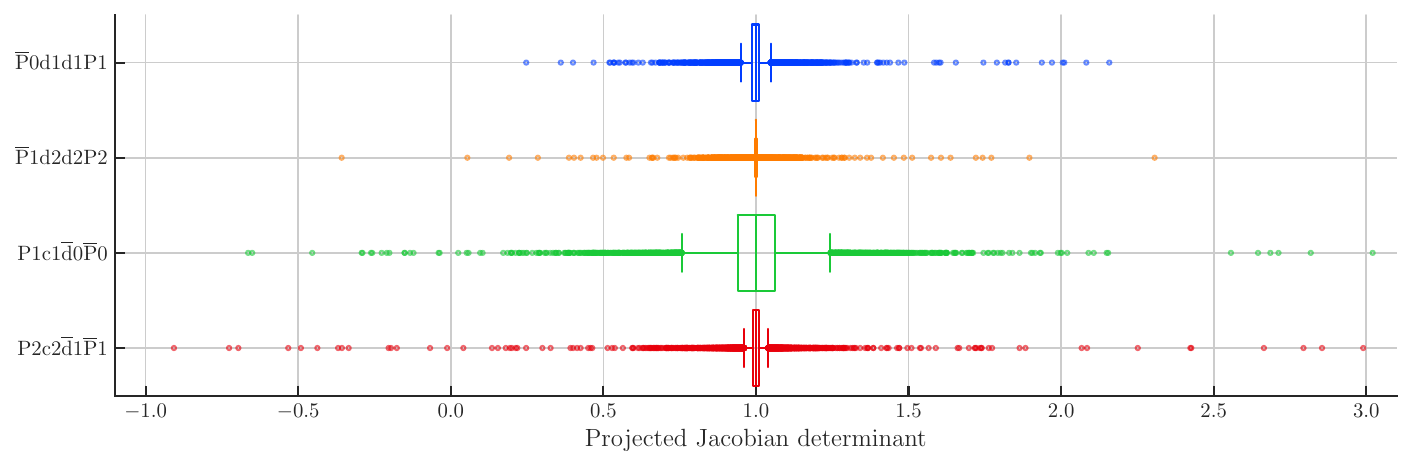}
  \caption{Box plot of the projected Jacobian determinant values for the 2D stretching problem. The displacement boundary data $u=1.5$. The Delaunay mesh consists of 3,344 elements.}
  \label{fig:stre2d_jacs}
\end{figure}

The \ac{fe} solutions for the 3D stretching problem solved by \pairone{}(corr) and \pairfive{} are displayed in the top and bottom rows of \fig{fig:stre3d_results}, respectively. The Delaunay mesh consists of 4,131 tetrahedra, and the deformed geometry for \pairone{} is corrected as described in \sect{sec:fe:ddcorr}.
The deformed geometries and overall solution patterns are broadly similar for both pairs. However, pronounced checkerboarding appears in the stress and pressure fields of \pairfive{}, indicating again that the corresponding 3D stress and pressure \ac{fe} pair is insufficiently rich. In contrast, the \pairone{} stress and pressure solutions are much smoother.
On this mesh, \pairfive{} has 122,242 \acp{dof}, whereas \pairone{} has 166,135 \acp{dof}. Although \pairone{} requires around 36\% more \acp{dof}, it avoids numerical artefacts such as checkerboarding and yields more natural and visually appealing stress and pressure solutions.

\begin{figure}[hbt!]
  \centering
    \begin{subfigure}{0.32\textwidth}
    \centering
    \includegraphics[width=\textwidth]{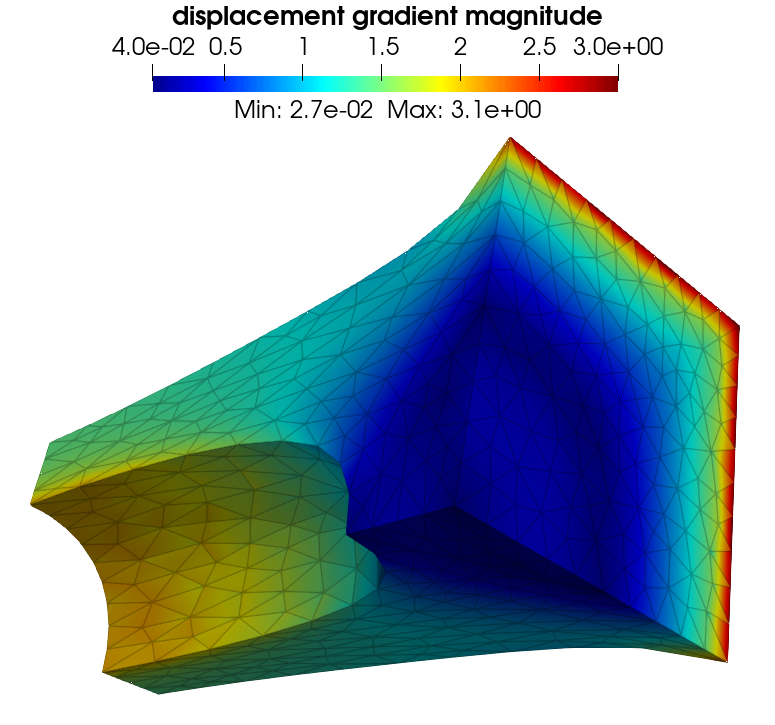}    
  \end{subfigure}
  \begin{subfigure}{0.32\textwidth}
    \centering
    \includegraphics[width=\textwidth]{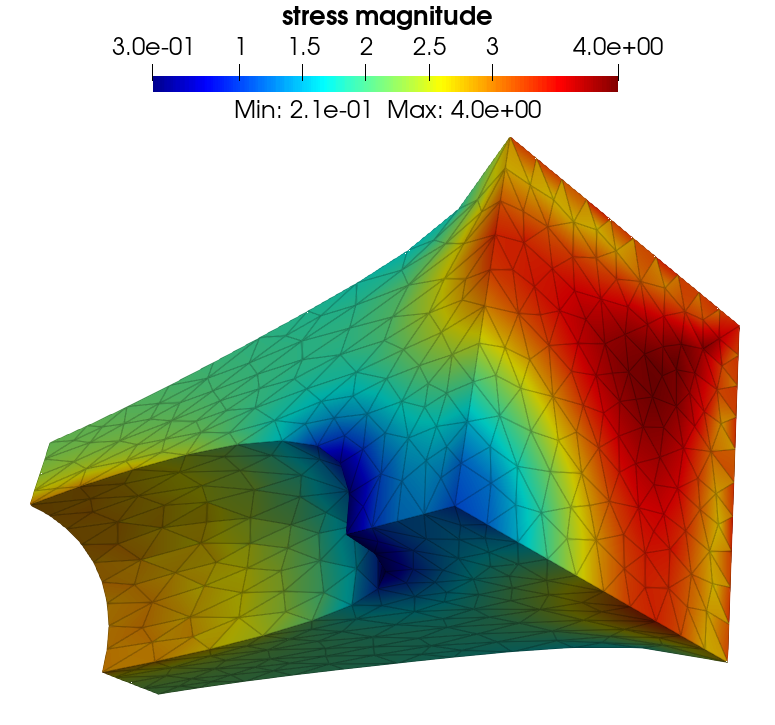}    
  \end{subfigure}
  \begin{subfigure}{0.32\textwidth}
    \centering
    \includegraphics[width=\textwidth]{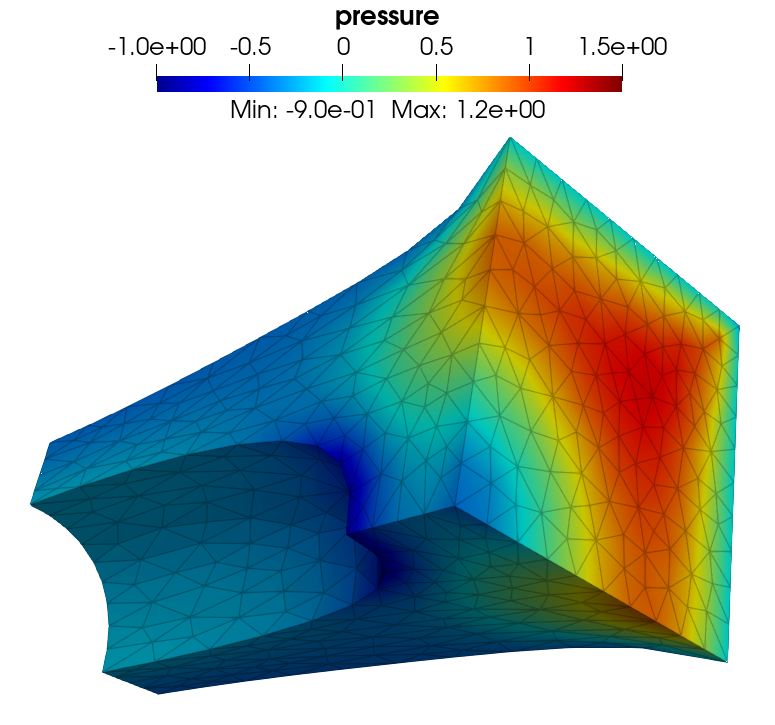}    
  \end{subfigure}
  
  \begin{subfigure}{0.32\textwidth}
    \centering
    \includegraphics[width=\textwidth]{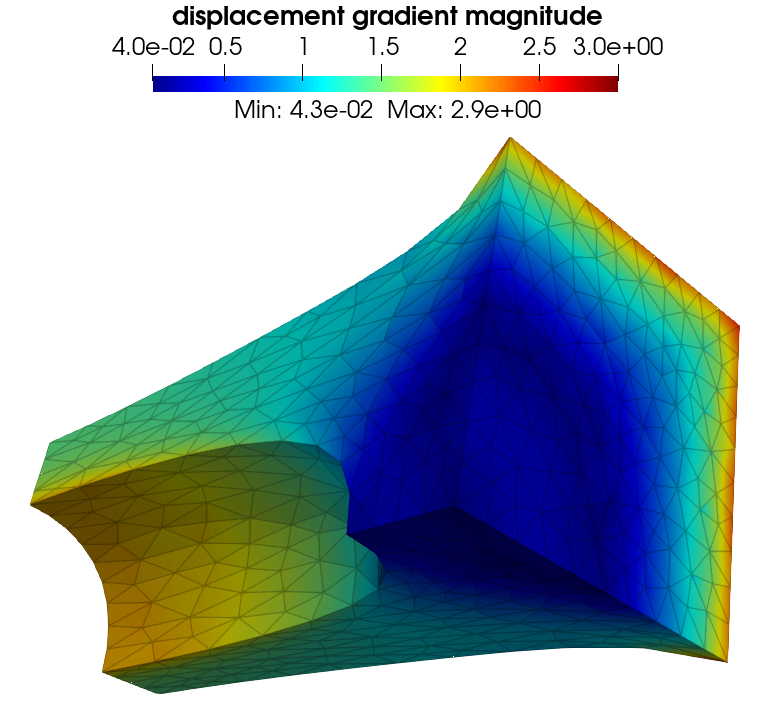}    
  \end{subfigure}
  \begin{subfigure}{0.32\textwidth}
    \centering
    \includegraphics[width=\textwidth]{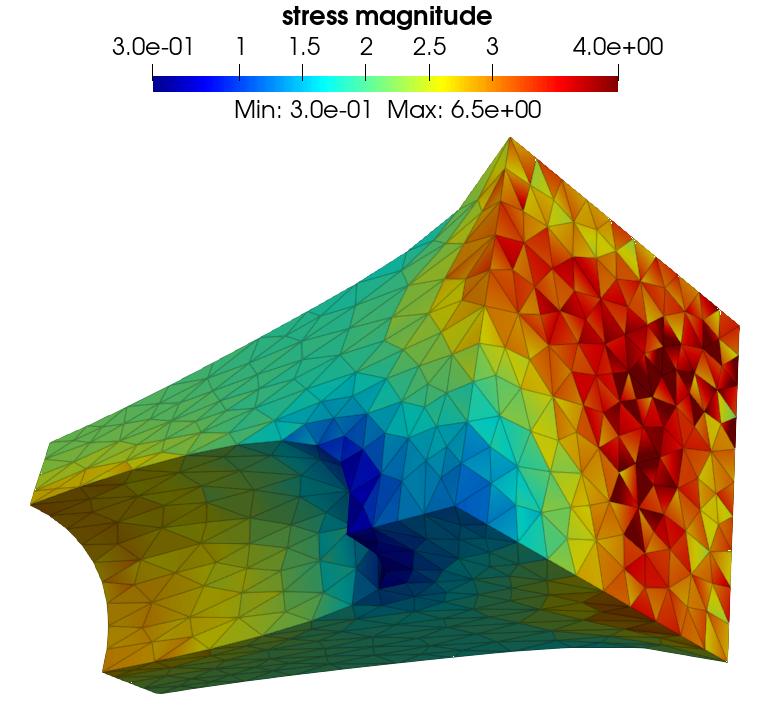}    
  \end{subfigure}
  \begin{subfigure}{0.32\textwidth}
    \centering
    \includegraphics[width=\textwidth]{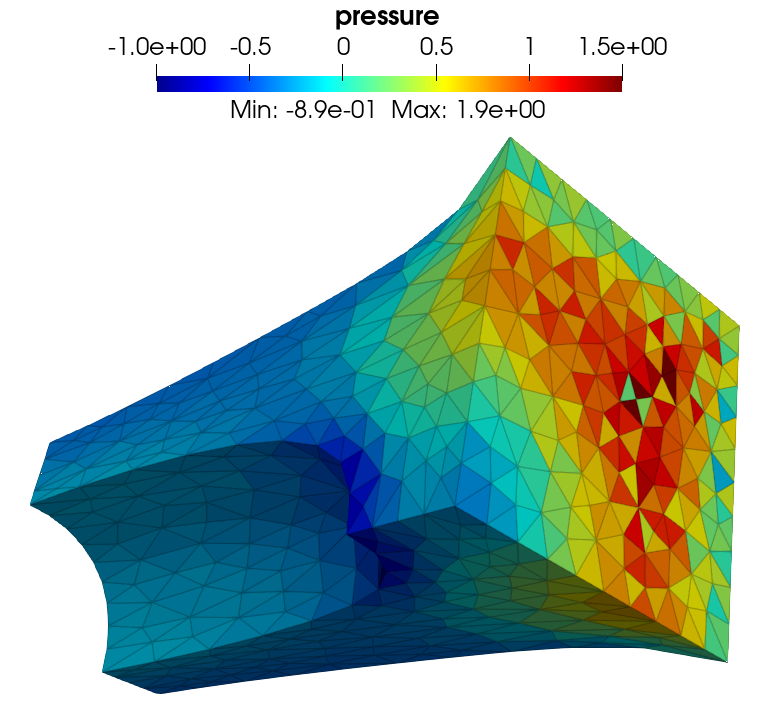}    
  \end{subfigure}
  
  \caption{\ac{fe} solutions obtained by different element pairs on the deformed 3D perforated block. The displacement boundary data $u=0.5$. Columns show (1) displacement gradient magnitude, (2) stress magnitude and (3) pressure. Top row: \pairone{}(corr); bottom row: \pairfive{}.}
  \label{fig:stre3d_results}
\end{figure}

To assess the consistency and accuracy of \acp{ddfem} and \acp{csfem} across different mesh resolutions, we plot the $L^2$ norms of the \ac{fe} solutions against the \acp{dof} in 2D and against element counts in 3D, as displayed in \fig{fig:stre_norms}.
For the 2D case, the two low-order pairs, \pairone{} and \pairthree{}, use the same set of meshes, while the other two high-order pairs, \pairtwo{} and \pairfour{}, share a different common set. 
All four pairs exhibit strong consistency in both the displacement and displacement gradient solutions, as reflected by their nearly flat curves. 
However, due to the use of order 0 Raviart--Thomas elements, the low-order \ac{csfem} pair \pairthree{} converges more slowly, while the other three pairs yield consistent stress results, as indicated by their flat stress curves.
For the pressure solution at small deformation ($u=0.5$), all four pairs converge, with \pairthree{} slower than \pairone{}, and \pairfour{} slower than \pairtwo{} in terms of \acp{dof}. Under large deformation ($u=1.5$), both \ac{csfem} pairs show poor convergences, and \pairfour{} fails on the finest mesh. This is not surprising, as negative Jacobian values already occur on the second finest mesh for the \pairfour{} solution, as shown in \fig{fig:stre2d_jacs}. 
For the two \ac{ddfem} pairs, the pressure solutions converge even under large deformations, with \pairone{} converging slightly faster than \pairtwo{}. 
It is also important to compare the \acp{dof} among low- and high-order pairs on the same mesh. For the displacement, \pairone{} has more \acp{dof} than \pairthree{} and \pairtwo{} possesses more than \pairfour{}. For the displacement gradient, the low-order pairs and high-order pairs have the same \acp{dof}, since \ac{bdm} elements can be constructed by rotating second-kind \nedelec{} elements of the same order. For the stress, \ac{bdm} elements are richer than Raviart-Thomas elements of one order lower, so \ac{ddfem} pairs have more \acp{dof}. For the pressure, \ac{ddfem} pairs have fewer \acp{dof}.
Overall, on the same 2D mesh, \pairone{} has around 30\% more \acp{dof} than \pairthree{}, and \pairtwo{} has roughly 18\% more than \pairfour{}.

\begin{figure}[hbt!]
  \centering
  \begin{subfigure}{\textwidth}
    \centering
    \includegraphics[width=\textwidth]{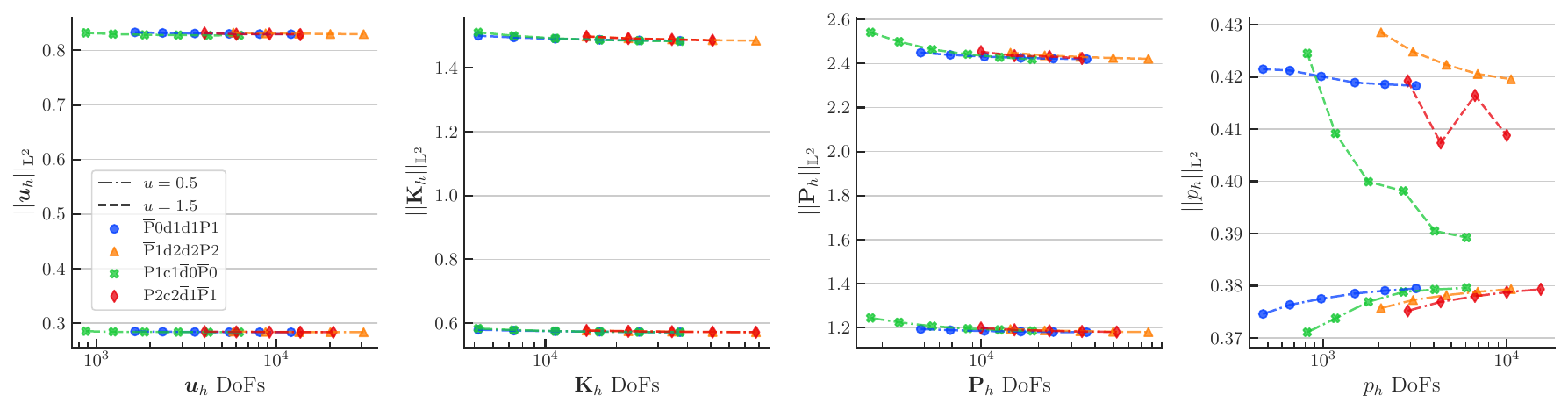}
    \caption{}
    \label{fig:stre2d_norms}
  \end{subfigure}
  \begin{subfigure}{\textwidth}
    \centering
    \includegraphics[width=\textwidth]{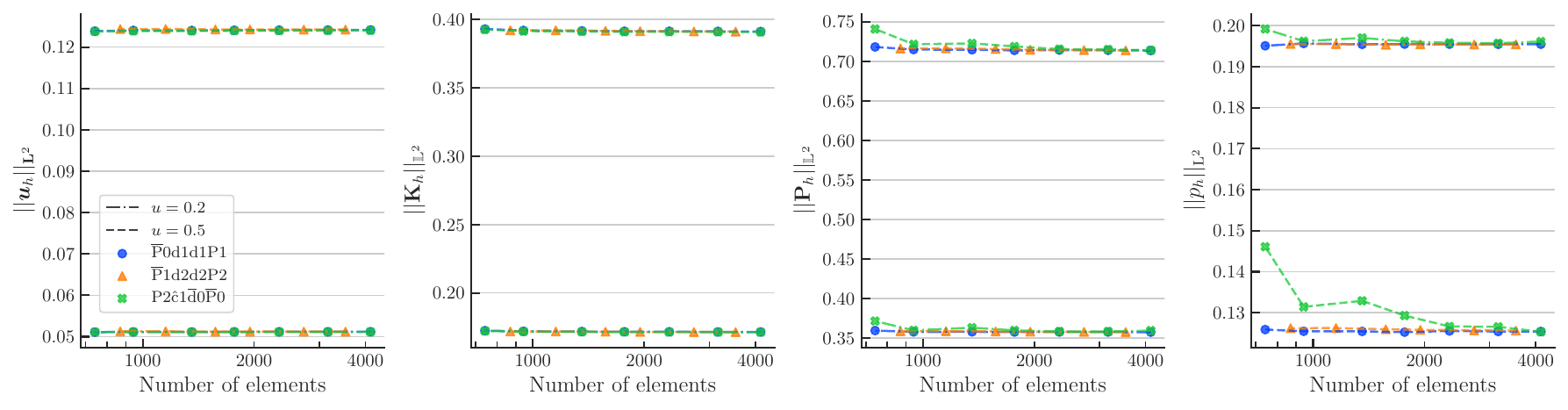}
    \caption{}
    \label{fig:stre3d_norms}
  \end{subfigure}
  \caption{$L^2$ norms of displacement, displacement gradient, stress and pressure versus number of elements for the 2D and 3D stretching problems under different displacement data $u$. Panels:  (a) 2D results, (b) 3D results.}
  \label{fig:stre_norms}
\end{figure}

In the $L^2$ norms for the 3D stretching cases shown in \fig{fig:stre3d_norms}, both \ac{ddfem} pairs exhibit nearly flat curves across all four fields, indicating accurate and stable \ac{fe} solutions over different mesh resolutions. 
The 3D \ac{csfem} pair \pairfive{} achieves fast convergence for displacement and displacement gradient solutions; nonetheless, its stress and pressure solutions converge slowly due to the use of the insufficiently rich lowest-order Raviart--Thomas \acp{fe}. For large deformations ($u=0.5$), the pressure convergence deteriorates further.
Overall, on sufficiently fine meshes, the \ac{fe} solutions from all three pairs are in good agreement.

\section{Concluding remarks} \label{sec:conclu}
The proposed four-field mixed formulation with discontinuous displacement offers an accurate, robust and scalable alternative for the simulation of incompressible nonlinear solids. 
Its stabilisation-free performance in 3D and compatibility with standard \ac{fe} spaces make it particularly attractive for large-deformation applications. 
Future work will focus on extending the formulation to aggregated \ac{fe} spaces~\cite{Badia2018} to better handle complex geometries, exploring the benefits of integrating neural networks~\cite{Badia2024,Badia2025} in such strongly nonlinear regimes, and combining the method with the active strain or active stress approaches~\cite{giantesio2025modeling} to address more challenging couplings of multiphysics and multiscale problems, such as cardiac electromechanics, tumour growth, brain tissue characterisation, or gastric function simulations.

\bigskip 
\noindent\textbf{Acknowledgments.} 
This research was partially funded by the Australian Government through the Australian Research Council (project number DP220103160), by the Institut Mittag--Leffler in Djursholm Sweden, under the research program \emph{Interfaces and unfitted discretization methods} (grant no. 2021-06594 from the Swedish Research Council), and by the Centre of Advanced Study (CAS) at the Norwegian Academy of Science and Letters under the program \emph{Mathematical Challenges in Brain Mechanics}. Computational resources were provided by the Australian Government through NCI under the NCMAS Merit Allocation Schemes.

\bigskip\noindent\textbf{Reproducibility.} 
The Gridap routines employed to generate the numerical results in the paper can be accessed from \url{https://github.com/wei3li/four-field-elasticity}. 

\printbibliography

@inbook{Hellinger1907,
  author    = {Hellinger, E.},
  editor    = {Klein, Felix and M{\"u}ller, Conr.},
  title     = {Die Allgemeinen Ans{\"a}tze der Mechanik der Kontinua},
  booktitle = {Mechanik},
  year      = {1907},
  publisher = {Vieweg+Teubner Verlag},
  address   = {Wiesbaden},
  pages     = {601--694},
  isbn      = {978-3-663-16028-1},
  doi       = {10.1007/978-3-663-16028-1_9},
  url       = {https://doi.org/10.1007/978-3-663-16028-1_9}
}

@article{Reissner1950,
  author  = {Reissner, Eric},
  title   = {On a Variational Theorem in Elasticity},
  journal = {Journal of Mathematics and Physics},
  volume  = {29},
  number  = {1-4},
  pages   = {90-95},
  doi     = {10.1002/sapm195029190},
  url     = {https://onlinelibrary.wiley.com/doi/abs/10.1002/sapm195029190},
  eprint  = {https://onlinelibrary.wiley.com/doi/pdf/10.1002/sapm195029190},
  year    = {1950}
}

@article{Viebahn2018,
  title    = {A simple and efficient Hellinger--Reissner type mixed finite element for nearly incompressible elasticity},
  journal  = {Computer Methods in Applied Mechanics and Engineering},
  volume   = {340},
  pages    = {278-295},
  year     = {2018},
  issn     = {0045-7825},
  doi      = {10.1016/j.cma.2018.06.001},
  url      = {https://www.sciencedirect.com/science/article/pii/S0045782518302901},
  author   = {Nils Viebahn and Karl Steeger and Jörg Schröder}
}

@article{Farrell2021,
  title    = {Mixed Kirchhoff stress--displacement--pressure formulations for incompressible hyperelasticity},
  journal  = {Computer Methods in Applied Mechanics and Engineering},
  volume   = {374},
  pages    = {113562},
  year     = {2021},
  issn     = {0045-7825},
  doi      = {10.1016/j.cma.2020.113562},
  url      = {https://www.sciencedirect.com/science/article/pii/S0045782520307477},
  author   = {Patrick E. Farrell and Luis F. Gatica and Bishnu P. Lamichhane and Ricardo Oyarzúa and Ricardo Ruiz-Baier}
}

@article{Stenberg1991,
  author  = {Stenberg, Rolf},
  title   = {Postprocessing schemes for some mixed finite elements},
  doi     = {10.1051/m2an/1991250101511},
  url     = {https://doi.org/10.1051/m2an/1991250101511},
  journal = {ESAIM: M2AN},
  year    = 1991,
  volume  = 25,
  number  = 1,
  pages   = {151-167}
}

@article{giantesio2025modeling,
  title     = {On the modeling of active deformation in biological transversely isotropic materials},
  author    = {Giantesio, Giulia and Musesti, Alessandro},
  journal   = {Journal of Elasticity},
  volume    = {157},
  number    = {1},
  pages     = {10},
  year      = {2025},
  publisher = {Springer}
}

@book{Ern04,
  author    = {Ern, A. and Guermond, J.-L.},
  title     = {Theory and Practice of Finite Elements},
  publisher = {Springer},
  series    = {Appl. Math. Sci.},
  volume    = {159},
  year      = {2004},
  address   = {New York}
}

@article{henke2026electromechanical,
  title     = {Electromechanical computational model of the human stomach},
  author    = {Henke, Maire S and Brandstaeter, Sebastian and Fuchs, Sebastian L and Aydin, Roland C and Gizzi, Alessio and Cyron, Christian J},
  journal   = {Computer Methods in Applied Mechanics and Engineering},
  volume    = {449},
  pages     = {118549},
  year      = {2026},
  publisher = {Elsevier}
}

@article{Shojaei2018,
  title    = {Compatible-strain mixed finite element methods for incompressible nonlinear elasticity},
  journal  = {Journal of Computational Physics},
  volume   = {361},
  pages    = {247-279},
  year     = {2018},
  issn     = {0021-9991},
  doi      = {10.1016/j.jcp.2018.01.053},
  url      = {https://www.sciencedirect.com/science/article/pii/S0021999118300755},
  author   = {Mostafa {Faghih Shojaei} and Arash Yavari}
}

@article{Shojaei2019,
  title    = {Compatible-strain mixed finite element methods for 3D compressible and incompressible nonlinear elasticity},
  journal  = {Computer Methods in Applied Mechanics and Engineering},
  volume   = {357},
  pages    = {112610},
  year     = {2019},
  issn     = {0045-7825},
  doi      = {10.1016/j.cma.2019.112610},
  url      = {https://www.sciencedirect.com/science/article/pii/S0045782519304864},
  author   = {Mostafa {Faghih Shojaei} and Arash Yavari}
}

@article{Fu2025,
  title    = {A four-field mixed formulation for incompressible finite elasticity},
  journal  = {Computer Methods in Applied Mechanics and Engineering},
  volume   = {444},
  pages    = {118082},
  year     = {2025},
  issn     = {0045-7825},
  doi      = {10.1016/j.cma.2025.118082},
  url      = {https://www.sciencedirect.com/science/article/pii/S0045782525003548},
  author   = {Guosheng Fu and Michael Neunteufel and Joachim Sch{\"o}berl and Adam Zdunek}
}

@article{Badia2020,
  doi       = {10.21105/joss.02520},
  url       = {https://doi.org/10.21105/joss.02520},
  year      = {2020},
  publisher = {The Open Journal},
  volume    = {5},
  number    = {52},
  pages     = {2520},
  author    = {Santiago Badia and Francesc Verdugo},
  title     = {Gridap: An extensible Finite Element toolbox in Julia},
  journal   = {Journal of Open Source Software}
}

@article{Verdugo2022,
  doi       = {10.1016/j.cpc.2022.108341},
  url       = {https://doi.org/10.1016/j.cpc.2022.108341},
  year      = {2022},
  month     = jul,
  publisher = {Elsevier {BV}},
  volume    = {276},
  pages     = {108341},
  author    = {Francesc Verdugo and Santiago Badia},
  title     = {The software design of Gridap: A Finite Element package based on the Julia {JIT} compiler},
  journal   = {Computer Physics Communications}
}

@article{Alvarez2015,
  author  = {Alvarez, Mario and Gatica, Gabriel N. and Ruiz-Baier, Ricardo},
  title   = {An augmented mixed-primal finite element method   for a coupled flow-transport problem},
  doi     = {10.1051/m2an/2015015},
  url     = {https://doi.org/10.1051/m2an/2015015},
  journal = {ESAIM: M2AN},
  year    = 2015,
  volume  = 49,
  number  = 5,
  pages   = {1399-1427},
  month   = aug
}

@article{Rossi2014,
  title    = {Thermodynamically consistent orthotropic activation model capturing ventricular systolic wall thickening in cardiac electromechanics},
  journal  = {European Journal of Mechanics - A/Solids},
  volume   = {48},
  pages    = {129-142},
  year     = {2014},
  issn     = {0997-7538},
  doi      = {10.1016/j.euromechsol.2013.10.009},
  url      = {https://www.sciencedirect.com/science/article/pii/S0997753813001228},
  author   = {Simone Rossi and Toni Lassila and Ricardo Ruiz-Baier and Adélia Sequeira and Alfio Quarteroni}
}

@book{Treloar2005,
  author    = {Treloar, G. L. R.},
  title     = {The Physics of Rubber Elasticity},
  publisher = {Oxford University Press},
  year      = {2005},
  month     = {10},
  isbn      = {9780198570271},
  doi       = {10.1093/oso/9780198570271.001.0001},
  url       = {https://doi.org/10.1093/oso/9780198570271.001.0001}
}

@book{Bonet2008,
  place     = {Cambridge},
  edition   = {2},
  title     = {Nonlinear Continuum Mechanics for Finite Element Analysis},
  publisher = {Cambridge University Press},
  author    = {Bonet, Javier and Wood, Richard D.},
  year      = {2008}
}

@article{Badia2016,
  author  = {Badia, Santiago and Mart\'{\i}n, Alberto F. and Principe, Javier},
  title   = {Multilevel Balancing Domain Decomposition at Extreme Scales},
  journal = {SIAM Journal on Scientific Computing},
  volume  = {38},
  number  = {1},
  pages   = {C22-C52},
  year    = {2016},
  doi     = {10.1137/15M1013511},
  url     = {https://doi.org/10.1137/15M1013511},
  eprint  = {https://doi.org/10.1137/15M1013511}
}

@book{Zienkiewicz2005,
  title     = {The Finite Element Method for Solid and Structural Mechanics},
  author    = {Zienkiewicz, O. C. and Taylor, R. L.},
  isbn      = {9780080455587},
  lccn      = {2006272263},
  url       = {https://books.google.com.au/books?id=VvpU3zssDOwC},
  year      = {2005},
  publisher = {Butterworth-Heinemann}
}

@book{Hughes2000,
  author    = {Hughes, Thomas J. R.},
  title     = {The Finite Element Method: Linear Static and Dynamic Finite Element Analysis},
  publisher = {Dover Publications},
  year      = {2000},
  address   = {Mineola, NY},
  isbn      = {9780486411811},
  series    = {Dover Civil and Mechanical Engineering}
}

@article{Arnold1990,
  title   = {Mixed finite element methods for elliptic problems},
  journal = {Computer Methods in Applied Mechanics and Engineering},
  volume  = {82},
  number  = {1},
  pages   = {281-300},
  year    = {1990},
  issn    = {0045-7825},
  doi     = {10.1016/0045-7825(90)90168-L},
  url     = {https://www.sciencedirect.com/science/article/pii/004578259090168L},
  author  = {Douglas N. Arnold}
}

@article{Taylor1973,
  title   = {A numerical solution of the Navier-Stokes equations using the finite element technique},
  journal = {Computers \& Fluids},
  volume  = {1},
  number  = {1},
  pages   = {73-100},
  year    = {1973},
  issn    = {0045-7930},
  doi     = {10.1016/0045-7930(73)90027-3},
  url     = {https://www.sciencedirect.com/science/article/pii/0045793073900273},
  author  = {C. Taylor and P. Hood}
}

@article{Crouzeix1973,
  author  = {Crouzeix, Michel and Raviart, Pierre-Arnaud},
  title   = {Conforming and nonconforming finite element methods for solving the stationary Stokes equations I},
  doi     = {10.1051/m2an/197307R300331},
  url     = {https://doi.org/10.1051/m2an/197307R300331},
  journal = {Revue fran{\c{c}}aise d'automatique informatique recherche op{\'e}rationnelle. Math{\'e}matique},
  year    = 1973,
  volume  = 7,
  number  = R3,
  pages   = {33-75}
}

@article{Arnold1984,
  title   = {A stable finite element for the stokes equations},
  author  = {Arnold, D. N. and Brezzi, F. and Fortin, M.},
  doi     = {10.1007/BF02576171},
  journal = {Calcolo},
  volume  = 21,
  number  = 4,
  pages   = {337--344},
  month   = dec,
  year    = 1984
}

@article{Auricchio2013,
  title   = {Approximation of incompressible large deformation elastic
             problems: some unresolved issues},
  author  = {Auricchio, Ferdinando and Beir{\~a}o da Veiga, Louren{\c c}o and Lovadina, Carlo and Reali, Alessandro and Taylor, Robert L and Wriggers, Peter},
  journal = {Computational Mechanics},
  doi     = {10.1007/s00466-013-0869-0},
  volume  = 52,
  number  = 5,
  pages   = {1153--1167},
  month   = nov,
  year    = 2013
}

@article{Hu1954,
  title   = {On some variational principles in the theory of elasticity and the theory of plasticity},
  author  = {{Hu, Hai-Chang}},
  journal = {Acta Physica Sinica},
  volume  = 10,
  number  = 3,
  pages   = {259--290},
  year    = 1954,
  doi     = {10.7498/aps.10.259}
}

@book{Washizu1955,
  title     = {On the variational principles of elasticity and plasticity},
  author    = {Washizu, K.},
  series    = {ASRL TR},
  url       = {https://books.google.com.au/books?id=NgsWOAAACAAJ},
  year      = {1955},
  publisher = {M.I.T. Aeroelastic and Structures Research Laboratory}
}

@article{Simo1985,
  title   = {Variational and projection methods for the volume constraint in finite deformation elasto-plasticity},
  author  = {Juan C. Simo and Robert L. Taylor and Kristofer S. Pister},
  journal = {Computer Methods in Applied Mechanics and Engineering},
  volume  = {51},
  number  = {1},
  pages   = {177-208},
  year    = {1985},
  issn    = {0045-7825},
  doi     = {10.1016/0045-7825(85)90033-7},
  url     = {https://www.sciencedirect.com/science/article/pii/0045782585900337}
}

@article{Simo1992,
  author  = {Simo, Juan C. and Armero, Francisco},
  title   = {Geometrically non-linear enhanced strain mixed methods and the method of incompatible modes},
  journal = {International Journal for Numerical Methods in Engineering},
  volume  = {33},
  number  = {7},
  pages   = {1413-1449},
  doi     = {10.1002/nme.1620330705},
  url     = {https://onlinelibrary.wiley.com/doi/abs/10.1002/nme.1620330705},
  eprint  = {https://onlinelibrary.wiley.com/doi/pdf/10.1002/nme.1620330705},
  year    = {1992}
}

@article{Angoshtari2016,
  title   = {Hilbert complexes of nonlinear elasticity},
  author  = {Angoshtari, Arzhang and Yavari, Arash},
  journal = {Zeitschrift f{\"u}r angewandte Mathematik und Physik},
  volume  = 67,
  doi     = {10.1007/s00033-016-0735-y},
  number  = 6,
  pages   = {143},
  month   = nov,
  year    = 2016
}

@article{Gopalakrishnan2019,
  author  = {Gopalakrishnan, Jay and Lederer, Philip L and Sch{\"o}berl, Joachim},
  title   = {A mass conserving mixed stress formulation for the Stokes equations},
  journal = {IMA Journal of Numerical Analysis},
  volume  = {40},
  number  = {3},
  pages   = {1838-1874},
  year    = {2019},
  month   = {05},
  issn    = {0272-4979},
  doi     = {10.1093/imanum/drz022},
  url     = {https://doi.org/10.1093/imanum/drz022},
  eprint  = {https://academic.oup.com/imajna/article-pdf/40/3/1838/33489419/drz022.pdf}
}

@article{Lamichhane2009,
  author   = {Lamichhane, Bishnu P.},
  title    = {A mixed finite element method for non-linear and nearly incompressible elasticity based on biorthogonal systems},
  journal  = {International Journal for Numerical Methods in Engineering},
  volume   = {79},
  number   = {7},
  pages    = {870-886},
  doi      = {10.1002/nme.2594},
  url      = {https://onlinelibrary.wiley.com/doi/abs/10.1002/nme.2594},
  eprint   = {https://onlinelibrary.wiley.com/doi/pdf/10.1002/nme.2594},
  year     = {2009}
}

@article{Liu2014,
  title    = {A nonlinear finite element model for the stress analysis of soft solids with a growing mass},
  journal  = {International Journal of Solids and Structures},
  volume   = {51},
  number   = {17},
  pages    = {2964-2978},
  year     = {2014},
  issn     = {0020-7683},
  doi      = {10.1016/j.ijsolstr.2014.04.010},
  url      = {https://www.sciencedirect.com/science/article/pii/S0020768314001590},
  author   = {Yin Liu and Hongwu Zhang and Yonggang Zheng and Sheng Zhang and Biaosong Chen}
}

@book{Holzapfel2001,
  title     = {Nonlinear Solid Mechanics. A Continuum Approach for Engineering},
  author    = {Gerhard A. Holzapfel},
  year      = {2001},
  language  = {English},
  publisher = {John Wiley \& Sons},
  edition   = {second print}
}

@book{Brezzi1991,
  author    = {Brezzi, Franco and Fortin, Michel},
  editor    = {Brezzi, Franco and Fortin, Michel},
  title     = {Mixed and Hybrid Finite Element Methods},
  year      = {1991},
  publisher = {Springer New York},
  address   = {New York, NY},
  isbn      = {978-1-4612-3172-1},
  doi       = {10.1007/978-1-4612-3172-1},
  url       = {https://doi.org/10.1007/978-1-4612-3172-1}
}

@article{Gatica2003,
  author  = {Gatica, Gabriel N. and Heuer, Norbert and Meddahi, Salim},
  title   = {On the numerical analysis of nonlinear twofold saddle point problems},
  journal = {IMA Journal of Numerical Analysis},
  volume  = {23},
  number  = {2},
  pages   = {301-330},
  year    = {2003},
  month   = apr,
  issn    = {0272-4979},
  doi     = {10.1093/imanum/23.2.301},
  url     = {https://doi.org/10.1093/imanum/23.2.301},
  eprint  = {https://academic.oup.com/imajna/article-pdf/23/2/301/1997412/230301.pdf}
}

@article{Howell2011,
  author  = {Howell, Jason S. and Walkington, Noel J.},
  title   = {Inf--sup conditions for twofold saddle point problems},
  journal = {Numerische Mathematik},
  year    = {2011},
  month   = aug,
  day     = {01},
  volume  = {118},
  number  = {4},
  pages   = {663-693},
  issn    = {0945-3245},
  doi     = {10.1007/s00211-011-0372-5},
  url     = {https://doi.org/10.1007/s00211-011-0372-5}
}

@article{Gatica2004,
  title   = {A low-order mixed finite element method for a class of quasi-Newtonian Stokes flows. Part I: a priori error analysis},
  journal = {Computer Methods in Applied Mechanics and Engineering},
  volume  = {193},
  number  = {9},
  pages   = {881-892},
  year    = 2004,
  issn    = {0045-7825},
  doi     = {10.1016/j.cma.2003.11.007},
  url     = {https://www.sciencedirect.com/science/article/pii/S0045782503005929},
  author  = {Gabriel N. Gatica and Mar{\'i}a Gonz{\'a}lez and Salim Meddahi}
}

@article{Badia2018,
  title    = {The aggregated unfitted finite element method for elliptic problems},
  journal  = {Computer Methods in Applied Mechanics and Engineering},
  volume   = {336},
  pages    = {533-553},
  year     = {2018},
  issn     = {0045-7825},
  doi      = {10.1016/j.cma.2018.03.022},
  url      = {https://www.sciencedirect.com/science/article/pii/S0045782518301476},
  author   = {Santiago Badia and Francesc Verdugo and Alberto F. Mart\'{\i}n}
}

@article{Badia2024,
  title    = {Finite element interpolated neural networks for solving forward and inverse problems},
  journal  = {Computer Methods in Applied Mechanics and Engineering},
  volume   = {418},
  pages    = {116505},
  year     = {2024},
  issn     = {0045-7825},
  doi      = {10.1016/j.cma.2023.116505},
  url      = {https://www.sciencedirect.com/science/article/pii/S0045782523006291},
  author   = {Santiago Badia and Wei Li and Alberto F. Mart\'{\i}n}
}

@article{Badia2025,
  title    = {Compatible finite element interpolated neural networks},
  journal  = {Computer Methods in Applied Mechanics and Engineering},
  volume   = {439},
  pages    = {117889},
  year     = {2025},
  issn     = {0045-7825},
  doi      = {10.1016/j.cma.2025.117889},
  url      = {https://www.sciencedirect.com/science/article/pii/S0045782525001616},
  author   = {Santiago Badia and Wei Li and Alberto F. Mart\'{\i}n}
}

\end{document}